\newtheorem{theorem}{Theorem}
\numberwithin{theorem}{section}
\newtheorem{lemma}[theorem]{Lemma}
\theoremstyle{definition}
\newtheorem{definition}[theorem]{Definition}
\newtheorem{remark}[theorem]{Remark}
\newtheorem{example}[theorem]{Example}
\newtheorem{proposition}[theorem]{Proposition}
\newtheorem{corollary}[theorem]{Corollary}
\newtheorem{notation}[theorem]{Notation}
\newcommand{\emr}[1]{\textcolor{black}{#1}}
\renewenvironment{proof}[1][\unskip]{
\par\noindent{\it Proof #1. }} { \mbox{}\hfill
$\blacksquare$ \par }
\strutbox\hbox{\vrule
width 0pt height 6.4pt depth 1.6pt}
\renewcommand{\@oddfoot}{\hfill ---~\thepage~--- \hfill}
\def\ds@whichfont{dsrom}
\DeclareMathAlphabet{\mathds}{U}{\ds@whichfont}{m}{n}
\newcommand\const{\mathord{\rm const}}
\newcommand\supp{\mathop{\rm supp}\nolimits }
\newcommand\eqdef{\buildrel{\rm def}\over=}
\newcommand{\bbR}{\mathbb{R}}
\newcommand{\gro}{G_1}
\newcommand{\gfi}{G_2}
\mathchardef\R"71B3 \mathchardef\Z"71B4 \mathchardef\C"71B2
\mathchardef\Q"71B1 \mathchardef\N"71B0 \mathchardef\k"71B9
\renewcommand\matrix[1]{\null\,\vcenter{\fns\normalbaselines\mathsurround0pt
    \ialign{\hfil$##$\hfil&&\enspace\hfil$##$\hfil\crcr
      \mathstrut\crcr\noalign{\kern-\baselineskip}
      #1\crcr\mathstrut\crcr\noalign{\kern-\baselineskip}}}\,}
\newcommand\dfill{\cleaders\hbox to 10pt{\hss.\hss}\hfill }
\newcommand\LR[3]{\setbox0\hbox{$#3$}\setbox1\hbox{$\left#1\vcenter{\copy0}
\right#2$}\dimen200\ht1\advance\dimen200 by -\ht0
\dimen201\ht1\advance\dimen201 by \dp1
\mathord{\mathopen{\lower\dimen200\hbox{$\left#1\vcenter to
\dimen201{}\right.$}}\copy0\mathclose{\lower\dimen200
\hbox{$\left#2\vcenter to \dimen201{}\right.$}}}}
\newcommand{\eps}{\varepsilon}
\newcommand{\Ds}{\mathcal{D}^{(2)}}
\newcommand{\Da}{\mathcal{D}^{(1)}}
\newcommand{\mS}{\mathcal{S}}
\newcommand\hatop[1]{ {\widehat #1}_\vartheta}
\begin{document}
\bibliographystyle{plain} 
\date{ }

\title{Uniform lower bounds on the dimension of Bernoulli convolutions}

\author{V. Kleptsyn, M. Pollicott and P. Vytnova\thanks{
The authors are very grateful to the anonymous referee for his helpful suggestions and comments. 
The first author is partly supported by ANR Gromeov (ANR-19-CE40-0007), by Centre
Henri Lebesgue (ANR-11-LABX-0020-01) and by the Laboratory of Dynamical Systems
and Applications NRU HSE, of the Ministry of science and higher education of the
RF grant ag. No. 075-15-2019-1931. The second author is partly
supported by ERC-Advanced Grant 833802-Resonances and EPSRC grant
EP/T001674/1. 
The third author is partly supported by
EPSRC grant EP/T001674/1. }}

\maketitle

\begin{abstract}
    In this note we present an algorithm to obtain a uniform lower bound on Hausdorff
    dimension of the stationary measure of an affine iterated function
    scheme with similarities, the best known example of which is Bernoulli
    convolution. The Bernoulli convolution measure~$\mu_\lambda$ is the probability measure
    corresponding to the law of the random variable 
    $$
    \xi = \sum_{k=0}^\infty \xi_k\lambda^k, 
    $$
    where $\xi_k$ are i.i.d. random variables assuming values~$-1$ and~$1$ with
    equal probability and $\frac12 < \lambda < 1$.
    In particular, for Bernoulli convolutions we give a uniform lower bound
    $\dim_H(\mu_\lambda) \geq 0.96399$ for all $\frac12<\lambda<1$.
\end{abstract}

\tableofcontents

\section{Introduction}

In this note we will study stationary measures for certain types of iterated
function schemes. We begin with an important example.

\subsection{Bernoulli convolutions}

The study of the properties of Bernoulli convolutions  was greatly advanced  by
two influential papers of Paul Erd\"os from 1939 \cite{erdos1939} and 1940
\cite{erdos1940} and has remained an active area of research ever since.  We
briefly recall the definition: given $0 < \lambda <1$ we can associate the
Bernoulli convolution measure $\mu_\lambda$ on the real line 
corresponding to the distribution of the series
\begin{equation}
    \label{eq:rv}
    \xi = \sum_{k=0}^\infty \xi_k\lambda^k
\end{equation}
where $(\xi_k)_{k=1}^\infty$ are independent random variables assuming values
$\pm 1$ with equal probability.
Equivalently, this is the probability measure given by the weak-star limit  of the measures 
$$
\mu_\lambda  = \lim_{n\to +\infty} \frac{1}{2^n} \sum_{i_1, \cdots, i_n \in
\{0,1\}} \delta\left(\sum_{j=1}^n (-1)^{i_j} \lambda^j\right), 
$$
where $\delta(y)$ is the Dirac delta probability measure supported on $y$.
The properties of these measures have been studied in great detail. 
We refer the reader to recent surveys by Gou\"ezel~\cite{G18} and
Hochman~\cite{H18} for an overview of existing results. 

The properties of the measure $\mu_\lambda$ are very sensitive to the choice of $\lambda$.  
For example, if $0 < \lambda < \frac{1}{2}$ then $\mu_\lambda$ is supported on a
Cantor set and is singular with respect to Lebesgue measure, but if  $\lambda =
\frac{1}{2}$ then the measure $\mu_{1/2}$ equals  the normalized Lebesgue
measure on $[-2,2]$.
For  $\frac{1}{2} < \lambda < 1$ the situation is more subtle. In this case the measure
$\mu_\lambda$ is supported on the closed interval $\left[-\frac{1}{1-\lambda},
\frac{1}{1-\lambda}  \right]$.   
It was conjectured by Erd\"os in 1940~\cite{erdos1940}, and proved by Solomyak
in 1995~\cite{solomyak}, that for almost all $\lambda \in (\frac{1}{2},1)$ (with
respect to Lebesgue measure) the measure  $\mu_\lambda$ is absolutely
continuous. Recently Shmerkin~\cite{S14},~\cite{S15}, developing the method of
Hochman~\cite{hochman}, improved this result to show that   
the set of $\frac{1}{2} < \lambda < 1$ for which 
$\mu_\lambda$ is {\it not } absolutely continuous has zero Hausdorff dimension.
On the other hand, it  was shown  by Erd\"os in~\cite{erdos1939} that this
exceptional  set of values is non-empty. 

We will be concerned with another, though related, aspect of the Bernoulli
convolutions $\mu_\lambda$, namely their Hausdorff dimension. 
\begin{definition}
    \label{def:HDmeasure}
    The \emph{Hausdorff dimension} of a probability measure $\mu$ is defined by
    \begin{equation}\label{eq:dimHmu}
        \dim_H(\mu) : = \inf \{\dim_H(X) \mid \mbox{ $X$ is a Borel set with
        } \mu(X)=1\},
    \end{equation}
    where $\dim_H(X)$ stands for the Hausdorff dimension of a set $X$,
    see Section~\ref{subsection:dimH} for definition. 
\end{definition}
\noindent
Any measure $\mu$ which is  absolutely continuous with respect to Lebesgue
measure
automatically satisfies $\dim_H(\mu) =1$, and therefore the result of Shmerkin
implies that $\dim_H(\mu_\lambda)=1$ for all but an exceptional set of
parameters $\lambda$ of zero Hausdorff dimension.   
Furthermore, Varj\'u~\cite{V} recently proved a stronger result that
 $\dim_H(\mu_\lambda) =1$ for all transcendental $\lambda$.  
 
Therefore, it remains  to consider the set of algebraic parameter values.
It turned out that for certain class of algebraic numbers, namely, for
the reciprocals of Pisot numbers, it is possible to
compute Hausdorff dimension~$\dim_H(\mu_\lambda)$ explicitly, subject to
computer resources. 
We briefly recall the definition. 
\begin{definition}
Pisot number~$\beta$ is an algebraic number strictly greater than one all of
whose (Galois) conjugates, excluding itself, lie strictly inside the unit circle. 
\end{definition}
The Pisot numbers form a closed subset of $\mathbb R$ and have Hausdorff dimension strictly less than~$1$. 
The smallest Pisot number is $\beta_{min} = 1.3247\ldots$ (a root of $x^3 -
x - 1=0$).

The first progress on dimension of Bernoulli convolutions was made by Erd\"os in~\cite{erdos1939}, where he  
showed that if $\lambda$ is the reciprocal of a Pisot number, then  
$\mu_\lambda$ is not absolutely continuous. 
Garsia~\cite{G63} improved on the Erd\"os result by showing that
$\dim_H(\mu_\lambda) < 1$ whenever $\lambda$ is the reciprocal of a  Pisot
number. 
This phenomenon is called \emph{dimension drop} and it remains unknown whether Pisot numbers are the only numbers with this
property.

Alexander and Zagier estimated $\dim_H(\mu_\lambda)$ in the case that
$\lambda=\frac2{1+\sqrt5}$ was the reciprocal of the Golden mean and 
Grabner, Kirschenhofer and Tichy~\cite{GKT02} gave examples of explicit
algebraic numbers~$\lambda$, the so-called ``multinacci'' numbers for which the dimension
drop takes place. The values they computed are amongst the smallest known values
for the dimension of Bernoulli convolutions. For example, they estimated that 
\begin{align*}
\mbox{when } &\lambda^3 - \lambda^2 - \lambda - 1 = 0, && \mbox{ then
}\dim_H(\mu_\lambda) = 0.980409319534731\ldots \\
\mbox{when } &\lambda^4 - \lambda^3 - \lambda^2 -\lambda -1 = 0,  && \mbox{ then
} \dim_H(\mu_\lambda) = 0.986926474333800\ldots.
\end{align*}
The technique developed in~\cite{GKT02} has been subsequently extended to a
wider class of algebraic parameter values cf.~\cite{AFKP} and~\cite{HKPS19},  
however, the limitation of this method is that it requires studying each 
parameter value independently.

It is therefore a  basic  problem to get a \emph{uniform} lower bound on $\dim_H
\mu_\lambda$ for $\frac12<\lambda<1$ and to identify possible dimension drops.
A simplifying observation is that
\begin{equation}
    \label{eq:sqrt}
\dim_H(\mu_\lambda) \geq \dim_H(\mu_{\lambda^2})
\end{equation}
 and thus if suffices to get a lower bound for $\frac{1}{2}< \lambda <
 \frac{1}{\sqrt{2}}$. 
 \begin{remark}
     The inequality~\eqref{eq:sqrt} is established, in
 particular, in~\cite[Proposition 2.1]{HS18} for algebraic parameter
 values~$\lambda$, but it is easy to see that it holds for all $\frac12\le
 \lambda \le 1$, since for any probability measure $\mu$ we have that
 $\dim_H \mu*\mu \ge \dim_H \mu$. 
 \end{remark}
Our first main result on the dimension of Bernoulli convolutions is a collection of
piecewise-constant uniform lower bounds over increasingly finer partitions of
the parameter space.
\begin{theorem}\label{t:main} 
\begin{enumerate}
\item\label{i:1} The dimension of Bernoulli convolutions $\mu_\lambda$ for any
    $\frac12 < \lambda < 1$ satisfies 
\[
\dim_H \mu_{\lambda} \ge G_0 := 0.96399.
\] 
\item\label{i:2} Moreover, the dimension of Bernoulli convolutions $\mu_\lambda$
    is roughly bounded from below by a piecewise-constant function $\gro$ with $8$
    intervals of continuity, 
$\dim_H \mu_{\lambda} \ge \gro(\lambda)$, where the values of~$\gro$ are given
in Table~\ref{tab:g1-bernoulli} for $0.5 \le \lambda \le 0.8$. 
\item\label{i:3} The previous bound can be further refined. The dimension of Bernoulli convolutions $\mu_\lambda$
    is bounded from below by a piecewise-constant function $\gfi$ corresponding
    to approximately~$10000$ intervals
$\dim_H \mu_{\lambda} \ge \gfi(\lambda)$, where the graph of the function~$\gfi$
is presented in Figure~\ref{fig:plotBC}, with the particularly interesting
region $0.5 < \lambda < 0.575$ presented in Figure~\ref{fig:g2-bernoulli}.
\end{enumerate}
\end{theorem}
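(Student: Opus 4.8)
The three assertions are the output of a single certified computation carried out at three successive resolutions, so my plan is to describe that computation together with the continuity estimate that upgrades finitely many rigorous evaluations into a bound valid for every $\lambda$. To begin, I would record that $\mu_\lambda$ is exactly the stationary measure of the affine iterated function scheme given by $f_\pm(x)=\lambda x\pm1$ with equal weights $\tfrac12,\tfrac12$, so the general algorithm developed in the earlier sections applies without modification. Before invoking it I would apply the reduction~\eqref{eq:sqrt}: every $\lambda \in (\tfrac1{\sqrt2},1)$ has an iterate $\lambda^{2^n}$ lying in $(\tfrac12,\tfrac1{\sqrt2}]$ (the square of a number exceeding $\tfrac1{\sqrt2}$ exceeds $\tfrac12$, while the iterates decrease to $0$), so applying $\dim_H(\mu_\lambda)\ge\dim_H(\mu_{\lambda^2})$ repeatedly reduces the whole claim to certifying the per-parameter bound on the compact window $\tfrac12<\lambda<\tfrac1{\sqrt2}$.

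The only genuinely analytic ingredient is the per-parameter lower bound itself. Self-similar measures are exact dimensional (Feng--Hu), so $\dim_H(\mu_\lambda)$ is a single well-defined number equal to the almost-sure local dimension, and it suffices to bound this local dimension from below. The self-similarity relation $\mu_\lambda = \tfrac12\,\mu_\lambda\circ f_+^{-1} + \tfrac12\,\mu_\lambda\circ f_-^{-1}$ propagates the mass $\mu_\lambda(I)$ of a small interval to its two preimages, and iterating this turns the question into the asymptotics of a transfer operator that records how the two branches of the scheme overlap --- equivalently, how the difference walk $\sum_k \zeta_k\lambda^k$, $\zeta_k\in\{-1,0,1\}$, concentrates near the origin. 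I would certify a lower bound on the growth rate of this operator by exhibiting a finite-dimensional test density, adapted to a partition of the state space, that satisfies a one-sided (sub-solution) operator inequality; dividing the certified rate by the Lyapunov exponent $\log(1/\lambda)$ and truncating at $1$ then yields the bound on $\dim_H(\mu_\lambda)$. Crucially this argument is unconditional: it never invokes exponential separation, which is exactly what fails at the reciprocals of Pisot numbers, and it is this feature that makes a bound valid across the entire continuum of $\lambda$ possible.

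The hard part, and the real content of the theorem, is uniformity, since a rigorous computation can be run only at finitely many parameters while the claim quantifies over all $\lambda$. To bridge this gap I would prove an explicit modulus of continuity (a Lipschitz bound) for the certified quantity as a function of $\lambda$, so that a single interval-arithmetic evaluation over a sub-interval $[\lambda_0,\lambda_1]$ provably dominates the true bound at every interior point; all arithmetic is performed in interval form so that each verified inequality carries a rigorous error bound. Executing this over a partition of $(\tfrac12,\tfrac1{\sqrt2})$ and taking the minimum cell value gives $G_0=0.96399$ for part~\ref{i:1}; recording the eight cell values of a coarse partition gives the step function $\gro$ of part~\ref{i:2}; and repeating on a partition into roughly $10000$ cells gives the finer step function $\gfi$ of part~\ref{i:3}. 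I expect the delicate points to be making each partition fine enough that the continuity estimate does not erode the bound below the stated values, and controlling error propagation in the interval-arithmetic spectral estimate tightly enough that the certified numbers are provably correct to the claimed precision.
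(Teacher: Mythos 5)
Your overall architecture does match the paper's: reduce via~\eqref{eq:sqrt} to a compact parameter window, certify a per-cell lower bound by exhibiting a test function satisfying a one-sided (sub-solution) inequality for an operator built from the difference walk $\sum_k \zeta_k\lambda^k$, $\zeta_k\in\{-1,0,1\}$, and run the certification at several resolutions. However, three concrete points separate your plan from a proof. The most serious is that the bridge from the certified operator inequality to $\dim_H(\mu_\lambda)\ge\alpha$ is asserted rather than proved; this implication is exactly Theorem~\ref{t:certificate-D2}, and its proof is the paper's main analytic content: one shows that $w_n=\lambda^{-\alpha n}\psi(z_n)$ is a supermartingale for the backward difference process $z_n$, deduces $\mathbb P(z_n\in J)\le\theta^{-2}\lambda^{\alpha n}$, and converts this (Lemma~\ref{lem:pdim}) into finiteness of the energy integral $I(\mu_\lambda,\alpha')$ for all $\alpha'<\alpha$, whence $D_2(\mu_\lambda)\ge\alpha$ and $\dim_H(\mu_\lambda)\ge D_2(\mu_\lambda)$ by the potential principle. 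Your alternative packaging through exact dimensionality (Feng--Hu) plus a Markov/Borel--Cantelli argument on local dimensions could be made rigorous, but none of it is supplied, and it is not citable boilerplate: it is the theorem on which everything else rests. (Note also that the paper never needs exact dimensionality, which keeps its argument self-contained.)

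Two further gaps are more mechanical but would change the result you prove. First, restricting the computation to $\bigl(\frac12,\frac1{\sqrt2}\bigr)$ suffices for part~\ref{i:1} but not for parts~\ref{i:2} and~\ref{i:3} as stated: squaring maps $\bigl(\frac1{\sqrt2},0.8\bigr)$ into $\bigl(\frac12,0.64\bigr)$, which contains the dip $[0.5430,0.5451)$ (the tribonacci reciprocal) where the bound is only $0.9639$; so for $\lambda\in[0.7369,0.7384)$ your route yields roughly $0.9639$, far below the value $0.99$ claimed in Table~\ref{tab:g1-bernoulli} on $[0.5703,0.8)$. This is precisely why the paper computes directly on $[0.5,0.8]$ rather than exploiting~\eqref{eq:sqrt} to the fullest. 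Second, your uniformity mechanism --- a Lipschitz modulus in $\lambda$ for the certified quantity --- does not exist for the piecewise-constant test functions you propose: for fixed piecewise-constant $\psi$ the map $\lambda\mapsto\psi\bigl((x-c_i+c_j)/\lambda\bigr)$ jumps whenever the argument crosses a cell boundary. The paper sidesteps continuity entirely with the monotone dominating operator $\mathcal D_{\alpha,\Lambda,\mathcal J}$ of Theorem~\ref{thm:5star}, whose definition takes the supremum over $\lambda\in\Lambda$ and over each cell inside the operator, so that a single verified inequality dominates $\Ds_{\alpha,\mS}$ for every $\lambda\in\Lambda$ simultaneously. Your interval-arithmetic phrasing (enclosure of the operator over the parameter subinterval), if taken as the actual mechanism, effectively rebuilds this dominating operator, so this last gap is repairable --- but as written, Lipschitz continuity is the wrong tool.
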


In the proof, we derive~\ref{i:2} from~\ref{i:3} and~\ref{i:1}
    from~\ref{i:2}, rather than establishing each estimate independently. 
    We choose to give the statement in three parts for the clarity of
    exposition. 
\begin{remark}
Our proof is computer assisted and these bounds are not sharp, at least in the
following sense: using a finer partition of the parameter space one
could obtain even better lower bounds. This of course requires more computer
time. 
\end{remark}

\parbox[c][60mm][t]{70mm}{

\bigskip

\begin{tabular}{|c|c|}
  \hline
   Interval & $\gro$  \\
  \hline
    $[0.5000 , 0.5037 )$ &   $0.9900 $ \\
    $[0.5037 , 0.5181 )$ &   $0.9800 $ \\
    $[0.5181 , 0.5200 )$ &   $0.9700 $ \\
    $[0.5200 , 0.5430 )$ &   $0.9785 $ \\
    $[0.5430 , 0.5451 )$ &   $0.9639 $ \\
    $[0.5451 , 0.5527 )$ &   $0.9785 $ \\
    $[0.5527 , 0.5703 )$ &   $0.9850 $ \\
    $[0.5703 , 0.8000 )$ &   $0.9900 $ \\
  \hline
\end{tabular}
\captionof{table}{Values of $\gro$.}
\label{tab:g1-bernoulli}
}%
\parbox[c][60mm][t]{85mm}{
\includegraphics[width=85mm,height=50mm]{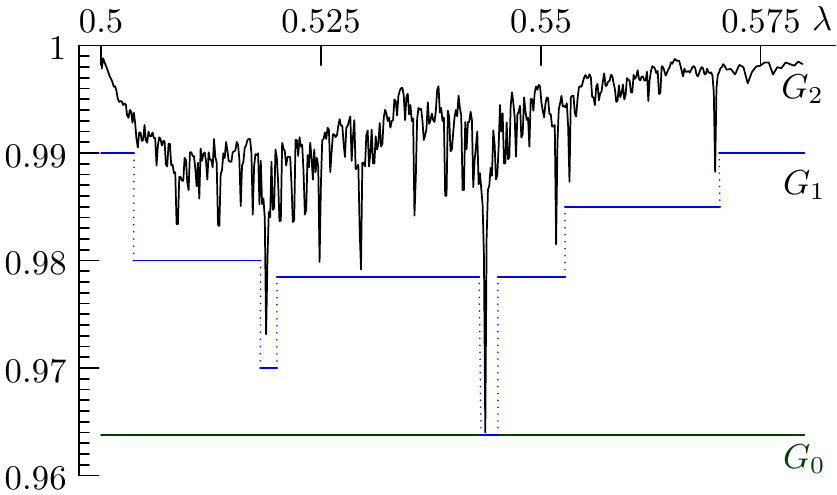}
\captionof{figure}{Plots of $G_0$, $\gro$ and~$\gfi$.}
\label{fig:g2-bernoulli}
}

The behaviour of the lower bound function~$\gfi$ appears to be
    quite intriguing, in particular, the largest dimension drops seem to correspond to
    the reciprocals of the limit points of the set of Pisot numbers,
    see Section~\ref{s:algebraicLambda} for further discussion and 
    Figure~\ref{fig:plotBC}, for detailed plots.

    To the best of our knowledge, the best result to date is due 
    Feng and Feng~\cite{FF21}; they obtained a global lower bound of
    $\dim_H(\mu_\lambda) \geq 0.9804085$.    
    They give an alternative approach for computing a lower bound for
    $\dim_H(\mu_\lambda)$, which uses the conditional entropy. 
    Three years earlier Hare and Sidorov~\cite{HS18} showed that $\dim_H(\mu_\lambda) \geq 0.82$.
    Their method depends on a result of Hochman and uses the fact that the dimension
    of~$\mu_\lambda$ can be expressed in terms of the Garsia entropy and most
    advances on this problem are based on this idea.

Our approach is different to both and is rooted in connection between iterated function schemes and random processes. 
In addition to uniform estimates, it allows us to compute good lower bounds on
$\dim_H(\mu_\lambda)$  for individual values~$\lambda$.

The following set of algebraic numbers, intimately related to Pisot numbers, is also
extensively studied.
\begin{definition}
    A Salem number is an algebraic integer $\sigma >1$ of degree at
    least 4, conjugate to $\sigma^{-1}$, all of whose conjugates, excluding
    $\sigma$ and $\sigma^{-1}$, lie on the unit circle.
\end{definition}
We refer to a survey by Smyth~\cite{Sm15} for an introduction to
the topic. 
The set of limit points of Salem numbers contains the Pisot numbers. 
We have computed the lower bound for the reciprocals of Salem numbers,
thus providing a partial supporting evidence that there is no dimension drop for
these parameter values.
\begin{theorem}
\label{thm:salemnum}
\begin{enumerate}
    \item For every one of the $99$ values  $\frac{1}{2} < \lambda < 1$ which is the reciprocal of  a Salem number of degree at most $10$
one has that  $\dim_H(\mu_\lambda) \geq 0.98546875$. Detailed estimates are
tabled in Appendix~\ref{ap:salem}. 

\item One can also consider the $47$ known so called small Salem numbers $\frac{10}{13}<  \lambda < 1$ and show that
$\dim_H(\mu_\lambda) \geq 0.999453125$.  Lower bounds on the dimensions of the
Bernoulli convolutions for the reciprocals of small Salem numbers are presented
in Appendix~\ref{ap:smallsalem}.
\end{enumerate}
\end{theorem}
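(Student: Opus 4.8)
The plan is to treat Theorem~\ref{thm:salemnum} as an application of the single-parameter version of our algorithm. As remarked above, the method produces, for any admissible individual $\lambda$, a rigorous lower bound for $\dim_H(\mu_\lambda)$; both assertions then follow by running this procedure on each of the finitely many reciprocals of Salem numbers in the two families and recording the smallest bound obtained. The decisive structural point is that every such $\lambda$ is algebraic with an explicitly known minimal polynomial, so the entire computation can be performed in exact, or validated interval, arithmetic inside the number field $\mathbb{Q}(\sigma)$, where $\sigma=\lambda^{-1}$. This guarantees that the output is a genuine lower bound rather than a mere numerical approximation.

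First I would assemble the two input lists together with the corresponding minimal polynomials: for part~(i), the $99$ Salem numbers of degree at most $10$; for part~(ii), the $47$ known small Salem numbers lying in the interval $(1,\tfrac{13}{10})$. In each case I pass to $\lambda=\sigma^{-1}$, check that $\tfrac12<\lambda<1$ so that $\mu_\lambda$ is supported on the interval $\bigl[-\tfrac1{1-\lambda},\tfrac1{1-\lambda}\bigr]$ described in the introduction, and confirm that the hypotheses needed to apply our scheme are met.

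Next I would run the algorithm separately on each $\lambda$, choosing the working precision and the refinement of the associated partition fine enough to certify the target value, namely $0.98546875$ in part~(i) and $0.999453125$ in part~(ii). Since the small Salem numbers satisfy $\sigma<\tfrac{13}{10}$, the corresponding $\lambda$ lie well above $\tfrac1{\sqrt2}$, so one may first invoke the inequality~\eqref{eq:sqrt}, $\dim_H(\mu_\lambda)\ge\dim_H(\mu_{\lambda^2})$, to replace such a $\lambda$ by a smaller representative before applying the individual bound; this is purely an efficiency measure and does not affect the certified estimates. Taking the minimum of the certified lower bounds across each list then yields the two stated inequalities.

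The hard part will be certifying bounds this close to $1$. The uniform estimate of Theorem~\ref{t:main} falls short of these targets precisely because, ranging over an interval of parameters, it must accommodate the worst case; the single-parameter computation does better only by exploiting the exact algebraic structure of each $\lambda$. For Salem numbers this structure is delicate: the Galois conjugates of $\sigma$ lie on the unit circle rather than strictly inside it, placing these parameters at the boundary between Pisot behaviour, where a dimension drop is known, and the generic case, where the dimension equals $1$. Pushing the certified bound up to within $10^{-3}$ of $1$ therefore demands a very fine partition and high-precision exact arithmetic in $\mathbb{Q}(\sigma)$, carried out parameter by parameter, while keeping every step validated so that the reported minimum is a true lower bound. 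Managing this refinement, and verifying that the certified output does not silently collapse to a trivial estimate, is the principal obstacle.
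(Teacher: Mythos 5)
Your proof architecture is the same as the paper's: both assertions are established by running the diffusion-operator certification separately on each member of the two finite lists of Salem reciprocals and taking the minimum of the certified bounds (the constants $0.98546875$ and $0.999453125$ are exactly the smallest entries of the tables in Appendices~\ref{ap:salem} and~\ref{ap:smallsalem}). Where you genuinely diverge is in how rigor is achieved at an \emph{irrational} parameter. The paper is explicit that the single-parameter certificate (Corollary~\ref{cor:single}) is only available for rational $\lambda$ exactly representable in machine memory; for Salem values it instead computes each $s_k$ to accuracy $10^{-32}$, encloses it in a rational-endpoint interval $\Lambda_k=B_{10^{-8}}(s_k)$, and applies the uniform operator $\mathcal D_{\alpha,\Lambda_k,\mathcal J}$ of Theorem~\ref{thm:5star} and Corollary~\ref{cor:uniform}, so ordinary floating-point with controlled rounding suffices and the $10^{-8}$ parameter enlargement is absorbed harmlessly by the supremum over $\Lambda$ in~\eqref{eq:D-Lambda}. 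Your alternative --- exact arithmetic in $\mathbb Q(\sigma)$ with the single-parameter operator --- is viable in principle, since the geometric comparisons needed to evaluate the discretized operator reduce to sign determinations of elements of $\mathbb Q(\sigma)$, but it is not purely field arithmetic as claimed: the scaling factor $\lambda^{-\alpha}$ (with $\alpha$ a dyadic rational) does not lie in $\mathbb Q(\sigma)$, so you would still need a rigorous rational upper enclosure of it; moreover the small Salem numbers have degree up to $44$, so exact field arithmetic over $10^5$--$10^6$ partition cells and hundreds of iterations would be vastly costlier than the paper's interval enclosure, with no improvement in the certified constant. Finally, a small caution: your optional use of~\eqref{eq:sqrt} to pass from $\lambda$ to $\lambda^2$ never invalidates a bound, but it is not neutral for the estimates either --- what you then certify is a lower bound for $D_2(\mu_{\lambda^2})$, which may be strictly worse than what the method yields at $\lambda$ itself; the paper works at each Salem reciprocal directly.
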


Another conjecture suggests that there exists~$\varepsilon>0$ such that for any
$\lambda\in (1-\varepsilon,1)$ the dimension of the
measure~$\mu_\lambda$ equals~$1$.
In particular,  Breulliard---Varj\'u~\cite{BV19} showed  that
there exists $\varepsilon > 0$ so that 
$\dim_H(\mu_\lambda) =1$ for $1-\varepsilon < \lambda < 1$ \emph{under the 
assumption that the Lehmer's conjecture holds}.
The Lehmer's conjecture states that the Mahler measure
of any nonzero noncyclotomic irreducible polynomial with integer coefficients
is bounded below by some constant $c > 1$. It implies, in particular, that there
exists a smallest Salem number.  

As another application of our method, we give an asymptotic for the lower bound
of $\dim_H \mu_\lambda$ as $\lambda \to 1$ in Section~\ref{s:abounds}. More precisely,
we establish the following result. 
\begin{theorem}   
    \label{thm:near1}
There exist $c > 0$ and $\varepsilon  > 0$ so that 
$\dim_H(\mu_\lambda) \geq 1 - c (1-\lambda)$ for $1-\varepsilon < \lambda < 1.$
\end{theorem}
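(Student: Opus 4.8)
The plan is to obtain Theorem~\ref{thm:near1} as an application of the general lower bound furnished by our construction, by analysing that bound asymptotically as $\lambda\to1^-$. It is convenient to set $t:=\log(1/\lambda)$, so that $t\to0^+$ and $t=(1-\lambda)+\tfrac12(1-\lambda)^2+\cdots$; in particular $t$ and $1-\lambda$ are comparable, and it suffices to produce a bound of the form $\dim_H(\mu_\lambda)\ge1-ct$ for all sufficiently small $t>0$. The contraction rate of the scheme is exactly $t$, which plays the role of the denominator in every dimension estimate, so the task is to show that the numerator matches $t$ up to an error of order $t^2$; equivalently, that the dimension defect $1-\dim_H(\mu_\lambda)$ is at most of order $t$.

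The cleanest route passes through the correlation (or $L^2$) dimension, which bounds the Hausdorff dimension of the measure from below: $\dim_H(\mu_\lambda)\ge\liminf_{r\to0}\frac{\log C(r)}{\log r}$, where $C(r)=(\mu_\lambda\times\mu_\lambda)\{(x,y):|x-y|\le r\}$ is the correlation sum. The key observation, and the point of contact with the random-process viewpoint, is that $C(r)=\Pr(|Y|\le r)$ for the self-similar random variable $Y=\sum_{k\ge0}D_k\lambda^k$, where $D_k=\xi_k-\xi_k'$ is the difference of two independent signs, taking the value $0$ with probability $\tfrac12$ and $\pm2$ each with probability $\tfrac14$. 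Thus the problem is reduced to controlling how much the difference process $Y$ concentrates near the origin.

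The asymptotic mechanism as $\lambda\to1$ is then the following. Grouping the first $N:=\lfloor1/t\rfloor\asymp1/(1-\lambda)$ levels, over which the weights $\lambda^k=e^{-kt}$ all lie between $e^{-1}$ and $1$, the partial sum $\sum_{k<N}D_k\lambda^k$ is a sum of $\asymp1/t$ independent terms of comparable size, so by a local central limit estimate $Y$ has variance $\asymp1/t$ and a density near $0$ of order $\asymp\sqrt t$. Consequently, in the range of scales governed by this central behaviour, $C(r)\asymp\sqrt t\,r$, and the corresponding local scaling exponent is $1-O(t/\log(1/r))$. First I would make this Gaussian spreading precise and uniform; then I would propagate it to the small scales using the self-similar renewal relation satisfied by the law of $Y$ (the three maps $z\mapsto\lambda z$ and $z\mapsto\lambda z\pm2$ with weights $\tfrac12,\tfrac14,\tfrac14$), tracking the contraction $\lambda=e^{-t}$ through the $\asymp1/t$ overlapping scales and showing that the concentration of $Y$ near $0$ grows no faster than $r^{1-ct}$. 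Feeding this into the correlation-dimension inequality yields $\dim_H(\mu_\lambda)\ge1-ct\ge1-c'(1-\lambda)$ on a one-sided neighbourhood $1-\varepsilon<\lambda<1$, with explicit constants.

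The main obstacle is the control of the \emph{overlaps}: because the three maps heavily overlap when $\lambda$ is close to $1$, the cross terms in the renewal relation for $C(r)$ are not negligible, and it is precisely their cumulative effect over the $\asymp1/t$ relevant scales that reduces the naive, full-dimensional estimate down to $1-ct$. Quantifying this requires a concentration bound for $Y$ near $0$ that is valid down to small scales and, crucially, \emph{uniform in} $\lambda$, so as to convert the pointwise $\lambda\to1$ asymptotic into an honest inequality on an entire interval. This is where the arithmetic of $\lambda$ could in principle intervene (as it does for reciprocals of Pisot numbers at smaller parameters), and the role of the Gaussian spreading of variance $\asymp1/t$ is exactly to dominate any such resonant clustering down to the scales that matter, leaving only the linear-in-$(1-\lambda)$ loss. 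The reduction to this uniform concentration estimate is a direct application of the machinery behind Theorem~\ref{t:main}; establishing the estimate itself is the crux.
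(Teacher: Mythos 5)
Your reduction is sound as far as it goes: passing to the correlation dimension, rewriting the correlation sum as the concentration near the origin of the difference process $Y=\sum_{k}(\xi_k-\xi_k')\lambda^k$, and identifying the Gaussian scale (variance $\asymp 1/t$ with $t=\log(1/\lambda)$) are all consistent with the paper's framework. But there is a genuine gap at exactly the point you yourself flag as ``the crux'': you never establish the uniform small-scale concentration bound $\mathbb P(|Y|\le r)\lesssim r^{1-ct}$. A local central limit estimate controls $Y$ only at scales comparable to its standard deviation; propagating this down to $r\to0$ ``using the self-similar renewal relation'' is precisely where the overlap difficulty lives, and your text asserts that the Gaussian spreading ``dominates any resonant clustering'' without an argument. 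This cannot be a soft step: for fixed Pisot $\lambda$ the small-scale behaviour is genuinely worse than the central-scale heuristic suggests (indeed $D_2(\mu_\lambda)<1$ there), so one needs a mechanism that converts central-limit information into an inequality valid at \emph{every} scale, uniformly in the $\asymp 1/t$ overlapping levels; a naive scale-by-scale induction through the renewal relation degrades unless one has a functional inequality preserved by the renewal step. As written, your ``reduction to a uniform concentration estimate'' restates the theorem rather than proving it, and it is not a direct application of the machinery behind Theorem~\ref{t:main}: that machinery produces the conclusion only once a test function is exhibited.

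The paper closes exactly this gap, and the closure is short and elementary. It takes the explicit Gaussian test function $f_\varepsilon(x)=\exp(-\delta(1-\varepsilon)^2x^2)$ with $\delta=2\varepsilon-\varepsilon^2$ and $\lambda=1-\varepsilon$, and verifies the single one-step inequality $\Ds_{\alpha,\mS}f_\varepsilon\prec f_\varepsilon$ for every $\alpha<1-c\varepsilon$ with $c>\frac32$; after simplification this reduces to
\begin{equation*}
\tfrac12+\tfrac12\cosh(2\delta x)\le \exp\left(\delta^2x^2\right),
\end{equation*}
checked by comparing Taylor coefficients term by term ($2^{2k}/(2k)!$ against $1/k!$). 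Theorem~\ref{t:certificate-D2} --- whose proof is the supermartingale argument for $w_n=\lambda^{-\alpha n}\psi(z_n)$ along the very backward difference process you describe --- then yields $\mathbb P(z_n\in J)\le C\lambda^{\alpha n}$ for all $n$ simultaneously, which is precisely your missing uniform concentration estimate, obtained with no local CLT and no multi-scale renewal analysis, because the one-step test-function inequality is a drift condition that self-propagates through all scales. In hindsight your Gaussian of variance $\asymp1/t$ is the right object, but its correct role is as the paper's test function, not as an approximate density of $Y$ (which may be singular). To repair your proposal, replace the unproven propagation step by exhibiting this (or any) test function and invoking Theorem~\ref{t:certificate-D2}.
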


The Bernoulli convolutions are a special case of a far more general construction of self-similar measures, 
which we describe next. 
\subsection{Iterated function schemes with similarities}
\label{subsection:ifs}

Let $k$ be fixed. Given $0 < \lambda < 1$ and $\bar c \in \mathbb R^n$, consider a collection $\mS =
\{f_j, j = 1,\ldots,k\}$ of $k$ contraction similarities  defined by 
$$
f_j \colon \bbR \to \bbR; \qquad f_j(x) = \lambda x \emr{\, +\,} c_j, \mbox{ for } 1\leq j \leq k.
$$
Let $\bar p = (p_1, \cdots, p_k)$ be a probability vector where $0 < p_j < 1$ and
$\sum\limits_{j=1}^k p_j=1$.

\begin{definition}
    \label{def:IFS}
    We call a triple $\mS(\lambda, \bar c, \bar p)$ an iterated function scheme of
    similarities. We will omit the dependence on $\bar c$ and $\bar p$ in the sequel
    when it leads to no confusion. 
\end{definition}

\begin{definition}
A  probability measure $\mu$ is called a {\it stationary measure} for the
contractions $f_1, \cdots, f_k$ and the probability vector $\bar p$
if it satisfies
$$
\mu = \sum_{j=1}^k p_j (f_j)_*\mu,
$$
i.e., $\int F(x) d\mu(x) = \sum\limits_{j=1}^k p_j \int F(f_jx) d\mu(x)$, for
all bounded continuous functions~$F$. 
\end{definition}
\noindent The existence and the uniqueness of stationary measures in this seting follows from the
work of Hutchinson~\cite{hutchinson}. 

In this note we are particularly concerned with the following two systems.
The first one has Bernoulli convolution as the stationary measure.  
\begin{example}[Function scheme for Bernoulli convolutions]
    \label{ex:BCsystem}
Given a real number $\frac12<\lambda<1$ consider the iterated function scheme
 of two maps $f_0$, $f_1$ given by $f_j(x) = \lambda x + j$, $j=0,1$ and 
probability vector $\bar p = \left(\frac12,\frac12\right)$.
Then the stationary measure $\mu = \mu_\lambda$ corresponds to the distribution of the random variable 
$$
\sum_{k=0}^\infty \eta_k \lambda^k,
$$
where $\eta_k$ are i.i.d. assuming values $0$ and $1$ with equal probability. 
This agrees with formula~\eqref{eq:rv} up to the change of variables $\xi_k = 2\eta_k-1$.
\end{example}

\begin{example}[$\{0,1,3\}$-system]\label{ex:013}
We can consider  the  contractions  $f_1, f_2, f_3: \mathbb R \to \mathbb R$ defined by 
$$
f_1(x) = \lambda x, \quad
f_2(x) = \lambda x+1, \quad
f_3(x) = \lambda x+3,
$$
and the probability vector $p = (\frac{1}{3}, \frac{1}{3}, \frac{1}{3})$.
For the corresponding stationary measure $\mu_{\lambda}^{0,1,3}$ it is known
that for almost all $\frac{1}{4} \leq \lambda \leq \frac{1}{3}$ with respect to
Lebesgue measure we have 
\[
\dim_{H}(\mu_{\lambda}^{0,1,3}) = \frac{\log 3}{\log \lambda^{-1}}
\]
(this equality also holds for all $\lambda<\frac{1}{4}$) and for almost all
$\frac{1}{3} \leq \lambda \leq \frac{2}{5}$  with respect to Lebesgue measure
we have  $\dim_{H}(\mu^{0,1,3}_\lambda) = 1$; see~\cite{KSS95},~\cite{PS95}.
\end{example}

The next theorem provides a lower bound for~$\dim_H(\mu_\lambda^{0,1,3})$.

\begin{theorem}\label{thm:dim013}
The dimension of the stationary measure $\mu_{\lambda}^{0,1,3}$ for the
$\{0,1,3\}$-system has the lower bounds
\begin{enumerate}
    \item\label{013i} For any $\lambda \in [\frac{1}{4},\frac{2}{5}]$ we have that
 $ \dim_H(\mu_{\lambda}^{0,1,3}) \ge G^{0,1,3}_0 := \min\left\{\frac{\log
 3}{\log \lambda^{-1}},1\right\} - 0.2.$
\item\label{013ii} Moreover, $\dim_H(\mu_\lambda)$ is bounded from below by a piecewise-continuous
    function 
    $$
    \gro^{0,1,3}(\lambda)|_{I_k} = \min\left\{\frac{\log 3}{\log
    \lambda^{-1}},1\right\}-c_k
    $$
    with $11$ intervals of continuity $I_k$, $k = 1, \ldots 11$ which are given
    in Table~\ref{tab:013}, together with the corresponding values $c_k$. In
    other words, for any $\lambda \in [\frac14, \frac25]$
    $$
     \dim_H(\mu_{\lambda}^{0,1,3}) \ge \gro^{0,1,3}(\lambda).
    $$ 
 \item\label{013iii} The estimate from part~\ref{013ii} can be refined further. The dimension
    $\dim_H(\mu_\lambda^{0,1,3})$ is bounded from below by a piecewise-continuous
    function $\gfi^{0,1,3}$ with approximately~$10000$ intervals of continuity,
$\dim_H(\mu_{\lambda}^{0,1,3}) \ge \gfi^{0,1,3}(\lambda)$.
The graph of the function~$\gfi^{0,1,3}$ is presented in Figure~\ref{fig:013}.
\end{enumerate}
\end{theorem}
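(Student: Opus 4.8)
The plan is to treat part~\ref{013iii} as the substantive statement and to deduce parts~\ref{013ii} and~\ref{013i} from it by coarsening the partition, exactly as the authors announce for Theorem~\ref{t:main}. As a preliminary I would record that the $\{0,1,3\}$-system of Example~\ref{ex:013} is an iterated function scheme of similarities in the sense of Definition~\ref{def:IFS}, with common ratio $\lambda$, translations $0,1,3$ and uniform weights, so that its stationary measure $\mu_{\lambda}^{0,1,3}$ exists and is unique. Its similarity dimension is $\frac{\log 3}{\log\lambda^{-1}}$, and hence $\min\{\frac{\log 3}{\log\lambda^{-1}},1\}$ is the elementary upper bound for $\dim_H(\mu_{\lambda}^{0,1,3})$ on the line; every estimate in the theorem is phrased as this quantity minus a nonnegative deficit capturing the effect of the overlaps.

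The heart of the argument is the general lower-bound algorithm of this paper, which I would apply verbatim to the present scheme. The idea is to reinterpret the local scaling of $\mu_{\lambda}^{0,1,3}$ through the auxiliary random process that records the relative positions at which distinct cylinders of a common generation overlap: the difference of two independent realizations of the defining sum evolves, after rescaling by $\lambda^{-n}$, on a bounded set, and encoding this relative-position data as a finite (suitably truncated) Markov chain turns the lower bound on dimension into a lower bound on an entropy-to-Lyapunov ratio. That ratio is in turn governed by the leading eigendata of an explicit nonnegative matrix depending on $\lambda$. For a fixed subinterval of the parameter range I would certify this eigendata rigorously by interval arithmetic, so that the resulting numerical bound holds simultaneously for every $\lambda$ in the subinterval. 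Carrying this out over a partition of $[\tfrac14,\tfrac25]$ into approximately $10000$ pieces yields the function $\gfi^{0,1,3}$ and hence part~\ref{013iii}; see Figure~\ref{fig:013}.

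Parts~\ref{013ii} and~\ref{013i} then require no new analysis. For part~\ref{013ii} I would merge the fine intervals into the $11$ intervals $I_k$ of Table~\ref{tab:013} and set
\[
c_k := \sup_{\lambda\in I_k}\Bigl(\min\bigl\{\tfrac{\log 3}{\log\lambda^{-1}},1\bigr\}-\gfi^{0,1,3}(\lambda)\Bigr),
\]
so that on each $I_k$, with $\gro^{0,1,3}(\lambda)=\min\{\tfrac{\log 3}{\log\lambda^{-1}},1\}-c_k$, one has
\[
\gro^{0,1,3}(\lambda)\;\le\;\gfi^{0,1,3}(\lambda)\;\le\;\dim_H\bigl(\mu_{\lambda}^{0,1,3}\bigr).
\]
Part~\ref{013i} then follows at once by checking that the global constant satisfies $0.2\ge\max_k c_k$.

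The main obstacle lies entirely in the core step. Two sources of error must be controlled rigorously and uniformly in $\lambda$: the truncation of the a priori infinite state space of relative positions, and the propagation of rounding through the certified eigenvalue estimate. Both are most severe exactly where the overlaps are strongest, near parameters at which cylinders of the $\{0,1,3\}$-system nearly coincide, since there the deficit, and hence $c_k$, is largest and the certified bound is hardest to keep close to the true dimension; this is also the region that must be partitioned most finely to secure the constant $0.2$ in part~\ref{013i}.
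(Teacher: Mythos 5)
Your overall architecture --- treating part~\ref{013iii} as the substantive computation and deducing parts~\ref{013ii} and~\ref{013i} by coarsening, with $c_k$ taken as the supremum of the deficit on each $I_k$ --- is exactly how the paper organizes the proof. But the engine you propose for part~\ref{013iii} is not the paper's, and it has a genuine gap. You want to encode the rescaled difference process as a finite (truncated) Markov chain of relative overlap positions and extract a lower bound on an entropy-to-Lyapunov ratio from the leading eigendata of a nonnegative matrix. This is the Garsia-entropy route of \cite{GKT02}, \cite{AFKP}, \cite{HKPS19}, and it cannot deliver the claimed \emph{uniform} bounds, for two reasons. First, the state space of relative positions is finite only for special algebraic $\lambda$ (this is precisely why the paper says that method ``requires studying each parameter value independently''); for a typical $\lambda$ in one of your $10000$ parameter intervals the chain is infinite, and truncating it does not produce a one-sided, rigorous bound on the entropy. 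Second, and more fundamentally, the inequality you need points the wrong way: for self-similar measures the ratio $\min\left\{h(\lambda)/\log \lambda^{-1},1\right\}$ is in general only an \emph{upper} bound for $\dim_H(\mu_\lambda)$; promoting it to a lower bound requires Hochman's exponential-separation theorem (cf.~\eqref{eq:entropy}), which is available at specific algebraic parameters but not uniformly over an interval of parameter values containing transcendental numbers and parameters for which separation is unknown. So the numbers your scheme would certify are not lower bounds for $\dim_H(\mu_\lambda^{0,1,3})$ for \emph{every} $\lambda$ in an interval.

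The paper's actual mechanism avoids entropy altogether: it bounds the correlation dimension, using $\dim_H(\mu)\ge D_2(\mu)$ (Lemma~\ref{l:inequality}, the potential principle), and certifies $D_2(\mu_\lambda)\ge\alpha$ simultaneously for all $\lambda\in\Lambda$ by exhibiting a piecewise-constant test function $\psi$ with $\mathcal D_{\alpha,\Lambda,\mathcal J}\psi\preccurlyeq\psi$ (Theorem~\ref{thm:5star}); the test function is produced by iterating the truncated operator $\hatop{A}$ on $\mathds{1}_J$ (Proposition~\ref{prop:auxop}, Corollary~\ref{cor:uniform}), with two rounds of computation (a coarse pass giving Table~\ref{tab:013step1}, then a refined pass). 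The proof that a sub-invariant test function implies $D_2\ge\alpha$ is a supermartingale escape-rate argument for the renormalized difference process (Lemma~\ref{lem:pdim} and the proof of Theorem~\ref{t:certificate-D2}): the certificate forces $\mathbb P(z_n\in J)\le\theta^{-2}\lambda^{\alpha n}$, which in turn makes the energy integral $I(\mu,\alpha')$ finite for every $\alpha'<\alpha$. You correctly identified the difference process, but it is used for an escape-probability estimate, not for entropy counting; no eigenvalue is ever computed, only a sub-eigenfunction inequality is verified, and that is what makes the computation both rigorous and uniform in $\lambda$. If you replace your core step by this certificate (covering $[\tfrac14,\tfrac25]$ by small overlapping intervals $\Lambda_k$ and running the iteration), then the remainder of your write-up --- the coarsening to Table~\ref{tab:013} and the check that $0.2\ge\max_k c_k$ --- goes through as you stated.
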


\parbox[c][70mm][t]{55mm}{

\smallskip

\begin{tabular}{|c|c|}
    \hline
    Interval $I_k$ & $c_k$ \\
    \hline
$[0.2500,     0.2630]$&$  0.0350 $ \\
$[0.2630,     0.2650]$&$  0.0550 $ \\
$[0.2650,     0.2800]$&$  0.0350 $ \\
$[0.2800,     0.2820]$&$  0.0650 $ \\
$[0.2820,     0.2980]$&$  0.0350 $ \\
$[0.2980,     0.3210]$&$  0.0850 $ \\
$[0.3210,     0.3320]$&$  0.1100 $ \\
$[0.3320,     0.3350]$&$  0.2000 $ \\
$[0.3350,     0.3450]$&$  0.1100 $ \\
$[0.3450,     0.3670]$&$  0.0800 $ \\
$[0.3670,     0.4045]$&$  0.0400 $ \\
    \hline
\end{tabular}   
\captionof{table}{Table of values of $c_k$.}
\label{tab:013}
}%
\parbox[c][75mm][t]{105mm}{
\includegraphics[width=105mm,height=65mm]{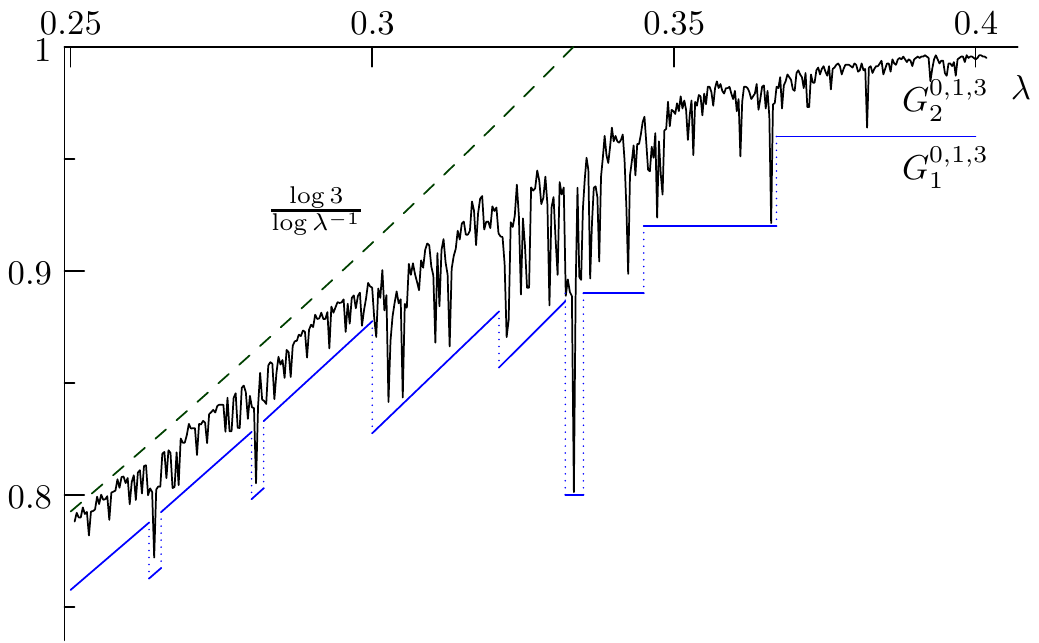}
\captionof{figure}{Plots of $G^{0,1,3}_1$ and~$G^{0,1,3}_{2}$.}
\label{fig:013}
}
\begin{remark}
An alternative version for (i) could be:
For any $0.25<\lambda<0.4$ we have that $\dim_H \mu_\lambda^{0,1,3} \ge
G_0^{0,1,3}(\lambda)$, where 
$$
G_0^{0,1,3}(\lambda) := 
\begin{cases} 
    \frac{\log 3}{\log \lambda^{-1}}-0.11, & \mbox{ if }
    0.25<\lambda<0.3210; \\ 
    \frac{\log 3}{\log \lambda^{-1}}-0.2, &\mbox{ if } 0.3210<\lambda < 0.3250; \\
    0.89, &\mbox{ otherwise. }
\end{cases}
$$
In particular, we see that the largest dimension drop seems to take place at $\lambda =
\frac13$. For this parameter value the dimension can be computed explicitly~\cite{KV2021} following the method
of~\cite{GKT02}, more precisely, 
$$
\dim_H\mu_{1/3}^{0,1,3} =0.83703915049\pm10^{-10}.
$$
\end{remark}

As in the case of Bernoulli convolutions, the biggest dimension drops
appear to correspond to the reciprocals of the limit points of hyperbolic
numbers\footnote{An algebraic number is called hyperbolic, if all its Galois
conjugates lie inside the unit circle}. However,
in contrast to the Pisot numbers in the interval $(1,2)$, the limit set of
hyperbolic numbers in the interval $\left(\frac52,4\right)$ is not very well studied. 
We give detailed plot of $G_2^{0,1,3}$ in Figure~\ref{fig:plot013} and discuss its feautures
in Section~\ref{s:algebraicLambda}. 

We obtain lower bounds for the Hausdorff dimension of Bernoulli convolutions and
for the stationary measures of the $\{0,1,3\}$-system using the same method,
which we outline in the next section. 

\subsection{Approach to lower bounds for Hausdorff dimension}
\label{ss:approach}
The Hausdorff dimension of a measure is an important characteristic which is
generally difficult to estimate, both numerically and analytically. 
We introduce two alternative characteristics of dimension type, namely, the
correlation dimension and the Frostman dimension, which are easier to
estimate and give a lower bound on the Hausdorff dimension. Whilst the numerical
results suggest that in the case of iterated function schemes with similarities 
the Frostman dimension and Hausdorff dimension behave very different, the
correlation dimension appear to exhibit the same dependence on parameter values 
as expected from the Hausdorff dimension. 

To sum up, our approach is the following: 

\indent\parbox{0.8\textwidth}{
\begin{enumerate}
    \item[Step 1:] Replace the Hausdorff dimension with the correlation
        dimension or the Frostman dimension; 
    \item[Step 2:] Compute the lower bound for the correlation dimension or the
        Frostman dimension. 
\end{enumerate}
}

\subsubsection{Affine iterated function schemes with similarities}

We begin by defining the correlation dimension which bounds the Hausdorff
dimension from below (cf. Lemma~\ref{l:inequality}). It has been introduced
in~\cite{PGH83} as a characteristic of dimension type. 
The notion was subsequently formalised by Pesin in~\cite{P93}, see
also~\cite{CHY97} and~\cite{SS98}. We will give a formal definition later in
Section~\ref{subsection:dimC}.

We proceed by introducing one of our main tools, a \emph{symmetric
diffusion operator} associated to an iterated function scheme. 

Let $\mS(\lambda,\bar c, \bar p)$ be an iterated function scheme of
similarities. Assume that $J_\lambda \subset \mathbb R$ is an interval
such that $f_i(J_\lambda)\subset J_\lambda$ for all $i=1,\dots,k$. 
For the invariant measures $\mu_\lambda$ we have that $\supp \mu_\lambda \subset
J_\lambda$.  

We say that an interval $J$ is $\lambda$-\emph{admissible} for an iterated function
scheme~$\mS(\lambda,\bar c,\bar p)$ if 
\begin{equation}
\label{eq:admit}
\mbox{ Interior}(J) \supset \overline{ \{x - y \mid x,y \in J_\lambda \} }
\end{equation}
This is illustrated in Figure~\ref{fig:admissible}.
Given a (possibly infinite) set of parameter values~$\Lambda\subset[0,1]$ we say that the interval $J$ is
{\it $\Lambda$-admissible} if it is an admissible interval for all $\lambda\in\Lambda$. 

\begin{figure}
    \centering
    \includegraphics{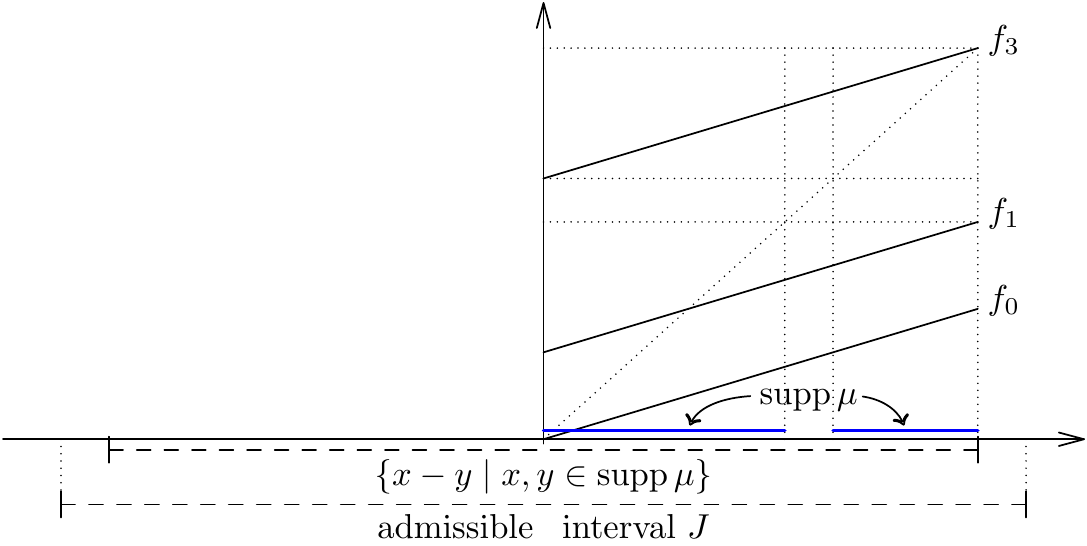}
    \caption{An iterated function scheme of three similarities $f_0(x) =
    \frac{x}{3}$, $f_1(x)=\frac{x}3+1$, and $f_3(x)=\frac{x}{3}+3$
    and an admissible interval~$J$.}
    \label{fig:admissible}
\end{figure}

\begin{definition}
    \label{def:symdif}
Given an iterated function scheme of similarities~$\mS(\lambda,\bar c, \bar p)$ for any $\alpha\in (0,1)$ we define the \emph{symmetric diffusion operator} $\Ds_{\alpha,\mS}$ by
\begin{equation}
    \label{eq:dif-twoway}
[\Ds_{\alpha,\mS}\psi](x) := \lambda^{-\alpha} \cdot \sum_{i,j=1}^k p_i p_j \cdot \psi\left(\frac{x -c_i+c_j}{\lambda}\right).
\end{equation}
We consider this operator to be acting on the space of all functions on the
real line, however the subset of nonnegative functions 
$$
\left\{\emr{\psi}: \mathbb R \to \mathbb R^{+} \mid \supp \emr{\psi} \subseteq 
\overline{\{ x-y \mid x, y \in J_\lambda \}} \right\}
$$ 
is invariant with respect to~$\Ds_{\alpha,\mS}$. 
\end{definition}

\begin{remark}
Although the difference between two operators $\Ds_{\alpha_1,\mS}$ and
$\Ds_{\alpha_2,\mS}$ is in scaling factor only, we prefer to keep this factor
as a part of the definition. 
\end{remark}
We are now ready to state a key result, which is the basis for our numerical
method.
\begin{theorem}\label{t:certificate-D2}
Let~$\mS(\lambda,\bar c, \bar p)$ be an iterated function scheme of
similarities.
Assume that for some $\alpha>0$ there exists an admissible compact interval $J \subset
\mathbb R$, a function $\psi:\bbR\to\bbR^+$ with $\supp \psi \subset J$ which is
positive and bounded away from~$0$ and from infinity on~$J$, such that for any
$x \in J$
\begin{equation}
    \label{eq:D2-test}
    [\Ds_{\alpha,\mS}\psi](x) < \psi(x).
\end{equation}
Then the correlation, and hence the Hausdorff, dimension of the $\mS$-stationary
measure $\mu$ is bounded from below by~$\alpha$: 
\[
\dim_H (\mu) \ge D_2(\mu)\ge \alpha.
\]
\end{theorem}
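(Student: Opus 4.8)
The plan is to pass from $\mu$ to its \emph{self‑correlation measure} $\nu$, the law of the difference $X-Y$ of two independent $\mu$‑distributed variables, and to read off the correlation dimension from the mass $\nu$ assigns to small neighbourhoods of the origin. By Fubini, $\int \mu(B(x,r))\,d\mu(x)=(\mu\times\mu)\{(x,y):|x-y|<r\}=\nu((-r,r))$, so the definition of the correlation dimension recalled in Section~\ref{subsection:dimC} reads $D_2(\mu)=\liminf_{r\to0}\frac{\log\nu((-r,r))}{\log r}$. Thus it suffices to prove $\nu((-r,r))\le C\,r^{\alpha}$ for all small $r$, which gives $D_2(\mu)\ge\alpha$; the inequality $\dim_H(\mu)\ge D_2(\mu)$ is then Lemma~\ref{l:inequality}. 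The measure $\nu$ is itself self‑similar: writing $g_{ij}(z)=\lambda z+c_i-c_j$, the stationarity $\mu=\sum_j p_j(f_j)_*\mu$ yields $\nu=\sum_{i,j}p_ip_j\,(g_{ij})_*\nu$, and $\supp\nu\subset\overline{\{x-y:x,y\in J_\lambda\}}\subset\mathrm{int}(J)$.

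The heart of the argument is the identification of the iterates of $\Ds_{\alpha,\mS}$ with finite‑level approximations of $\nu$. Let $\nu_n=\sum p_{\vec\imath}p_{\vec\jmath}\,\delta_{T_{\vec\imath\vec\jmath}}$ be the law of $\sum_{l=0}^{n-1}\lambda^l(c_{i_l}-c_{j_l})$. Unwinding Definition~\ref{def:symdif} and using the homogeneity of the rescaling, one checks by induction that $[(\Ds_{\alpha,\mS})^n\psi](x)=\lambda^{-n\alpha}\int \psi\!\big((x-T)/\lambda^n\big)\,d\nu_n(T)$. Now $\Ds_{\alpha,\mS}$ is a positive operator, and admissibility forces $g_{ij}(J)\subset J$ for every $i,j$, so both $\psi$ and $\Ds_{\alpha,\mS}\psi$ vanish off $J$ and the hypothesised inequality~\eqref{eq:D2-test} holds on all of $\mathbb{R}$; by monotonicity it iterates to $(\Ds_{\alpha,\mS})^n\psi\le\psi\le M$, where $M:=\sup_J\psi<\infty$. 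On the other hand $\psi\ge m:=\inf_J\psi>0$ on $J$, so $\psi\!\big((x-T)/\lambda^n\big)\ge m\,\mathbf 1[\,T\in x-\lambda^nJ\,]$; evaluating the displayed identity at any $x\in J$ therefore gives $\nu_n(x-\lambda^nJ)\le (M/m)\,\lambda^{n\alpha}$. Since $x-\lambda^nJ$ contains an interval of radius $c\lambda^n$ about $x$, with $c>0$ depending only on $J$, this bounds $\nu_n$ on every such interval.

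It then remains to transfer the bound from $\nu_n$ to $\nu$. Splitting the defining series at level $n$ gives $\nu=\nu_n*\rho_n$, where $\rho_n$ is $\nu$ scaled by $\lambda^n$ and hence supported in $[-\lambda^nL,\lambda^nL]$ with $L:=\diam J_\lambda$. Consequently $\nu((-\lambda^n b,\lambda^n b))\le \nu_n\big(\,(-\lambda^n(b+L),\lambda^n(b+L))\,\big)$, and covering the latter interval by a number $N_0$ of radius‑$c\lambda^n$ intervals centred in $J$, with $N_0$ bounded independently of $n$, the previous paragraph yields $\nu((-\lambda^n b,\lambda^n b))\le N_0\,(M/m)\,\lambda^{n\alpha}$. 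Choosing for each small $r$ the integer $n$ with $\lambda^{n+1}b<r\le\lambda^n b$ converts this into $\nu((-r,r))\le C\,r^{\alpha}$, whence $D_2(\mu)\ge\alpha$ and the theorem follows.

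I expect the main obstacle to be precisely this bridge between the functional‑analytic hypothesis and the geometric quantity $\nu((-r,r))$. Two points need care: the iterates $(\Ds_{\alpha,\mS})^n\psi$ must be genuinely controlled on all of $\mathbb{R}$ by their values on $J$, which is where admissibility and the invariance $g_{ij}(J)\subset J$ enter to extend~\eqref{eq:D2-test} off $J$; and the discrepancy between the atomic approximation $\nu_n$ and the true measure $\nu$ must be absorbed through $\nu=\nu_n*\rho_n$ together with a uniformly bounded covering. I note that only the non‑strict inequality $\Ds_{\alpha,\mS}\psi\le\psi$ is actually used, so the strict inequality in~\eqref{eq:D2-test} leaves a margin to spare, while the comparability $m\,\mathbf 1_J\le\psi\le M\,\mathbf 1_J$, guaranteed by $\psi$ being bounded away from $0$ and infinity on $J$, is exactly what lets $\psi$ serve as an effective substitute for the indicator of $J$ in the estimates above.
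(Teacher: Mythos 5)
Your proof is correct, and it is in essence the paper's own argument with the probabilistic scaffolding stripped away. The paper introduces the backward difference process $z_n$ and proves that $w_n=\lambda^{-\alpha n}\psi(z_n)$ is a supermartingale, deducing $\mathbb E\, w_n\le \psi(0)<\theta^{-1}$ and hence $\mathbb P(z_n\in J)\le\theta^{-2}\lambda^{\alpha n}$; your identity $[(\Ds_{\alpha,\mS})^n\psi](x)=\lambda^{-n\alpha}\int\psi\bigl((x-T)/\lambda^n\bigr)\,d\nu_n(T)$, combined with the monotone iteration $(\Ds_{\alpha,\mS})^n\psi\preccurlyeq\psi\le M$ and $\psi\ge m$ on $J$, is exactly the same estimate written without conditional expectations (iterating the operator inequality \emph{is} the supermartingale property). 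Likewise your transfer step $\nu=\nu_n*\rho_n$, with $\rho_n$ supported on an interval of length $O(\lambda^n)$, plays the role of the paper's observation in Lemma~\ref{lem:pdim} that $|x-y|$ differs from its level-$n$ truncation by at most $m\lambda^{n+1}$; the paper needs the resulting bound only at the single center $x=0$, while you use finitely many centers plus a covering --- both are fine. What your formulation buys is a self-contained, purely measure-theoretic proof; what the paper's buys is that the martingale/stopping-time framework carries over to the non-self-similar setting of Theorem~\ref{t:certificate-D1}, where no exact convolution identity of your type is available.

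Two points should be patched, neither a genuine gap. First, your opening claim that the definition in Section~\ref{subsection:dimC} reads $D_2(\mu)=\liminf_{r\to0}\log\nu((-r,r))/\log r$ is not literally the paper's definition: Definition~\ref{def:D2} defines $D_2(\mu)$ via finiteness of the energy $I(\mu,\alpha)$. What you actually need is the implication that $\nu((-r,r))\le C r^\alpha$ for all small $r$ forces $I(\mu,\alpha')<\infty$ for every $\alpha'<\alpha$; this is a standard layer-cake computation, and it is precisely what the paper carries out at the end of the proof of Lemma~\ref{lem:pdim}, so it should be included or cited rather than folded into a definition. Second, the assertion that admissibility ``forces $g_{ij}(J)\subset J$'' deserves its one-line justification: from $f_i(J_\lambda)\subset J_\lambda$ one gets $c_i\in(1-\lambda)J_\lambda$, hence $|c_i-c_j|\le(1-\lambda)\diam J_\lambda$, and forward invariance of any interval whose interior contains the difference set of $J_\lambda$ follows; this is the content of the paper's Lemma~\ref{lem:return}, stated there in the equivalent backward form that trajectories leaving $J$ never return.
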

Theorem~\ref{t:certificate-D2} allows us to obtain rigorous lower
estimates for the correlation dimension~$D_2(\mu)$  of the stationary measure
$\mu$ for a single parameter value $\lambda$ (and thus for the Hausdorff
dimension $\dim_H\mu$), once a suitable test function~$\psi$ is found. 
This also provides us with a way to find an asymptotic lower bound and to prove
Theorem~\ref{thm:near1}. 

\begin{example}
    To illustrate the way Theorem~\ref{t:certificate-D2} is applied, we may choose
    $\lambda=0.75$, a function $\psi(x) = 1-0.2|x|$ and to apply the operator
    $\mathcal D^{(2)}_{0.2,\mathcal S}$. It is clear that we may choose $J =
    [-4,4]$. 
    Then Figure~\ref{fig:thm5ex} shows that 
    $\mathcal D^{(2)}_{0.2,\mathcal S} \psi (x) < \psi(x)$ and therefore $\dim_H \mu \ge 0.2$. 
    \begin{figure}
        \centerline{ \includegraphics{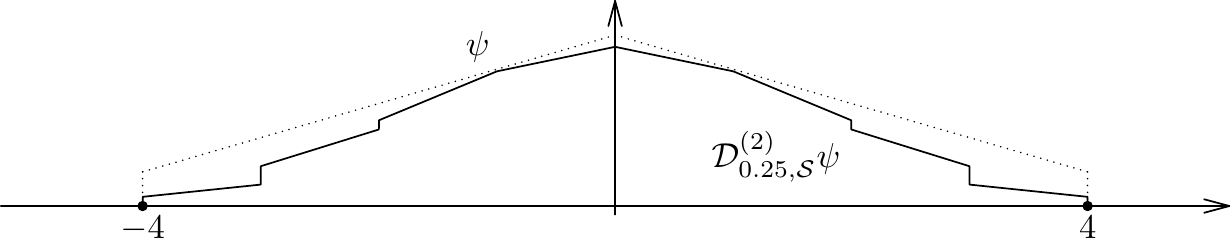} }
        \caption{Image of the function $\psi = 1 - 0.2|x|$ is strictly smaller
        than $\psi$.} 
        \label{fig:thm5ex}
    \end{figure}
\end{example}

We next want to adapt Theorem~\ref{t:certificate-D2} to prepare for a computer-assisted proof of
Theorems~\ref{t:main},~\ref{thm:salemnum} and~\ref{thm:dim013}. In Section~\ref{ss:efbounds-2} we modify
the operator $\Ds_{\alpha,\mS}$ to obtain an operator~${\mathcal
D}_{\alpha,\Lambda,\mathcal J}$ which preserves a subspace of piecewise constant functions,
and amend Theorem~\ref{t:certificate-D2} so that 
a common test function can be used for an open set of parameter
values~$\Lambda=(\lambda-\varepsilon,\lambda+\varepsilon)$.
This adaptation allows us to choose the test function to be piecewise constant on
intervals with rational endpoints and to verify the hypothesis of
Theorem~\ref{t:certificate-D2} numerically,
thus providing us with a means to obtain a uniform lower bound for the (correlation, and hence Hausdorff) 
dimension of the corresponding stationary measures.  

Afterwards, in Section~\ref{ss:efbounds-1} we give an iterative procedure to
construct a test function for the operators~$\Ds_{\alpha,\mS}$ and
${\mathcal D}_{\alpha,\Lambda,\mathcal J}$. 

A natural question arises. Assume that $D_2(\mu_\lambda)>\alpha$. Does there
exist a test function~$\psi$ so that~\eqref{eq:D2-test} holds? 
The next result gives an affirmative answer.
\begin{theorem}\label{t:finding-D2}
Let $\mu_\lambda$ be the unique stationary measure of a scheme of contraction
similarities~$\mS(\lambda)$. Then for any $\alpha<D_2(\mu_\lambda)$ the
hypothesis of Theorem~\ref{t:certificate-D2} holds. In other words, 
there exists an admissible interval $J$, a piecewise constant function $\psi$
with $\supp \psi \subset J$ which is positive and bounded away from 0 and from
infinity on $J$ and such that for any $x \in J$
\begin{equation*}
[\Ds_{\alpha,\mS}\psi](x) < \psi(x).
\end{equation*}
\end{theorem}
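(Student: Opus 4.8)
The plan is to establish the converse direction to Theorem~\ref{t:certificate-D2}: given $\alpha < D_2(\mu_\lambda)$, we must \emph{produce} a test function $\psi$ satisfying the strict inequality $[\Ds_{\alpha,\mS}\psi](x) < \psi(x)$ everywhere on an admissible interval $J$. The natural candidate is built directly from the stationary measure itself. First I would introduce the autocorrelation (or self-difference) measure $\nu$, the distribution of $x-y$ where $x,y$ are drawn independently from $\mu_\lambda$; equivalently $\nu = \mu_\lambda * \tilde\mu_\lambda$ where $\tilde\mu_\lambda$ is the reflection of $\mu_\lambda$. The key structural fact is that the stationarity relation $\mu = \sum_j p_j (f_j)_*\mu$ induces a self-similar relation for $\nu$, namely $\nu = \sum_{i,j} p_i p_j (g_{ij})_*\nu$ where $g_{ij}(t) = \lambda t + (c_i - c_j)$. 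Dualising this relation against test functions shows precisely that the operator $\Ds_{\alpha,\mS}$ is (up to the scaling $\lambda^{-\alpha}$) the transfer operator governing $\nu$, so that $\nu$ is an eigen-object: informally $\Ds_{\alpha,\mS}$ acting on the ``density'' of $\nu$ reproduces $\nu$ with eigenvalue $\lambda^{\alpha - s}$ at the natural scaling exponent $s$.

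The bridge to the correlation dimension is the identification of $D_2(\mu_\lambda)$ with the scaling exponent of $\nu$ near the origin. By definition the correlation dimension measures the decay rate of $\nu(B(0,r))$ as $r \to 0$, i.e.\ $D_2(\mu) = \liminf_{r\to 0} \frac{\log \nu([-r,r])}{\log r}$. The hypothesis $\alpha < D_2(\mu_\lambda)$ therefore means that $\nu$ has \emph{more} than $\alpha$-dimensional mass concentration at the origin, and this is exactly the slack that must be converted into the strict inequality. Concretely, I would define a candidate function by integrating $\nu$ against a smooth bump at scale $\lambda^{-\alpha}$-weighted, or more simply take $\psi$ to be (a mollification of) the function $x \mapsto \int_{|t|\le |x|} d\nu(t)$ suitably normalised, and compute how $\Ds_{\alpha,\mS}$ transforms it. The self-similarity of $\nu$ makes this computation tractable: one scaling step of the operator multiplies the relevant quantity by $\lambda^{-\alpha}$ times the $\nu$-mass contracted into the rescaled interval, and the gap between $\alpha$ and $D_2$ guarantees the product is strictly less than one.

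The remaining task is to pass from whatever exact eigenfunction the measure produces to a genuine \emph{piecewise constant} function that is bounded away from $0$ and $\infty$ and supported in a compact admissible $J$, as the statement demands. For this I would exploit that the strict inequality in Theorem~\ref{t:certificate-D2} is an open condition. Starting from the exact (possibly singular or merely measurable) eigen-density, I would regularise in two stages: first average $\psi$ over a short dynamical window (apply $\sum_{n=0}^{N-1}\lambda^{n\alpha}\Ds_{\alpha,\mS}^n$ to a crude initial guess) so that the resulting function inherits a definite strict contraction with margin bounded below uniformly on $J$; then approximate this continuous function from above by a piecewise constant step function on a sufficiently fine partition of $J$ with rational endpoints. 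Because $\Ds_{\alpha,\mS}$ is a positive operator with bounded kernel on the admissible interval, a fine enough piecewise-constant majorant still satisfies $[\Ds_{\alpha,\mS}\psi] < \psi$ by continuity, provided the oscillation introduced by the discretisation is smaller than the contraction margin.

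The main obstacle I anticipate is controlling the lower bound of $\psi$ away from zero simultaneously with the strict decrease, particularly near the endpoints of $J$ and near the origin where $\nu$ may have a cusp: the natural eigen-density can degenerate there, and a piecewise constant approximant that dips too low at some interval would violate either positivity or the strict inequality after one application of $\Ds_{\alpha,\mS}$ (which smears mass across the whole support). Overcoming this requires choosing the contraction margin $D_2(\mu_\lambda) - \alpha$ large enough relative to the discretisation scale and possibly adding a small constant floor to $\psi$ before discretising, then verifying that the floor does not destroy the inequality --- this is where the quantitative gap hypothesis is essential and where the bulk of the estimation work lies.
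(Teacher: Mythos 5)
Your proposal starts from the same objects as the paper --- the autocorrelation measure $\nu=\mu_\lambda*(-\mu_\lambda)$, its self-similarity under the complementary scheme $g_{ij}(t)=\lambda t+(c_i-c_j)$, and the reading of $\Ds_{\alpha,\mS}$ as the transfer operator attached to $\nu$ --- and your final discretisation step (a piecewise-constant majorant whose oscillation is smaller than the contraction margin) is sound and close in spirit to the paper's Propositions~\ref{prop:D21} and~\ref{prop:auxop}. But the core of the argument is missing: you never actually exhibit a function on which $\Ds_{\alpha,\mS}$ acts sub-invariantly, and the one concrete candidate you propose, a mollification of $x\mapsto\nu([-|x|,|x|])$, does not work. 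The operator carries the prefactor $\lambda^{-\alpha}$, and the only way this factor can be absorbed is by pairing $\nu$ with a kernel homogeneous of degree $-\alpha$. The cumulative function has no such homogeneity; concretely, self-similarity gives $\nu([-r,r])=\sum_{i,j}p_ip_j\,\nu\bigl(\bigl[\tfrac{-r-c_i+c_j}{\lambda},\tfrac{r-c_i+c_j}{\lambda}\bigr]\bigr)$, a sum of masses of intervals centred at $(c_j-c_i)/\lambda$, whereas $[\Ds_{\alpha,\mS}\psi](x)$ for your $\psi$ involves masses of intervals centred at the origin with half-length $|x-c_i+c_j|/\lambda$; these are not comparable, so the ``tractable computation'' you appeal to does not exist. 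What the paper uses instead is the Riesz potential $\psi_{\alpha,\mu}(r)=\int\!\!\int|x-y-r|^{-\alpha}\,d\mu(x)\,d\mu(y)=(f_\alpha*\nu)(r)$: the homogeneity $f_\alpha(\lambda t)=\lambda^{-\alpha}f_\alpha(t)$ makes it an \emph{exact fixed point} of $\Ds_{\alpha,\mS}$ (Proposition~\ref{prop:dspsi}, Corollary~\ref{cor:fixedpoint}), and its finiteness and continuity at every $r$ --- needed for ``bounded away from $0$ and $\infty$'' --- is exactly what $\alpha<D_2(\mu_\lambda)$ together with the Fourier-positivity argument of Lemma~\ref{lem:psi} delivers.

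The second gap is the mechanism for strictness: ``the gap between $\alpha$ and $D_2$ guarantees the product is strictly less than one'' is asserted, not proved, and it is not how the paper obtains it. Since $\psi_{\alpha,\mu}$ is a genuine fixed point (eigenvalue $1$, not less), the paper truncates it outside an admissible interval $J$ with a linear taper and then \emph{iterates}: inverse branches are expanding and, by Lemma~\ref{lem:return}, an orbit that leaves an admissible interval never returns, so after finitely many steps every point of $J$ has at least one branch that has escaped, where the truncated function is strictly below $\psi_{\alpha,\mu}$; compactness then yields a uniform margin $(1-\eps)$ (Lemma~\ref{lem:find-D2}), which survives discretisation. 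Your ``gap'' heuristic can in fact be repaired, but only after identifying the right function: pick $\alpha'\in(\alpha,D_2(\mu_\lambda))$ and set $\psi'=f_{\alpha'}*\nu$; then by homogeneity $\Ds_{\alpha,\mS}\psi'=\lambda^{\alpha'-\alpha}\psi'$, an eigenfunction with eigenvalue strictly less than one, and since truncation only decreases the left-hand side, $[\Ds_{\alpha,\mS}(\psi'\mathds{1}_J)]\prec\psi'\mathds{1}_J$ holds on $J$ with uniform margin, after which your majorant argument applies. Either route hinges on the Riesz potential of $\nu$, which is precisely the ingredient absent from your proposal; as written, there is no function to discretise.
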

As the reader will see, the technique  in the proof of Theorem~\ref{t:certificate-D2} 
exploits the fact that the maps are similarities with the same scaling coefficient.
In the next section~\ref{ss:ingen} we generalise the method to study other types of iterated function schemes at the expense
of weaker estimates. 

\subsubsection{General uniformly contracting schemes}
\label{ss:ingen}

Let us denote by~$B_r(x)$ a neighbourhood of a point~$x$ of radius~$r$.   

We will be concerned with iterated function schemes~$\mathcal
T(\bar f, \bar p, J)$, where $J \subset \mathbb R$ is a compact
interval, $\bar f = (f_1, \ldots, f_n)$ is a
finite collection of uniformly contracting $C^{1+\varepsilon}$ diffeomorphisms of~$\mathbb
R$, which preserve the interval~$J$, i.e. $f_j (J) \subset J$ for $1 \le j \le
n$  and $\bar p$ is a probability vector.

Following Hochman~\cite[\S4.1]{H12}, we say that the measure~$\mu$ is \emph{$\alpha$-regular}, if there exists 
a constant $C$ such that for any $r>0$ and any $x$ we have that 
\begin{equation}
    \label{eq:regmeasure}
    \mu(B_r(x))< C r^\alpha.
\end{equation}
One of the examples of $\alpha$-regular measures are Bernoulli convolutions~\cite[Proposition 2.2]{FL09}.  
We introduce the following dimension-type characteristic of a compactly
supported probability measure~$\mu$ on $\mathbb R$, which is sometimes referred
to as the \emph{Frostman dimension}~\cite{FFK20} (in the context of $\mathbb
R^n$) or the \emph{lower Ahlfors dimension} (in the context of general separable
metric spaces).  
It is defined as supremum of the regularity exponents: 
$$
D_1(\mu): =  \sup\{\alpha \mid \exists C: \quad \forall x,r \quad
\mu\left( B_r(x) \right) <Cr^{\alpha} \}. 
$$
\begin{remark}
    We would like to warn the reader that the Frostman dimension doesn't
    satisfy all conditions which a dimension of a measure is expected to satisfy,
    in particular, it is not closed under countable unions. We will see in
    Lemma~\ref{lem:d2d1} that $D_1(\mu) \le D_2(\mu)$ for any probability
    measure~$\mu$. It is not hard to show that it is also a lower bound for the
    packing dimension, as well as other dimensions which can be defined using
    the local dimension. 
\end{remark}

A pair of complementary results, Theorems~\ref{t:certificate-D1} 
and~\ref{t:finding-D1} below allow one to get a lower bound on the
\emph{Frostman} dimension~$D_1(\mu)$ for the stationary measure of an iterated
function scheme~$\mathcal T(\bar f, \bar p, J)$  in terms of an associated linear operator.

\begin{definition}
Given an iterated function scheme $\mathcal T(\bar f, \bar p, J)$ 
for any $\alpha\in (0,1)$ we define the associated \emph{asymmetric diffusion
operator} $\Da_{\alpha,\mathcal T}$ by
\begin{equation}
\Da_{\alpha,\mathcal T}[\psi](x) := \sum_{j=1}^n p_j \cdot
|(f_{j}^{-1})'(x)|^{\alpha}  \cdot \psi(f_j^{-1}(x)).
\label{eq:dif-oneway}
\end{equation}
We consider this operator to be acting on the space of all functions on the
real line, although it preserves nonnegative functions supported on~$J$. 
\end{definition}
\begin{remark}
    Comparing~\eqref{eq:dif-oneway} with~\eqref{eq:dif-twoway} we see that $\Ds_{\alpha,\mS}$ for
    Bernoulli convolution system described in Example~\ref{ex:BCsystem}
    corresponds to the operator~$\Da_{\alpha,\mathcal T}$ for the system of three
    contractions 
    $$
    \mathcal T:=\{f_1(x) = \lambda x - 1, f_2(x)=\lambda x, f_3(x)
    = \lambda x+1\}
    $$ 
    and probability vector $\overline p = (0.25,0.5,0.25)$. 
\end{remark}
By analogy with $\Ds_{\alpha,\mS}$, the operator $\Da_{\alpha,\mS}$
gives us a way to obtain a lower bound for the Frostman dimension. 

We denote by $B_r(J)$ the closed neighbourhood of the interval $J$ of radius~$r$.

\begin{theorem}
    \label{t:certificate-D1}
Assume that for some $\alpha>0$ there exist $r>0$ and a function $\psi:\bbR\to\bbR_+$,
supported on~$B_r(J)$, positive on $B_r(J)$ and bounded away from $0$ and
from infinity on $B_r(J)$, such that  
\begin{equation*}
\forall x\in B_r(J) \quad [\Da_{\alpha,\mathcal T} \psi](x) < \psi(x).
\end{equation*}
Then the measure~$\mu$ is $\alpha$--regular. 
\end{theorem}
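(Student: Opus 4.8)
The plan is to establish the $\alpha$-regularity estimate $\mu(B_r(x)) < Cr^\alpha$ by exploiting the stationarity of $\mu$ together with the supersolution inequality $[\Da_{\alpha,\mathcal T}\psi](x) < \psi(x)$. The natural strategy is to show that the test function $\psi$ serves as a Lyapunov-type certificate controlling how mass of $\mu$ concentrates in small balls. First I would define, for a fixed small radius $r_0>0$, the quantity
\[
M(x) := \frac{\mu(B_{r_0}(x))}{\psi(x)},
\]
or more flexibly a family of such ratios across scales, and then attempt to show that applying one step of the iterated function scheme contracts this ratio by a definite factor. The key observation is that stationarity $\mu = \sum_j p_j (f_j)_*\mu$ lets us decompose the mass $\mu(B_r(x))$ as $\sum_j p_j\,\mu\bigl(f_j^{-1}(B_r(x))\bigr)$, and since each $f_j$ is a $C^{1+\varepsilon}$ contraction, $f_j^{-1}(B_r(x))$ is approximately a ball of radius $r\cdot|(f_j^{-1})'(x)|$ centred at $f_j^{-1}(x)$. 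The factor $|(f_j^{-1})'(x)|^\alpha$ appearing in the definition of $\Da_{\alpha,\mathcal T}$ is precisely what arises when one asks how an $\alpha$-regularity bound transforms under $f_j^{-1}$.

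The main steps I would carry out are as follows. I would set up an inductive or self-referential estimate on the ratio $\mu(B_r(x))/(Cr^\alpha\psi(x))$ as $r$ ranges over a geometric sequence of scales $r_0\eta^m$, where $\eta<1$ is a uniform contraction factor for the maps. Using stationarity to pass from scale $r$ to the scales $r\cdot|(f_j^{-1})'|$, the supersolution inequality guarantees that the weighted sum of the ratios at the preimage points, multiplied by the derivative factors, is strictly dominated by $\psi$ itself; iterating this contraction drives the constant down and prevents the ratio from blowing up as $r\to 0$. Because $\psi$ is bounded away from $0$ and $\infty$ on $B_r(J)$, one can absorb $\psi$ into the constant $C$ at the end, converting a bound on the ratio into the clean statement $\mu(B_r(x))<Cr^\alpha$. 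A boundary/base-case argument is needed for balls of radius comparable to $r_0$ (where regularity is trivial since $\mu$ is a finite measure on a compact set) and for points near the edge of $B_r(J)$, which is why the hypothesis requires the inequality on a genuine neighbourhood $B_r(J)$ rather than on $J$ alone.

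The strict inequality in the hypothesis is what provides the spectral gap that makes the iteration converge, so I would track the uniform gap $\delta := \inf_{x}\bigl(\psi(x) - [\Da_{\alpha,\mathcal T}\psi](x)\bigr) > 0$, which exists by compactness and continuity once $\psi$ is, say, piecewise constant or continuous. The role of the $C^{1+\varepsilon}$ regularity is to control the distortion of $|(f_j^{-1})'|$ across the small ball $B_r(x)$ — the Hölder bound on the derivative ensures that $|(f_j^{-1})'(y)|$ and $|(f_j^{-1})'(x)|$ differ only by a factor $1+O(r^\varepsilon)$ for $y\in B_r(x)$, so the discrepancy between the exact mass transport and the idealized operator $\Da_{\alpha,\mathcal T}$ is a lower-order error that the strict gap $\delta$ can absorb.

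The hard part will be handling this distortion term rigorously and confirming that the accumulated error over infinitely many scales stays summable and strictly smaller than the gap $\delta$; this is where the $C^{1+\varepsilon}$ hypothesis is genuinely used rather than just $C^1$, since one needs $\sum_m (r_0\eta^m)^\varepsilon < \infty$ to bound the total distortion. A secondary technical obstacle is making the preimage balls $f_j^{-1}(B_r(x))$ match up cleanly with the geometric scale grid — since different maps have different derivatives, the preimages land at slightly incommensurate radii, and one must either pass to a comparison with the nearest grid scale (losing only a bounded multiplicative constant) or formulate the induction directly in terms of a continuous parameter $r$ with a supremum over all sufficiently small $r$. I expect the cleanest route is to define $\Phi(r) := \sup_x \mu(B_r(x))/\psi(x)$ and prove a functional inequality of the form $\Phi(r) \lesssim \max_j \Phi\bigl(r\cdot\sup|(f_j^{-1})'|\bigr)$ with a contraction factor strictly below $r^{-\alpha}$ growth, from which $\Phi(r) = O(r^\alpha)$ follows by a standard scaling argument.
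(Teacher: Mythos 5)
Your plan is, at its core, a deterministic unfolding of the same mechanism that the paper exploits probabilistically: stationarity $\mu=\sum_j p_j (f_j)_*\mu$ is iterated along the tree of inverse branches, the supersolution inequality controls the resulting weighted sums, and the $C^{1+\varepsilon}$ hypothesis enters through bounded distortion. The paper packages this as a martingale argument: for the random composition $F_{\underline \omega_n}=f_{\omega_1}\circ\cdots\circ f_{\omega_n}$ the quantity $\xi_n=\mu(F_{\underline \omega_n}^{-1}(I))$ is a martingale, $\eta_n=((F_{\underline \omega_n}^{-1})'(x))^\alpha\,\psi(F_{\underline \omega_n}^{-1}(x))$ is a supermartingale (this is exactly where $\Da_{\alpha,\mathcal T}\psi\preccurlyeq\psi$ is used), and one stops at the first moment the preimage point leaves $B_r(J)$ or the preimage interval reaches length $r/\beta_{max}$. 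Your scale-by-scale induction on $\sup_x \mu(B_\rho(x))/(\rho^\alpha\psi(x))$ could be made to work and would avoid probabilistic language, but as written it has one genuine hole.

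The hole is the treatment of inverse branches that exit $B_r(J)$. Your induction quantity divides by $\psi$, which vanishes off $B_r(J)$, yet a single application of stationarity produces terms $\mu\bigl(f_j^{-1}(B_\rho(x))\bigr)$ whose centres $f_j^{-1}(x)$ may already lie outside $B_r(J)$; for those terms the ratio is infinite or undefined, and the functional inequality $\Phi(\rho)\lesssim\max_j\Phi\bigl(\rho\cdot\sup|(f_j^{-1})'|\bigr)$ is simply not available. The missing idea --- the one step of the paper's proof for which your outline offers no substitute --- is that such terms vanish identically as long as the current scale is still small: since $\supp\mu\subset J$, an interval of radius at most $r$ centred at a point at distance greater than $r$ from $J$ cannot meet $\supp\mu$, so it has zero mass and simply drops from the decomposition. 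This forces the induction to run only at scales $\rho$ small enough that one more inverse step keeps all radii below $r$, with the trivial base case placed at scale comparable to $r/\beta_{max}$ rather than at an arbitrary $r_0$; this is precisely the role of the paper's stopping threshold $|F_{\underline \omega_n}^{-1}(I)|>r/\beta_{max}$ and of its dichotomy (exited while short: $\xi_T=\eta_T=0$; became macroscopic while inside $B_r(J)$: bounded distortion plus the supermartingale bound give $\mathbb P(B)\lesssim\delta^\alpha$ for the ball $I=B_\delta(x)$). Your remark that the hypothesis ``requires a genuine neighbourhood'' gestures at this, but without the zero-mass observation the induction does not close. A secondary, fixable inaccuracy: the uniform gap $\inf_x\bigl(\psi(x)-[\Da_{\alpha,\mathcal T}\psi](x)\bigr)>0$ that you propose to track need not exist, since $\psi$ is not assumed continuous (only bounded away from $0$ and $\infty$). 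Fortunately no gap is needed: the non-strict inequality together with the summability of the one-step distortion errors already yields a bounded multiplicative constant over all scales, and indeed the paper's supermartingale computation uses only $\Da_{\alpha,\mathcal T}\psi\preccurlyeq\psi$.
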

\emr{We now give a simple example to illustrate Theorem~\ref{t:certificate-D1} in action.
\begin{example}
    We may consider an iterated function scheme $\mathcal T$ consisiting of two
    maps $f_1(x) = 0.65x$ and $f_2(x) = 0.6x +1$ with probabilities $p_0 = p_1 = 0.5$. 
    Then for the invariant measure $\mu$ we get $\supp \mu \subset J = [0,2.7]$.
    If we choose a function
    $\psi(x) = 1 - 0.4 |x- 1.25|$ on~$J$ and apply $\mathcal D^{(1)}_{0.35,\mathcal T}$
    we see that $\mathcal D^{(1)}_{0.35,\mathcal T} \psi (x) < \psi(x)$. This
    is illustrated in Figure~\ref{fig:ex2}. Therefore we conclude that 
    the Frostman dimension of the stationary measure of this system 
    is bounded from below by~$0.35$.
    \begin{figure}
        \centerline{\includegraphics{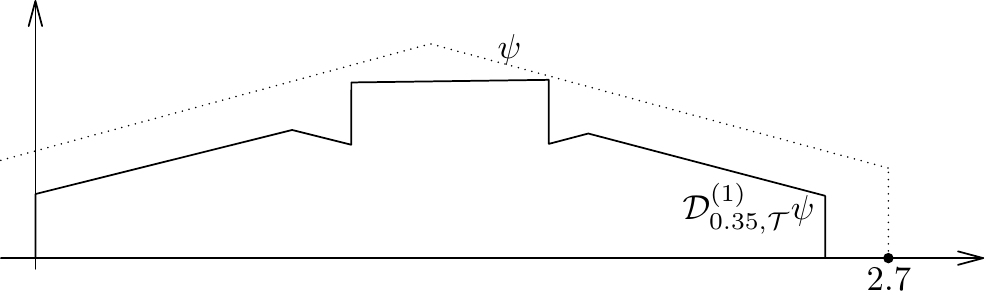}}
        \caption{The image of the function $\psi(x) = 1-0.4|x-1.25|$ is strictly
        smaller than $\psi$.
        }
        \label{fig:ex2}
    \end{figure}
\end{example}}

As in the case of Theorem~\ref{t:finding-D2} for correlation dimension, our next
result states that any lower bound can be found using this method. 
\begin{theorem}\label{t:finding-D1}
Let $\mu$ be the stationary measure of the iterated function scheme $\mathcal
T(\bar f, \bar p, J)$. Then for any $\alpha < D_1(\mu)$ the hypothesis of
Theorem~\ref{t:certificate-D1} holds. In other words there exists a
neighbourhood $B_r(J)$ and a piecewise constant function~$\psi$ with $\supp \psi
\subset B_r(J)$, which is positive and bounded away from 0 and from infinity on
$B_r(J)$, and such that 
\begin{equation*}
\forall x\in B_r(J) \quad [\Da_{\alpha,\mathcal T}\psi](x) < \psi(x).
\end{equation*}
\end{theorem}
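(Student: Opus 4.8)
The plan is to build the test function from a truncated Riesz potential of the stationary measure~$\mu$. Since $\alpha<D_1(\mu)$, I first fix exponents $\alpha<\alpha'<\beta<D_1(\mu)$; by the definition of $D_1$ the measure is $\beta$-regular, so $\mu(B_t(x))\le Ct^{\beta}$ for all $x,t$. I then consider
$$
\Psi(x):=\int |x-y|^{-\alpha'}\,d\mu(y)=\alpha'\int_0^{\infty} \mu(B_t(x))\,t^{-\alpha'-1}\,dt,
$$
the second expression coming from the layer-cake formula. Because $\beta>\alpha'$, the integrand is $O(t^{\beta-\alpha'-1})$ near $0$ and $O(t^{-\alpha'-1})$ near infinity, so $\Psi$ is finite and in fact uniformly bounded above. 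On any compact set it is bounded below by a positive constant (the support of $\mu$ has finite diameter), and $\beta$-regularity also yields continuity of~$\Psi$. Thus $\Psi$ already has the qualitative properties demanded of a test function; only its support (all of~$\bbR$) and its failure to be piecewise constant remain to be corrected. Since $f_j(B_r(J))\subset B_r(J)$ for $r$ small, the operator $\Da_{\alpha,\mathcal T}$ preserves functions supported on $B_r(J)$, so I may work on that compact interval throughout, replacing $\Psi$ by $\psi_0:=\Psi\cdot\mathbf 1_{B_r(J)}$; discarded terms are nonnegative and only help.

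The heart of the matter is the interaction of $\Psi$ with $\Da_{\alpha,\mathcal T}$, extracted from the stationarity identity $\mu=\sum_j p_j (f_j)_*\mu$. Writing $\Psi(x)=\sum_j p_j\int |x-f_j(z)|^{-\alpha'}\,d\mu(z)$ and comparing $|x-f_j(z)|$ with $|f_j^{-1}(x)-z|$ via the mean value theorem, in the model case where every $f_j$ is an exact similarity of ratio $\lambda$ one has $|x-f_j(z)|=\lambda\,|f_j^{-1}(x)-z|$ and hence the exact identity
$$
\Psi(x)=\sum_{j} p_j\,|(f_j^{-1})'(x)|^{\alpha'}\,\Psi(f_j^{-1}(x)).
$$
Since the maps contract, $|(f_j^{-1})'(x)|>1$, and as $\alpha<\alpha'$ we get $|(f_j^{-1})'(x)|^{\alpha}<|(f_j^{-1})'(x)|^{\alpha'}$ termwise; because $\Psi>0$, this turns the identity into the strict inequality $\Da_{\alpha,\mathcal T}\Psi(x)<\Psi(x)$. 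So for systems of similarities (the case relevant to Theorems~\ref{t:main} and~\ref{thm:dim013}, and the analogue of Theorem~\ref{t:finding-D2}) the potential $\Psi$ is already a strict sub-solution, and only truncation and discretisation remain.

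For genuinely nonlinear $C^{1+\varepsilon}$ maps the same computation produces a multiplicative error governed by the bounded-distortion constant $K=\sup_{w}\sup_{a,b\in J}|f_w'(a)/f_w'(b)|$, which is finite by the standard bounded-distortion property but need not be close to~$1$; this is the main obstacle. I would defeat it by iterating. By the chain rule $(\Da_{\alpha,\mathcal T})^m=\Da_{\alpha,\mathcal T^{(m)}}$ for the $m$-fold system $\mathcal T^{(m)}$ of compositions $f_w$, $|w|=m$, for which $\mu$ is still stationary; the distortion stays bounded by the same $K$, while the expansion improves to $|(f_w^{-1})'|\ge \lambda_0^{-m}$, where $\lambda_0<1$ bounds the contraction ratios on $B_r(J)$. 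Applied on $B_r(J)$, where both distortion and contraction are uniform, the analogue of the displayed identity gives
$$
(\Da_{\alpha,\mathcal T})^m\psi_0(x)\le K^{\alpha'}\lambda_0^{\,m(\alpha'-\alpha)}\,\psi_0(x),
$$
and choosing $m$ so large that $K^{\alpha'}\lambda_0^{m(\alpha'-\alpha)}<1$ yields $(\Da_{\alpha,\mathcal T})^m\psi_0<\psi_0$ with a uniform gap. A single-step strict sub-solution is recovered by the standard telescoping device: $\Phi:=\sum_{i=0}^{m-1}(\Da_{\alpha,\mathcal T})^i\psi_0$ satisfies $\Da_{\alpha,\mathcal T}\Phi=\Phi-\psi_0+(\Da_{\alpha,\mathcal T})^m\psi_0<\Phi$, it is supported on $B_r(J)$, and it inherits positivity and boundedness from~$\psi_0$.

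It remains to pass to a piecewise constant function. Since $\Phi$ is continuous and the strict inequality $\Da_{\alpha,\mathcal T}\Phi<\Phi$ holds on the compact set $B_r(J)$, there is a uniform gap $\Phi-\Da_{\alpha,\mathcal T}\Phi\ge\delta>0$. As $\Da_{\alpha,\mathcal T}$ has finite sup-norm operator norm $M=\max_{x\in B_r(J)}\sum_j p_j|(f_j^{-1})'(x)|^{\alpha}$, any piecewise constant $\psi$ with $\|\psi-\Phi\|_\infty<\delta/(2(1+M))$ still satisfies $\Da_{\alpha,\mathcal T}\psi<\psi$ on $B_r(J)$ and remains positive and bounded away from $0$ and infinity. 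This $\psi$, together with the chosen $r$, is the required test function, so the hypothesis of Theorem~\ref{t:certificate-D1} holds.
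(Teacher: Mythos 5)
Your proposal is sound in substance, but it takes a genuinely different route from the paper's. The paper proves Theorem~\ref{t:finding-D1} by combining Proposition~\ref{prop:auxop} with Proposition~\ref{prop:D11}, whose engine is Lemma~\ref{lem:find-D1}: there one shows directly that $\bigl\|(\Da_{\alpha,\mathcal T})^n\mathds{1}_{B_r(J)}\bigr\|_\infty\to 0$, by grouping the words $\underline j_n$ according to the dyadic size of $F_{\underline j_n}(B_r(J))$, bounding the total probability of each group via the $\bar\alpha$-regularity of $\mu$ and the derivative factors via bounded distortion (Lemma~\ref{lem:dist}); the piecewise constant test function is then produced through the discretized operator and the $\hatop{A}$-machinery. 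The paper explicitly remarks that in the general $C^{1+\varepsilon}$ setting the eigenfunction of $\Da_{\alpha,\mathcal T}$ is unknown, and its argument is designed to avoid needing one. You instead manufacture an explicit near-eigenfunction: the Riesz potential $\Psi=f_{\alpha'}*\mu$, which by stationarity is an exact fixed point of $\Da_{\alpha',\mathcal T}$ when the maps are similarities, and which in general satisfies $(\Da_{\alpha,\mathcal T})^m\psi_0\le K^{\alpha'}\lambda_0^{\,m(\alpha'-\alpha)}\psi_0$ on $B_r(J)$ after the mean value theorem and bounded distortion are applied to each composition; iterating beats the fixed distortion constant $K$, and the telescoping sum $\Phi=\sum_{i<m}(\Da_{\alpha,\mathcal T})^i\psi_0$ converts the $m$-step contraction into a one-step strict subsolution. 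This is precisely the one-sided analogue of what the paper does for the correlation dimension, where $\psi_{\alpha,\mu}$ plays this role in Lemma~\ref{lem:find-D2}, so your proof restores the symmetry between the $D_1$ and $D_2$ cases; the paper's counting argument, by contrast, needs no potential at all and feeds directly into the computational framework, since it works with indicator functions (already piecewise constant) from the start. Both arguments consume the same inputs: regularity of $\mu$ with some exponent strictly larger than $\alpha$, and uniform bounded distortion for arbitrary compositions.

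Two points need repair, neither fatal. First, your distortion step uses a mean value point $\xi$ lying between $f_w^{-1}(x)$ and $z\in\supp\mu$, so the uniform distortion bound applies only to those terms with $f_w^{-1}(x)\in B_r(J)$; this is exactly what the truncation to $\psi_0=\Psi\cdot\mathds{1}_{B_r(J)}$ guarantees (all other terms vanish), but the proof should say so, since for the untruncated $\Psi$ the estimate would not be justified. Second, your claim that $\Phi$ is continuous is false: the sharp cutoff gives $\psi_0$ jumps at $\partial B_r(J)$, and these propagate under $\Da_{\alpha,\mathcal T}$ to the interior points $f_w(\partial B_r(J))$. The argument survives nonetheless: the uniform gap $\Phi-\Da_{\alpha,\mathcal T}\Phi=\psi_0-(\Da_{\alpha,\mathcal T})^m\psi_0\ge\bigl(1-K^{\alpha'}\lambda_0^{\,m(\alpha'-\alpha)}\bigr)\inf_{B_r(J)}\Psi>0$ follows from the telescoping identity alone, without any continuity; and $\Phi$ is a finite sum of continuous functions multiplied by indicators of closed intervals, hence a regulated function, and regulated functions on a compact interval are exactly the uniform limits of step functions (put partition points at the finitely many jumps). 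Alternatively, replace $\mathds{1}_{B_r(J)}$ by a continuous trapezoidal cutoff, as the paper does in Proposition~\ref{prop:D11}, and state the final inequality on $B_{r/2}(J)$.
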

This test function can be constructed using the process similar to the one which
is used in the construction of the test function for the
diffusion operator~$\Ds_{\alpha,\mS}$, described in~\S\ref{ss:efbounds-1}.



\section{Dimension of a measure}
In this section we collect together some preparatory material on different dimensions of a measure and their properties.
A good reference for the background reading is a book by Falconer~\cite{F90}.
See also the work by Mattila at al.~\cite{MMR00} for a discussion and comparison of
notions of dimension of a measure.

It is convenient to summarize some useful notation for the sequel.

For any set $X\subset \mathbb R$ we denote by $\mathcal F_X$ the set of
real-valued positive functions, bounded away from zero and from infinity on $X$ and vanishing on $\mathbb R \setminus X$.
We would like to equip the set of functions $\mathcal F_X$ with the
partial order. We write that $f\prec g$ if  $f(x)<g(x)$ for all $x\in X$ and
$f\preccurlyeq g$ if $f(x)\le g(x)$. Given a finite partition $\mathcal X \eqdef
\{X_j, \, j =1,\ldots N\}$, $X = \cup_{j=1}^N X_j$ we denote by $\mathcal F_{\mathcal X}$ the subset of
piecewise-constant functions associated to the partition $\mathcal X$.

Given a collection of $n$ maps $f_j$, $j=1,\ldots, n$ we use a multi-index
notation for composition of~$k$ of them, namely, we denote 
$f_{\underline j_k}: = f_{j_1} \ldots f_{j_k}$, where $\underline j_k
= j_1,\ldots,j_k \in \{1,\ldots,n\}^k$. 

Finally, we denote by $\mathds{1\!}_X$ the indicator function of $X$. 

\subsection{Hausdorff dimension}\label{subsection:dimH}
We briefly recall the definition of the Hausdorff dimension of a set $X \subset \mathbb R$.  
Given $s>0$ and $\delta>0$ we define $s$-dimensional Hausdorff content of~$X$ by
$$
H_\delta^s(X) := \inf 
\left\{
\sum_{i}  (\mathrm{diam}(U_i))^s \mid \{U_i\} \mbox{ is a cover for $X$ and
$\sup_i \{\hbox{\rm diam}(U_i)\} \leq \delta$} 
\right\},
$$
where the supremum is over all countable covers by open sets whose diameter is
at most $\delta$.  We next remove the $\delta$ dependence by defining
$s$-dimensnional Hausdorff measure of $X$ by
$$
H^s(X) := \lim_{\delta \to 0} H_\delta^s(X) \in [0, +\infty].
$$
Finally, we come to the definition of the Hausdorff dimension of the set $X$.
\begin{definition}
\label{def:Hdim}
The {\it Hausdorff dimension} of $X$ is defined by
\begin{equation}
\dim_H(X) := \inf\{ s \geq 0 \mid  H^s(X) =0\}.
\end{equation}
\end{definition}
In particular, the Hausdorff dimension of Borel sets (Definition~\ref{def:Hdim}) is used in the definition of the 
Hausdorff dimension of probability measures  (Definition~\ref{def:HDmeasure}).

\subsection{Correlation dimension}\label{subsection:dimC}
A convenient method to obtain a lower bound on the Hausdorff dimension $\dim_H(\mu)$ is 
a standard technique called \emph{the potential principle} (see~\cite[p.~44]{P})
which allows one to relate the
Hausdorff dimension of a measure and convergence of the integral of powers of the distance
function. 
\begin{definition}
    \label{def:emu}
We define \emph{the energy} of a probability measure~$\mu$ by 
\begin{equation}\label{eq:a-int}
    I(\mu,\alpha) := \int_{\mathbb R}\int_{\mathbb R} (d(x,y))^{-\alpha} \, \mu(dx) \, \mu(dy).
\end{equation}
whenever the right-hand is finite. 
\end{definition}

\begin{definition}\label{def:D2}
The {\it correlation dimension} of the measure $\mu$ is defined by
\begin{equation}
    \label{eq:D2-bis}
    D_2(\mu) = \sup\{\alpha \colon I(\mu,\alpha)< + \infty \}.
\end{equation}
\end{definition}
\noindent This is a special case of the more general $q$-dimensions $D_q(\mu)$
defined analogously~\cite{P93}.

The correlation dimension of~$\mu$ gives a handy lower bound on the Hausdorff
dimension  of~$\mu$. 
\begin{lemma}\label{l:inequality}
$\dim_H(\mu) \geq D_2(\mu)$.
\end{lemma}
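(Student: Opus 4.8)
The plan is to prove that $\dim_H(\mu) \geq D_2(\mu)$ via the energy characterization of dimension, often called the potential-theoretic principle or Frostman's lemma, referenced as the potential principle in the excerpt. The core idea is that finiteness of the $\alpha$-energy $I(\mu,\alpha)$ forces the measure to concentrate no faster than $r^\alpha$ on balls, at least for $\mu$-typical points, which in turn bounds the Hausdorff dimension of any full-measure set from below by $\alpha$. Taking the supremum over all admissible $\alpha$ then yields the inequality.

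First I would fix any $\alpha < D_2(\mu)$, so that by Definition~\ref{def:D2} we have $I(\mu,\alpha) < +\infty$. The next step is to pass from the finite double integral to a pointwise statement about the local behaviour of $\mu$. Define the potential $\phi_\alpha(x) := \int_{\mathbb R} d(x,y)^{-\alpha}\, \mu(dy)$, so that $I(\mu,\alpha) = \int \phi_\alpha(x)\, \mu(dx) < \infty$. Finiteness of this integral implies $\phi_\alpha(x) < \infty$ for $\mu$-almost every $x$. I would then show that at every point $x$ where $\phi_\alpha(x) < \infty$, the lower local behaviour is controlled: for such $x$ one has
\[
\mu(B_r(x)) \cdot r^{-\alpha} \leq \int_{B_r(x)} d(x,y)^{-\alpha}\, \mu(dy) \longrightarrow 0 \quad \text{as } r \to 0,
\]
using that the integrand exceeds $r^{-\alpha}$ on $B_r(x)$ and that the tail integral vanishes by dominated convergence. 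Hence $\liminf_{r\to 0} \mu(B_r(x))/r^\alpha = 0$, which gives an upper bound on the lower local dimension of $\mu$ at $\mu$-a.e.\ point.

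The final step is to convert this local mass estimate into a Hausdorff dimension bound using a standard mass distribution / covering argument. Let $X$ be any Borel set with $\mu(X)=1$. By the previous step, the set $X'$ of points of $X$ where $\phi_\alpha$ is finite still has $\mu(X')=1$, and on $X'$ the measure decays at least like $r^\alpha$ along a sequence of radii. A Vitali-type covering lemma then shows that for any cover $\{U_i\}$ of $X'$ by small sets, $\sum_i (\mathrm{diam}\, U_i)^\alpha$ is bounded below by a positive multiple of $\mu(X') = 1$, so that $H^\alpha(X') > 0$ and therefore $\dim_H(X') \geq \alpha$. Since $X' \subseteq X$, this gives $\dim_H(X) \geq \alpha$, and taking the infimum over all full-measure $X$ yields $\dim_H(\mu) \geq \alpha$. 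Letting $\alpha \uparrow D_2(\mu)$ completes the proof.

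I expect the main obstacle to be the covering argument in the last step: one must carefully extract a good subcover and relate $\sum (\mathrm{diam}\, U_i)^\alpha$ to the total mass, handling the $\liminf$ (rather than a uniform bound) on the mass decay. This is the technical heart, and the cleanest route is to invoke the standard mass distribution principle: if $\mu$ is a measure with $\mu(X) > 0$ and there exist $c>0$, $\delta>0$ with $\mu(U) \leq c\,(\mathrm{diam}\, U)^\alpha$ for all sets $U$ of diameter at most $\delta$, then $H^\alpha(X) \geq \mu(X)/c > 0$. The delicate point is that the energy bound only gives such an upper mass bound along subsequences of radii and $\mu$-almost everywhere rather than uniformly, so I would either restrict to a subset on which the bound holds uniformly (by an exhaustion argument, discarding a set of arbitrarily small measure) or phrase the mass distribution principle in its local form. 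Everything else is routine application of finiteness of $I(\mu,\alpha)$.
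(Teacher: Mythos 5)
Your proposal is correct and follows essentially the same route as the paper, which does not reprove the lemma but simply invokes this standard potential-theoretic principle by citation (Falconer~\cite[Theorem~4.13]{F90} for sets, Mattila~\cite[\S 8]{M92} for measures) --- your sketch is precisely the argument behind those references. One remark: the ``delicate point'' you anticipate at the end is not actually an obstacle, since $\mu(B_r(x)) \leq \phi_\alpha(x)\, r^\alpha$ holds for \emph{every} $r>0$ (not merely along subsequences), so restricting to the sets $\{x : \phi_\alpha(x) \leq M\}$, whose measure tends to $1$ as $M \to \infty$, immediately yields the uniform mass bound needed for the mass distribution principle.
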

\begin{proof}
 The principle  involved is described, for instance, in a book by
 Falconer~\cite[Theorem~4.13]{F90} for sets or in a book by
 Mattila~\cite[\S8]{M92} for measures. 
\end{proof}

The following simple result turns out to be very fruitful. 
\begin{corollary}\label{l:integral}
    If for a Borel probability measure $\mu$ and $0 < \alpha < 1$ the energy
    $I(\mu, \alpha)$ is finite, then $\dim_H(\mu) \geq \alpha$.
%
%
\end{corollary}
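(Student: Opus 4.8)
The plan is to derive Corollary~\ref{l:integral} directly from Lemma~\ref{l:inequality} by showing that the finiteness of the energy $I(\mu,\alpha)$ forces $D_2(\mu) \ge \alpha$, after which the inequality $\dim_H(\mu) \ge D_2(\mu)$ finishes the argument. The key observation is that the correlation dimension is defined in~\eqref{eq:D2-bis} as a supremum over those exponents for which the energy integral converges, so the statement is almost tautological once one unwinds the definitions carefully.

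First I would recall that by Definition~\ref{def:D2} we have $D_2(\mu) = \sup\{\beta : I(\mu,\beta) < +\infty\}$. The hypothesis gives us one specific value, namely $\alpha$, for which $I(\mu,\alpha) < +\infty$. Hence $\alpha$ belongs to the set over which the supremum is taken, and therefore $D_2(\mu) \ge \alpha$ immediately. I would then invoke Lemma~\ref{l:inequality}, which asserts $\dim_H(\mu) \ge D_2(\mu)$, and chain the two inequalities to conclude $\dim_H(\mu) \ge D_2(\mu) \ge \alpha$, as required.

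The only place where care is genuinely needed is the subtlety at the endpoint of the supremum. Because $D_2(\mu)$ is defined as a supremum, it is conceivable that $I(\mu,\beta)$ is finite for all $\beta < D_2(\mu)$ but infinite exactly at $\beta = D_2(\mu)$; this is why the hypothesis is stated with finiteness of $I(\mu,\alpha)$ rather than a strict inequality $\alpha < D_2(\mu)$. The membership argument above sidesteps this issue entirely: we never claim $\alpha < D_2(\mu)$, only that $\alpha$ lies in the set whose supremum defines $D_2(\mu)$, which yields $D_2(\mu) \ge \alpha$ without any strictness concern. I expect this endpoint bookkeeping to be the main, and in fact the only, point requiring attention; the rest is a one-line composition of known facts. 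The role of the restriction $0 < \alpha < 1$ is merely to keep us in the regime where the energy integral is the relevant tool (so that the exponent is comparable to a genuine dimension in $(0,1)$), and it does not otherwise enter the logical structure of the proof.
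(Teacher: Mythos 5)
Your proposal is correct and is essentially the paper's own (implicit) argument: the corollary is stated without proof precisely because, as you observe, finiteness of $I(\mu,\alpha)$ places $\alpha$ in the set whose supremum defines $D_2(\mu)$ in Definition~\ref{def:D2}, so $D_2(\mu)\ge\alpha$, and Lemma~\ref{l:inequality} gives $\dim_H(\mu)\ge D_2(\mu)\ge\alpha$. Your handling of the endpoint issue (membership in the set rather than a strict inequality) is exactly the right bookkeeping.
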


\begin{remark}
    Developing the method proposed in~\cite{HKPS19} it is possible to
    show~\cite{KV2021} that the strict inequality $\dim_H(\mu_\lambda) > D_2(\mu_\lambda)$ 
    holds, for example, for some Pisot values of parameter~$\lambda$ both in the case
    of Bernoulli convolutions as described in Example~\ref{ex:BCsystem} and in
    the case of $\{0,1,3\}$-system as described in Example~\ref{ex:013}. 
\end{remark}

We now would like to recall that a convolution of a continuous function~$f$ and
a probability measure~$\mu$ is a function given by $(f*\mu) (x)=
\int_{\bbR}f(x-z)
d \mu(z)$. This fact brings us to introducing the last dimension notion we
discuss in this work. 

\subsection{Frostman dimension}
\label{ss:pem}
Since this notion is not very well known we will begin by introducing it. 
\begin{definition}
\label{def:D1}
Let us fix a function $f_\alpha(r)=|r|^{-\alpha}$. Let $\mu$ be a compactly
supported probability measure. We define its \emph{Frostman dimension} by
\begin{align}
D_1(\mu)& =  \sup\{\alpha \hbox{ : } \exists C: \quad \forall x,r \quad
\mu(B_{r}(x))<Cr^{\alpha} \} \tag{\ref{def:D1}.1} \label{eq:D1.1} \\ 
& =  \sup\{\alpha \hbox{ : } \exists C: \quad \forall x \quad  (f_\alpha
*\mu)(x) <C \} \tag{\ref{def:D1}.2} \label{eq:D1.2}  \\ 
& =  \sup\{\alpha \hbox{ : } \text{the convolution } \, f_\alpha *\mu \text{ is a continuous function} \}. 
\tag{\ref{def:D1}.3} \label{eq:D1.3} 
\end{align}
\end{definition}
\begin{remark}
    It is easy to see that three expressions for $D_1$ give the same
    value. Indeed, it follows from the Chebyshev inequality that for any
    $\alpha$ and $C$ such that  $(f_\alpha *\mu)(x) <C$ we have that
    $\mu(B_{r}(x))<Cr^{\alpha}$, so~\eqref{eq:D1.2} implies~\eqref{eq:D1.1}. 
    
    Since $\supp\mu$ is a compact set, the convolution $(f_\alpha * \mu)(x)
    \to 0$ as $x \to \infty$. Therefore if the function $f_\alpha * \mu$ is continuous,
    it is also bounded. Hence~\eqref{eq:D1.3} implies~\eqref{eq:D1.2}. 
    
Finally, let us show that if $\mu(B_{r}(x))<Cr^{\alpha}$ for some $\alpha$ and
$C$, then for any $\alpha^\prime<\alpha$ we have that $f_{\alpha^\prime} * \mu$
is continuous. Indeed, for any $x$ and $\varepsilon>0$ we have an asymptotic
estimate
\begin{equation}
    \label{eq:eps-conv}
\int_{B_{\eps}(x)} |x-y|^{-\alpha'} \,
d\mu(y) \le \int_0^{\eps} r^{-\alpha'} d(Cr^{\alpha}) = \frac{\alpha
C}{\alpha-\alpha'}\cdot \eps^{\alpha-\alpha'} \to 0 \quad \mbox{ as }\eps\to 0. 
\end{equation}
On the other hand, the convolution of $\mu$ with the function
$\bar f_{\alpha^\prime}(r)=\max(|r|,\eps)^{-\alpha^\prime}$ is a convolution of a probability measure with a
continuous bounded function and hence is continuous. It follows
from~\eqref{eq:eps-conv} that these convolutions converge uniformly to
$f_{\alpha}*\mu$, and therefore the latter is everywhere finite and 
continuous as a uniform limit of continuous functions. Thus~\eqref{eq:D1.1}
implies~\eqref{eq:D1.3}.
\end{remark}

It is also not difficult to see that the Frostman dimension is not
larger than the correlation dimension.

\begin{lemma}
    \label{lem:d2d1}
For any compactly supported probability measure $\mu$,
\begin{equation}\label{eq:D1D2}
D_1(\mu)\le D_2(\mu).
\end{equation}
\end{lemma}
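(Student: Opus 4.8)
The plan is to prove the inequality by showing that every regularity exponent below $D_1(\mu)$ is also admissible for the energy integral defining $D_2(\mu)$. Concretely, I would fix an arbitrary $\alpha' < D_1(\mu)$ and choose $\alpha$ with $\alpha' < \alpha < D_1(\mu)$; by the definition of $D_1$ as a supremum of regularity exponents, $\mu$ is $\alpha$-regular, so there is a constant $C$ with $\mu(B_r(x)) < Cr^{\alpha}$ for all $x$ and all $r>0$. (It is convenient first to record that $\alpha$-regularity is inherited by all smaller positive exponents: for $r\le 1$ one has $r^{\alpha}\le r^{\alpha'}$, while for $r\ge 1$ the bound $\mu(B_r(x))\le 1 < r^{\alpha'}$ is automatic, so the set of regularity exponents is a subinterval of $(0,\infty)$ with supremum $D_1(\mu)$.) The goal is then to deduce $I(\mu,\alpha')<+\infty$, which forces $\alpha'\le D_2(\mu)$; letting $\alpha'\uparrow D_1(\mu)$ gives the claim.

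The main step is to bound the potential $U(x)\eqdef\int_{\mathbb R} d(x,y)^{-\alpha'}\,d\mu(y)$ by a constant independent of $x$. The cleanest route is the layer-cake formula. Since $\alpha>0$ forces $\mu$ to be non-atomic, one has $U(x)=\alpha'\int_0^\infty \mu(B_r(x))\,r^{-\alpha'-1}\,dr$ after the change of variables $r=t^{-1/\alpha'}$. Splitting at $r=R$, where $R$ bounds the diameter of $\supp\mu$, the tail on $[R,\infty)$ has integrand at most $r^{-\alpha'-1}$ and integrates to a finite constant because $\alpha'>0$, while on $(0,R]$ the regularity estimate gives $\mu(B_r(x))<Cr^{\alpha}$, so the contribution is at most $\alpha'C\int_0^R r^{\alpha-\alpha'-1}\,dr$, which converges precisely because $\alpha-\alpha'>0$. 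Both bounds are uniform in $x$, so $U(x)\le M$ for some finite $M$. (An equivalent argument decomposes the domain into dyadic shells $\{y:2^{-(k+1)}<|x-y|\le 2^{-k}\}$ and sums the geometric series of ratio $2^{-(\alpha-\alpha')}<1$.)

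To conclude, I would integrate the uniform bound against $\mu$, obtaining
\[
I(\mu,\alpha')=\int_{\mathbb R} U(x)\,d\mu(x)\le M<+\infty,
\]
so $\alpha'$ is admissible in the definition of $D_2$ and hence $\alpha'\le D_2(\mu)$. As $\alpha'<D_1(\mu)$ was arbitrary, this yields $D_1(\mu)\le D_2(\mu)$.

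The only genuinely delicate point, and the one I expect to be the \emph{main obstacle}, is the behaviour of the integrand near the diagonal $y=x$, where $d(x,y)^{-\alpha'}$ blows up. The whole argument turns this singularity into the convergent integral $\int_0 r^{\alpha-\alpha'-1}\,dr$, and this is exactly where the strict gap $\alpha'<\alpha$ (equivalently $\alpha'<D_1(\mu)$) is essential — it also explains why one cannot hope to reach $\alpha'=D_1(\mu)$ directly and must pass to the supremum at the end. Everything else — non-atomicity from $\alpha$-regularity, the change of variables in the layer-cake formula, and the uniform-in-$x$ tail estimate using compactness of $\supp\mu$ — is routine.
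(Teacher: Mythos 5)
Your proof is correct, but it takes a different route than the paper's own proof of Lemma~\ref{lem:d2d1}. The paper defines $D_1$ in three equivalent forms (Definition~\ref{def:D1}), and its proof of the lemma is essentially two lines built on form~\eqref{eq:D1.2}: whenever the potential $(f_\alpha*\mu)(x)=\int|x-y|^{-\alpha}\,d\mu(y)$ is uniformly bounded, one has $I(\mu,\alpha)=\int_{\bbR}(f_\alpha*\mu)\,d\mu<+\infty$, so the set of exponents with bounded potential is contained in the finite-energy set and the two suprema compare directly, with no loss in the exponent. You instead start from the ball-mass form~\eqref{eq:D1.1}, re-derive the uniform potential bound via the layer-cake formula at a strictly smaller exponent $\alpha'<\alpha$, and recover the loss by letting $\alpha'\uparrow D_1(\mu)$; the final step (integrating the bounded potential against $\mu$) is the same as the paper's. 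Your layer-cake computation is, in substance, the same estimate as the paper's~\eqref{eq:eps-conv} in the Remark following Definition~\ref{def:D1}, where the implications \eqref{eq:D1.1}$\Rightarrow$\eqref{eq:D1.3}$\Rightarrow$\eqref{eq:D1.2} are proved; so your argument effectively inlines that equivalence rather than citing it. What your route buys is a self-contained proof from the most natural (Frostman/ball) characterization, making explicit that the strict gap $\alpha'<\alpha$ is exactly what tames the singularity at the diagonal; what the paper's route buys is brevity, since once the equivalence of the three characterizations is in hand, the lemma reduces to the observation that a bounded potential at exponent $\alpha$ already yields finite energy at that same $\alpha$.
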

\begin{proof}
    Let us consider the function $f_\alpha(r) = |r|^{-\alpha}$. Then for any
    $\alpha$ such that the convolution $f*\mu$ is bounded, one has that
\[
I(\mu,\alpha)=\int_\bbR (f_\alpha*\mu)(x) d\mu(x) < + \infty.
\]
Therefore 
\[
\{\alpha \mid \exists C: \quad \forall x \quad  (f_\alpha *\mu)(x) <C \} \subset 
\{\alpha \mid I(\mu,\alpha)<+\infty \},
\]
and the desired inequality~\eqref{eq:D1D2} follows.
\end{proof}

\section[Computing lower bounds]{Computing uniform lower bounds on dimension}
\label{s:efbounds}
We begin by modifying the diffusion operator and Theorem~\ref{t:certificate-D2}
in preparation for  computer-assisted proofs of Theorems~\ref{t:main},~\ref{thm:salemnum}
and~\ref{thm:dim013}. 

\subsection{Extension to open set of parameters}
\label{ss:efbounds-2}

  We keep the notation of Section~\ref{subsection:ifs}. 
  Let $\mS(\lambda, \bar c, \bar p)$ be an iterated function scheme of~$n$ similarities with the same scaling
  coefficient~$\lambda$ and probability vector~$\bar p$.

  Given $0<\alpha<1$ and a subset $\Lambda \subset [0,1]$ let $J$ be a
  $\Lambda$-admissible interval and let $\mathcal J = \{J_1,\ldots,J_N\}$ be a 
  partition of $J$. The modified  diffusion operator we introduce below
  preserves the subspace $\mathcal F_{\mathcal J}$ of piecewise constant functions associated to
  the partition~$\mathcal J$.
  
  
  \begin{definition}
  We define a finite rank nonlinear diffusion operator 
  $ \mathcal D_{\alpha,\Lambda;\mathcal J}: \mathcal F_{\mathcal J} \to  \mathcal
  F_{\mathcal J}$ by 
  \begin{equation}
  \label{eq:D-Lambda}
   \mathcal D_{\alpha,\Lambda, \mathcal J} \psi |_{J_k} = (\inf \Lambda)^{-\alpha}
   \sum_{i,j=1}^n p_i p_j  \sup_{x\in J_k, \, \lambda\in\Lambda} \psi \left( \frac{x-
   c_i + c_j }\lambda \right),
   \quad 1 \le k \le N.
  \end{equation}
  \end{definition}
  We see directly from definition that for any $\lambda \in \Lambda$ we have
  that for any $\psi \in \mathcal F_J$, 
  $$
  \Ds_{\alpha,\mS} \psi \preccurlyeq \mathcal
  D_{\alpha,\Lambda,\mathcal J} \psi.
  $$
  The following adaptation of Theorem~\ref{t:certificate-D2} to the
  operator~$\mathcal D_{\alpha,\Lambda,\mathcal J}$ follows immediately. 
  \begin{theorem}
      \label{thm:5star}
   Assume that for some $\alpha>0$ and a set $\Lambda\subset[0,1]$ 
   there exists an admissible interval~$J_\Lambda$, its 
   partition~$\mathcal J$, and a piecewise-constant function $\psi \in \mathcal F_{\mathcal J}$ which is
   positive and bounded away from~$0$ and from infinity on~$J_\Lambda$, such that  
    \begin{equation}
        \label{eq:D2-hat-test}
        \mathcal D_{\alpha,\Lambda,\mathcal J}\psi \preccurlyeq \psi.
    \end{equation}
    Then for any $\lambda \in \Lambda$ the correlation dimension of the
    $\mS_\lambda$-stationary measure $\mu_\lambda$ is bounded from below by~$\alpha$:
    \[
    D_2(\mu_\lambda)\ge \alpha.
    \]
\end{theorem}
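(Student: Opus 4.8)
The plan is to reduce Theorem~\ref{thm:5star} directly to Theorem~\ref{t:certificate-D2}, which has already established the lower bound $D_2(\mu_\lambda) \ge \alpha$ from the strict inequality $[\Ds_{\alpha,\mS}\psi](x) < \psi(x)$ for the original (linear) diffusion operator. The key observation, which is stated in the excerpt just before the theorem, is the domination $\Ds_{\alpha,\mS}\psi \preccurlyeq \mathcal D_{\alpha,\Lambda,\mathcal J}\psi$ valid for every $\lambda \in \Lambda$. So the entire content of the proof is to chain this domination with the hypothesis~\eqref{eq:D2-hat-test} and then appeal to the already-proved single-parameter result.

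First I would fix an arbitrary $\lambda \in \Lambda$ and note that the interval $J_\Lambda$, being $\Lambda$-admissible, is in particular $\lambda$-admissible, so it is a legitimate admissible interval for the scheme $\mS(\lambda,\bar c,\bar p)$. Next I would combine the two inequalities: for every $x \in J_\Lambda$,
\[
[\Ds_{\alpha,\mS}\psi](x) \le [\mathcal D_{\alpha,\Lambda,\mathcal J}\psi](x) \le \psi(x),
\]
where the first inequality is the domination valid for $\lambda \in \Lambda$ and the second is the hypothesis~\eqref{eq:D2-hat-test}. The one subtlety is that Theorem~\ref{t:certificate-D2} requires the \emph{strict} inequality $[\Ds_{\alpha,\mS}\psi](x) < \psi(x)$, whereas the chain above only yields a weak inequality, since~\eqref{eq:D2-hat-test} is stated with $\preccurlyeq$.

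I anticipate this strictness gap to be the main (indeed the only) obstacle, and it is resolved by a small perturbation argument rather than any genuinely new idea. Since $\psi$ is bounded away from $0$ on $J_\Lambda$ and the operator $\mathcal D_{\alpha,\Lambda,\mathcal J}$ depends continuously on $\alpha$ through the factor $(\inf\Lambda)^{-\alpha}$, decreasing $\alpha$ slightly to some $\alpha' < \alpha$ strictly decreases the value of the operator applied to the fixed function $\psi$ while leaving $\psi$ itself unchanged. Hence for $\alpha'$ close enough to $\alpha$ the weak inequality~\eqref{eq:D2-hat-test} becomes the strict inequality $[\mathcal D_{\alpha',\Lambda,\mathcal J}\psi](x) < \psi(x)$, and therefore $[\Ds_{\alpha',\mS}\psi](x) < \psi(x)$ for all $x \in J_\Lambda$ and all $\lambda \in \Lambda$. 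Theorem~\ref{t:certificate-D2} then gives $D_2(\mu_\lambda) \ge \alpha'$ for each such $\alpha'$, and taking the supremum over $\alpha' \nearrow \alpha$ yields $D_2(\mu_\lambda) \ge \alpha$. Since $\lambda \in \Lambda$ was arbitrary, the conclusion holds uniformly over $\Lambda$, which is exactly the assertion of the theorem.
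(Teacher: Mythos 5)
Your proposal is correct and follows essentially the same route as the paper: the paper derives Theorem~\ref{thm:5star} ``immediately'' from the domination $\Ds_{\alpha,\mS}\psi \preccurlyeq \mathcal D_{\alpha,\Lambda,\mathcal J}\psi$ (valid for every $\lambda\in\Lambda$) combined with Theorem~\ref{t:certificate-D2}. Your extra perturbation step $\alpha'\nearrow\alpha$ to convert $\preccurlyeq$ into the strict inequality required by Theorem~\ref{t:certificate-D2} is a legitimate and careful way to close the strict-versus-weak gap that the paper glosses over (its supermartingale proof of Theorem~\ref{t:certificate-D2} in fact only uses the weak inequality), noting only that where the operator value is $0$ the strict inequality holds trivially from $\psi>0$ rather than from the strict decrease of the scaling factor.
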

\emr{  We can illustrate the principle with the following example. 
  \begin{example}
      In the setting of Bernoulli convolution with $\lambda = 0.75$
      we may choose an interval $\Lambda = [\lambda - 10^{-8}, \lambda+10^{-8}]$ and  $J =
      [-4.1, 4.1]$. Applying the operator $\mathcal D_{0.2, \Lambda, \mathcal J}$ to a function  
      \begin{align*}
      \psi(x) = 0.15 \cdot
      \mathds{1}_{[-4,4]}+0.1 \cdot \mathds{1}_{[-2.7,2.7]} &+
      0.15 \cdot \mathds{1}_{[-2.1,2.1]} + 0.1\cdot \mathds{1}_{[-1.6,1.6]} + 
       0.125\cdot \mathds{1}_{[-1.5,1.5]} \\ &+ 0.1 \cdot \mathds{1}_{[-1.2,1.2]} +
      0.125 \cdot \mathds{1}_{[-0.5,0.5]} + 0.15\cdot \mathds{1}_{[-0.25,0.25]},
      \end{align*}
      depicted in Figure~\ref{fig:t5star}, we get that $\mathcal D_{0.2, \Lambda, 
      \mathcal J} \psi \prec \psi$ and conclude that $\dim_H \mu_\lambda > 0.2$
      for all $\lambda \in \Lambda$.
      \begin{figure}
          \centerline{\includegraphics{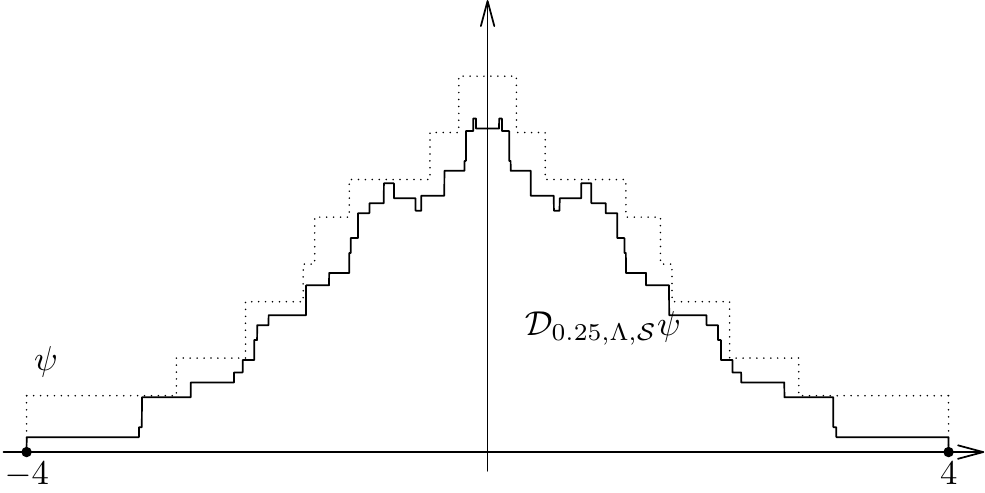}}
          \caption{The image of the piecewise constant function~$\psi$ is
          strictly smaller than~$\psi$.}
          \label{fig:t5star}
      \end{figure}
  \end{example}
}

  Therefore in order to show that $D_2( \mu_\lambda) \ge \alpha$ for all
  $\lambda \in \Lambda$, it is sufficient to find an $\Lambda$-admissible
  interval $J_\Lambda$, its partition $\mathcal J$, and a piecewise constant
  function~$\psi$ associated to $\mathcal J$, with $\psi|_{J_\Lambda} >
  0$ such that $\mathcal D_{\alpha,\Lambda,\mathcal J} \psi \preccurlyeq \psi$ 
  and then apply Theorem~\ref{thm:5star}. 
  \begin{remark}
  Furthermore, by refining the partition in the construction of the operator
 $\mathcal D_{\alpha,\Lambda,\mathcal J}$ and choosing smaller intervals
 $\Lambda$, in the limit we obtain the correlation dimension.
 In particular, this implies a well-known fact that the correlation dimension is 
 lower semicontinuous. 
 \end{remark}

\subsection{Constructing the test function}
\label{ss:efbounds-1}

The construction of a suitable test function~$\psi$ which satisfies the hypothesis of
Theorem~\ref{t:certificate-D2} is based on the following general result for
linear operators. 
\begin{notation}
    Given a linear operator $A$ acting on real-valued functions and a small
    number $\vartheta > 0$ we introduce 
    \begin{equation}
        \label{eq:hatop}
     [\hatop{A} f](x) :=\min([Af](x)+\vartheta,f(x)).  
 \end{equation}
    Observe that if $A$ preserves the subset of positive functions, then $\hatop
    A $ also does so. Furthermore, if $A$ preserves the subspace of continuous
    functions, then $\hatop A$ preseves this subspace too. 
\end{notation}

We say that an operator $A \colon \mathcal F_J \to \mathcal F_J$ is monotone, if
for any $f, g \in \mathcal F_J$ such that $g \preccurlyeq f$ we have that $A g
\preccurlyeq Af$. Note that we don't require the operator~$A$ to be linear in
the definition of monotone. 

We will need the following easy general statement.
\begin{lemma}
    \label{l:min}
Let~$A\colon \mathcal F_X\to\mathcal F_X$ be a monotone operator, and let
$\vartheta>0$ be a real number. Assume that for some function $f\in \mathcal
F_X$ and for some~$x_0 \in X$ we have that $[\hatop A f](x_0) = [Af](x_0)+\vartheta$. 
Then for any~$k \ge 1 $ 
\[
[A\hatop{A}^k f](x_0)+\vartheta\le [\hatop{A}^kf](x_0).
\]
\end{lemma}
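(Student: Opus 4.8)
The plan is to argue by induction on $k$, leaning on two elementary properties of the operator $\hatop{A}$. Write $f_k := \hatop{A}^k f$, so that $f_0 = f$. Directly from the definition $[\hatop{A} g](x) = \min([Ag](x) + \vartheta, g(x))$ we have $\hatop{A} g \preccurlyeq g$ for every $g \in \mathcal{F}_X$; iterating, the sequence $(f_k)$ is pointwise non-increasing, $\cdots \preccurlyeq f_{k+1} \preccurlyeq f_k \preccurlyeq \cdots \preccurlyeq f_0$. The second property is that, since $A$ is assumed monotone, $f_{k+1} \preccurlyeq f_k$ forces $A f_{k+1} \preccurlyeq A f_k$, and in particular $[A f_{k+1}](x_0) \le [A f_k](x_0)$.

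It is convenient first to reformulate the target inequality. The claim $[A f_k](x_0) + \vartheta \le f_k(x_0)$ says precisely that the minimum defining $[\hatop{A} f_k](x_0)$ is attained by its first argument, i.e.\ that
\[
f_{k+1}(x_0) = [\hatop{A} f_k](x_0) = [A f_k](x_0) + \vartheta.
\]
Both the hypothesis (at level $0$) and the assertion we wish to prove (at each level $k$) thus have the same shape, which is what makes the induction run. For the base case $k = 1$, the hypothesis gives $[A f](x_0) + \vartheta \le f(x_0)$, so $f_1(x_0) = [A f](x_0) + \vartheta$; combining this with the monotonicity bound $[A f_1](x_0) \le [A f](x_0)$ yields $[A f_1](x_0) + \vartheta \le [A f](x_0) + \vartheta = f_1(x_0)$, which is the desired inequality for $k = 1$.

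For the inductive step, suppose the conclusion holds at level $k$, i.e.\ $[A f_k](x_0) + \vartheta \le f_k(x_0)$. By the reformulation above, this identifies $f_{k+1}(x_0) = [A f_k](x_0) + \vartheta$ exactly. Applying monotonicity to $f_{k+1} \preccurlyeq f_k$ gives $[A f_{k+1}](x_0) \le [A f_k](x_0)$, whence
\[
[A f_{k+1}](x_0) + \vartheta \le [A f_k](x_0) + \vartheta = f_{k+1}(x_0),
\]
which closes the induction. There is no serious obstacle in this argument; the only point requiring care is the order of operations at each step: one must first use the (inductive) hypothesis to pin down $f_{k+1}(x_0)$ as the exact value $[A f_k](x_0) + \vartheta$, and only then invoke monotonicity, since it is this equality — rather than the mere inequality $f_{k+1} \preccurlyeq f_k$ — that converts the abstract monotonicity bound into the quantitative estimate we need.
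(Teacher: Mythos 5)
Your proof is correct and takes essentially the same route as the paper's: induction on $k$, where the inductive hypothesis pins down $[\hatop{A}^{k+1}f](x_0)=[A\hatop{A}^{k}f](x_0)+\vartheta$ (the minimum is attained at the first argument), and monotonicity of $A$ applied to $\hatop{A}^{k+1}f\preccurlyeq\hatop{A}^{k}f$ then closes the step. Your explicit reformulation of the claim at each level is exactly the observation the paper uses implicitly, so there is nothing to add.
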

\begin{proof}
It sufficient to show that the statement holds for $k=1$. Then the result
follows by induction. By definition of $\hatop A$ we have that $\hatop{A}f \preccurlyeq f$.
Together with monotonicity of $A$ it implies that
\[
[A \hatop A f](x_0)+\vartheta \le [Af](x_0)+\vartheta = [\hatop{A} f ](x_0).
\]
For the inductive step, let us assume that 
\[
[A \hatop{A}^k f](x_0)+\vartheta \le [\hatop{A}^k f ](x_0).
\]
Then $[\hatop{A}^{k+1}f](x_0) = [A\hatop{A}^kf](x_0)+\vartheta$ and therefore using monotonicity
and the fact that $\hatop A f \preccurlyeq f$ we get 
$$
[A \hatop{A}^{k+1} f](x_0) + \vartheta \le [A \hatop{A}^k f](x_0) + \vartheta =
[\hatop{A}^{k+1} f]  (x_0).
$$
\end{proof}

\begin{proposition}
    \label{prop:auxop}
Let $J \subset \mathbb R$ be a closed interval. Let~$A\colon \mathcal F_J\to
\mathcal F_J$ be a monotone operator.  Let $\vartheta>0$ be an arbitrarily small
real number. 
Assume that for some $n>0$ and $f \in \mathcal F_J$ we have that ${\hatop A}^n f \prec f$. Then 
$$
A {\hatop A}^{n} f \preccurlyeq {\hatop A}^{n} f.
$$  
\end{proposition}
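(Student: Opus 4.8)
The plan is to write $g_k := \hatop{A}^k f$ for $0 \le k \le n$, so that $g_0 = f$ and $g_n = \hatop{A}^n f$ is the function in the conclusion. Since $\hatop{A} h \preccurlyeq h$ for every $h$ by definition, these iterates form a pointwise non-increasing chain $g_n \preccurlyeq g_{n-1} \preccurlyeq \cdots \preccurlyeq g_0 = f$. The target inequality $A \hatop{A}^n f \preccurlyeq \hatop{A}^n f$ is pointwise, so I would fix an arbitrary $x_0 \in J$ and prove $[A g_n](x_0) \le g_n(x_0)$.

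The idea is to exploit the structure of the minimum in $g_{k+1}(x_0) = \min([A g_k](x_0) + \vartheta,\, g_k(x_0))$. At each step either the ``first branch'' is taken, giving $g_{k+1}(x_0) = [A g_k](x_0) + \vartheta$, or the ``second branch'' is taken, giving $g_{k+1}(x_0) = g_k(x_0)$. I would let $m$ be the largest index in $\{0, \ldots, n-1\}$ at which the first branch is selected at $x_0$, i.e. $g_{m+1}(x_0) = [A g_m](x_0) + \vartheta$. The crucial point, and the place where the strict hypothesis $\hatop{A}^n f \prec f$ is indispensable, is that such an $m$ must exist: if the first branch were never selected along the orbit of $x_0$, then $g_{k+1}(x_0) = g_k(x_0)$ for every $k$, forcing $g_n(x_0) = f(x_0)$ and contradicting $g_n(x_0) < f(x_0)$. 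By maximality of $m$ the second branch is taken at every later step, so the value stabilises: $g_n(x_0) = g_{m+1}(x_0) = [A g_m](x_0) + \vartheta$.

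With $m$ in hand, the conclusion follows from Lemma~\ref{l:min}. Indeed $[\hatop{A} g_m](x_0) = g_{m+1}(x_0) = [A g_m](x_0) + \vartheta$, so the lemma applies with starting function $g_m$ and index $k = n - m \ge 1$, yielding $[A \hatop{A}^{n-m} g_m](x_0) + \vartheta \le [\hatop{A}^{n-m} g_m](x_0)$. Since $\hatop{A}^{n-m} g_m = \hatop{A}^n f = g_n$, this reads $[A g_n](x_0) + \vartheta \le g_n(x_0)$, hence $[A g_n](x_0) < g_n(x_0)$ with a margin of $\vartheta$. As $x_0 \in J$ was arbitrary, $A g_n \preccurlyeq g_n$ as required (in fact the strict inequality $A \hatop{A}^n f \prec \hatop{A}^n f$ holds). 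I expect the main obstacle to be the ``second branch'' case, namely points where the final application of $\hatop{A}$ left the value unchanged: there $[A g_n](x_0) \le g_n(x_0)$ does not follow from monotonicity alone, and it is precisely the back-tracking to the index $m$, combined with Lemma~\ref{l:min} and the strictness of the hypothesis, that resolves it.
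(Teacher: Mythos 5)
Your proof is correct and follows essentially the same route as the paper: both arguments fix $x_0 \in J$, use the strictness of ${\hatop A}^n f \prec f$ to locate an index $m$ at which the ``$+\vartheta$'' branch of the minimum is active (the paper picks any index of strict decrease, you pick the largest index where that branch is taken, a harmless variation), and then invoke Lemma~\ref{l:min} with starting function ${\hatop A}^m f$ and $k = n-m$ to conclude $[A{\hatop A}^n f](x_0)+\vartheta \le [{\hatop A}^n f](x_0)$. The maximality of $m$ and the stabilisation observation are not actually needed, but they do no harm.
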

\begin{proof}
    By definition of $\hatop A$ for any function $f \in \mathcal
    F_J$ we have $\hatop{A}f\preccurlyeq f$. Since $\hatop{A}$ is monotone, we
    deduce that 
     \begin{equation*}
    {\hatop A}^n f \preccurlyeq {\hatop A}^{n-1}f \preccurlyeq \cdots \preccurlyeq f.
    \end{equation*}
    Since for every $x\in J$ we have $[\hatop{A}^n f](x) < f(x)$, then there
    exists $0\le m(x) \le n-1$ such that a strict inequality $[{\hatop A}^{m(x)+1}
    f](x)<[{\hatop A}^{m(x)} f](x)$ holds. Therefore   
    \[
    [A{\hatop A}^{m(x)} f](x)+\vartheta\le  [{\hatop A}^{m(x)} f](x).
    \]
    Applying Lemma~\ref{l:min} to the function ${\hatop A}^{m(x)} f$ with
    $k=n-m(x)$, we get 
    \[
    [A{\hatop A}^n f](x)+\vartheta\le  [{\hatop A}^n f](x),
    \]
    and the result follows. 
\end{proof}

\medskip 
Our numerical results are based on the following Corollaries, which follow
immediately from Theorem~\ref{t:certificate-D2}
and~Theorem~\ref{thm:5star}. 
\begin{corollary}
  \label{cor:single}
  If there exists and admissible interval $J_\lambda$ such that Proposition~\ref{prop:auxop}
  holds for $A := \Ds_{\alpha,\mS(\lambda)}$ and $ f:= \mathds{1}_{J_\lambda}$, then
   $D_2(\mu_\lambda) \geq \alpha$.
\end{corollary}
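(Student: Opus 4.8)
The plan is to manufacture, directly from the hypothesis, a test function $\psi$ meeting every requirement of Theorem~\ref{t:certificate-D2}, and then to invoke that theorem. The obvious candidate is the very function produced by Proposition~\ref{prop:auxop}. So I would set $A := \Ds_{\alpha,\mS(\lambda)}$, take $f := \mathds{1}_{J_\lambda}$, fix a small $\vartheta > 0$, and record that, by hypothesis, Proposition~\ref{prop:auxop} applies: there is an integer $n > 0$ with $\hatop{A}^n f \prec f$, and hence $A\hatop{A}^n f \preccurlyeq \hatop{A}^n f$. I then define $\psi := \hatop{A}^n f$ and verify $\psi \in \mathcal F_{J_\lambda}$.

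The membership checks are routine inheritances from the construction. The diffusion operator $\Ds_{\alpha,\mS}$ has nonnegative coefficients, so it—and therefore $\hatop{A}$, by the Notation defining $\hatop{A}$—maps nonnegative functions to nonnegative functions; since $f=\mathds{1}_{J_\lambda}$ is positive on $J_\lambda$ and $\hatop{A}g=\min(Ag+\vartheta,g)$ adds the strictly positive $\vartheta$ before taking the minimum, iterating keeps $\psi$ strictly positive on $J_\lambda$. At the same time $\hatop{A}g \preccurlyeq g$ gives $\psi \preccurlyeq f=\mathds{1}_{J_\lambda}$, so $\psi$ is bounded above by $1$ and vanishes off the compact admissible interval $J_\lambda$, i.e. $\supp\psi \subset J_\lambda$. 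Being the output of the operator $\hatop{A}$ acting on $\mathcal F_{J_\lambda}$, $\psi$ is bounded away from $0$ and from infinity, so $\psi \in \mathcal F_{J_\lambda}$.

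The one delicate point—and essentially the only obstacle—is that Theorem~\ref{t:certificate-D2} demands the \emph{strict} inequality $[\Ds_{\alpha,\mS}\psi](x) < \psi(x)$, whereas the \emph{statement} of Proposition~\ref{prop:auxop} yields only $A\psi \preccurlyeq \psi$. This is precisely where the auxiliary parameter $\vartheta$ earns its keep: the \emph{proof} of Proposition~\ref{prop:auxop} in fact establishes the sharper pointwise bound $[A\psi](x)+\vartheta \le \psi(x)$ for every $x \in J_\lambda$, and since $\vartheta > 0$ this gives $[A\psi](x) < \psi(x)$ throughout $J_\lambda$. I would flag reading this strict bound out of the proof (rather than the weaker stated conclusion) as the step requiring care.

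Finally, with $\psi \in \mathcal F_{J_\lambda}$ positive and bounded away from $0$ and $\infty$ on the compact admissible interval $J_\lambda$, with $\supp\psi \subset J_\lambda$, and satisfying $[\Ds_{\alpha,\mS}\psi](x) < \psi(x)$ for all $x \in J_\lambda$, every hypothesis of Theorem~\ref{t:certificate-D2} holds verbatim. Applying it yields $\dim_H(\mu_\lambda) \ge D_2(\mu_\lambda) \ge \alpha$, and in particular $D_2(\mu_\lambda) \ge \alpha$, which is the claim.
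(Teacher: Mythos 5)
Your proof is correct and takes essentially the same route as the paper, which derives this corollary immediately from Theorem~\ref{t:certificate-D2} by using $\psi := \hatop{A}^n\mathds{1}_{J_\lambda}$ as the test function, exactly as you do. Your flagged ``delicate point'' --- that the strict inequality $[\Ds_{\alpha,\mS}\psi](x) < \psi(x)$ must be read from the \emph{proof} of Proposition~\ref{prop:auxop} (which actually establishes $[A\psi](x)+\vartheta \le \psi(x)$) rather than from its stated conclusion $A\psi \preccurlyeq \psi$ --- is a legitimate gap-fill of a step the paper leaves implicit; note also that the paper's own proof of Theorem~\ref{t:certificate-D2} only ever uses the non-strict inequality (the supermartingale property), so the corollary would survive even without this refinement.
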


We use the last proposition in order to find a suitable test function $\psi$ for 
Theorem~\ref{thm:5star}.   
\begin{corollary}
  \label{cor:uniform}
  If there exist an admissible interval~$J_\Lambda$ 
  and its partition $\mathcal J$ such that Proposition~\ref{prop:auxop} holds
  for $A := {\mathcal D}_{\alpha,\Lambda,\mathcal J}$ and $f := \mathds{1}_{J_\Lambda} $
  then 
  $D_2(\mu_\lambda) \geq \alpha$ for all   $\lambda \in \Lambda$.
\end{corollary}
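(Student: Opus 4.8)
The plan is to combine Proposition~\ref{prop:auxop} with Theorem~\ref{thm:5star} in the most direct possible way, treating the nonlinear operator $A := \mathcal D_{\alpha,\Lambda,\mathcal J}$ exactly as the abstract monotone operator in the proposition. First I would verify that $\mathcal D_{\alpha,\Lambda,\mathcal J}$ meets the hypotheses of Proposition~\ref{prop:auxop}: it maps $\mathcal F_{\mathcal J} \to \mathcal F_{\mathcal J}$ by construction, and it is monotone, since from~\eqref{eq:D-Lambda} the value on each $J_k$ is a nonnegative-coefficient combination of suprema of $\psi$, and both the supremum and the nonnegative combination preserve the order $\preccurlyeq$. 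This is the only structural check needed, and it is routine.

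Given that, the hypothesis of the corollary is precisely that Proposition~\ref{prop:auxop} holds for this $A$ and the starting function $f := \mathds{1}_{J_\Lambda}$. I would then set $\psi := \hatop{A}^{n} f$, where $n$ is the exponent furnished by the proposition. The conclusion of Proposition~\ref{prop:auxop} gives immediately
\[
\mathcal D_{\alpha,\Lambda,\mathcal J}\, \psi \preccurlyeq \psi,
\]
which is exactly the inequality~\eqref{eq:D2-hat-test} required by Theorem~\ref{thm:5star}. The remaining task is to confirm that this $\psi$ is an admissible test function in the sense of that theorem, i.e.\ that it is piecewise constant (it lies in $\mathcal F_{\mathcal J}$ because $\mathcal F_{\mathcal J}$ is preserved by $A$ and by the $\min$ in~\eqref{eq:hatop}), and that it is positive and bounded away from $0$ and $\infty$ on $J_\Lambda$. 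Boundedness above is clear since $\psi \preccurlyeq f = \mathds{1}_{J_\Lambda}$; positivity and boundedness away from $0$ should follow from the construction, as the proposition's hypothesis ${\hatop A}^n f \prec f$ is a strict inequality on a finite partition, so $\psi$ takes finitely many strictly positive values on $J_\Lambda$.

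With these facts assembled, I would invoke Theorem~\ref{thm:5star} with this $\psi$ to conclude that $D_2(\mu_\lambda) \ge \alpha$ for all $\lambda \in \Lambda$, which is the assertion of the corollary. I expect the proof to be essentially a one-line deduction once the monotonicity of $\mathcal D_{\alpha,\Lambda,\mathcal J}$ is noted; the corollary is explicitly advertised as following \emph{immediately} from the two cited results. The only point requiring a moment's care, and hence the main (mild) obstacle, is checking that $\psi = \hatop{A}^n f$ genuinely belongs to the function class $\mathcal F_{\mathcal J}$ demanded by Theorem~\ref{thm:5star} rather than merely satisfying the inequality formally; in particular one wants the lower bound away from $0$ on the support, which is why starting from the indicator $\mathds{1}_{J_\Lambda}$ and using finitely many iterations is convenient, since finiteness of the partition guarantees a positive minimum value.
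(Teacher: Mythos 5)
Your proposal is correct and takes essentially the same route as the paper, which states that Corollary~\ref{cor:uniform} follows immediately by feeding $\psi := \hatop{A}^{n}\mathds{1}_{J_\Lambda}$, supplied by Proposition~\ref{prop:auxop}, into Theorem~\ref{thm:5star}. The structural checks you spell out (monotonicity of $\mathcal D_{\alpha,\Lambda,\mathcal J}$, preservation of $\mathcal F_{\mathcal J}$, and positivity with a lower bound away from~$0$ coming from the $\vartheta$-term together with the finiteness of the partition) are precisely the routine verifications the paper leaves implicit.
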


    Note that Corollary~\ref{cor:single} can only be applied to rational
    parameter values $\lambda \in \mathbb Q$, which can be represented in
    computer memory exactly. In order to study irrational parameter values, such
    as Pisot or Salem numbers, we need to apply Corollary~\ref{cor:uniform} to
    a tiny interval $\Lambda$ with rational endpoints containig the irrational
    parameter value we would like to study.


\subsection[Practical implementation]{Practical implementation: computing lower bounds for $D_2(\mu)$}

The following method, based on Corollaries~\ref{cor:single}
and~\ref{cor:uniform}, can be used to obtain a lower bound on the correlation
dimension of a stationary measure of an iterated function scheme of
similarities. 

\subsubsection{Verifying a conjectured value}
\label{sss:verifyalpha}
First let us assume that we would like \emph{to check} whether $\alpha$ is a lower
bound for the correlation dimension of
an iterated function scheme of similarities $\mS(\lambda,\bar c, \bar p)$ for
an open set of parameter values $\lambda \in \Lambda$. Then we proceed as
follows:
\begin{enumerate}
  \item Fix an admissible interval $J_\Lambda$, associated to $\Lambda$.
  \item Choose a partition $\mathcal J$ of the interval $J$ consisting of $N$ intervals of the
      same length. From the point of view of efficiency of the practical
      implementation, it is better to choose the length of the intervals of the
      partition to be comparable with $|\Lambda|$. In our computations, we often take $N$
      so that  
      $$
        \frac12 N |\Lambda| \le |J| \le 2 |\Lambda| N.
      $$
  \item Introduce an operator $A:=\mathcal D_{\alpha,\Lambda,\mathcal J}$. 
  \item Take a piecewise-constant function $\mathds{1}_J$ and $\vartheta>0$ and compute
      the images ${\hatop A}^n \mathds{1}_J$, which are
      piecewise-constant functions associated to the partition $\mathcal J$.  
  \item If we find $n_0$ so that ${\hatop A}^{n_0}
      \mathds{1}_J\prec \mathds{1}_J$, then we conclude that $D_2(\mu_\lambda) > \alpha$.
\end{enumerate}

\emr{We can give a simple example to illustrate the method.}
\begin{example}
\emr{    Let us show that for the Bernoulli convolution measure with $\lambda \in
    (0.74,0.76)$ we have $D_2(\mu_\lambda) > 0.75$. 
    The corresponding iterated function scheme consists of two maps and
    probability vector $\bar p$ given by 
    $$
    f_0(x)= \lambda x, \quad f_1(x) = \lambda x+1; \qquad \bar p = \Bigl(
    \frac12,\frac12\Bigr). 
    $$
    Then $\supp \mu_{\lambda} \subseteq [0,4.17]$ and we can choose an admissible
    interval~$J = [-4.5,4.5]$. We can also consider a uniform partition of $J$
    consisting of $120$ intervals.    }

\emr{    We then choose $\theta=10^{-3}$, and compute the images of $\mathds{1}_J$
    under $\hatop A$ for $A = \mathcal  D_{0.75,\Lambda,\mathcal J}$. 
    It turns out that~$25$ iterations is sufficient, in particular we have that $\max 
    {\hatop A}^{25} \mathds{1}_J < 0.99$.  
    This is illustrated in Figure~\ref{fig:simple}.
    }

  \begin{figure}[h]
  \centerline{
  \includegraphics{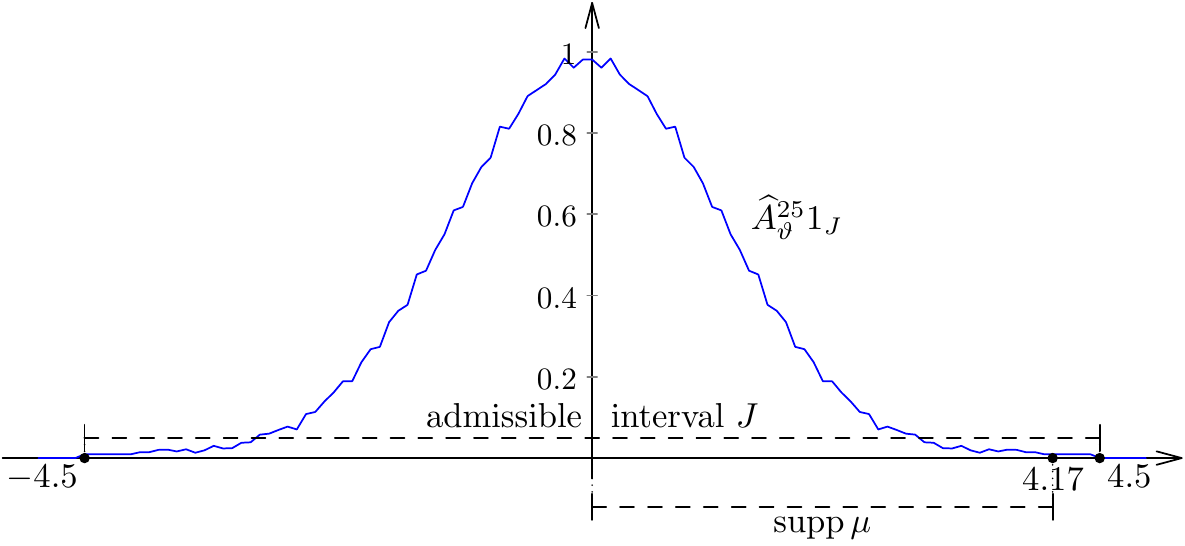}
  }
  \caption{The function ${\hatop A}^{25} \mathds{1}_J\prec\mathds{1}_J$ 
  for $A = \mathcal D_{0.75,\Lambda,\mathcal J}$ and $\vartheta = 10^{-3}$, 
  $\supp \mu_{0.75}$, and an
  admissible interval~$J$.}
  \label{fig:simple}
  \end{figure}

\end{example}


\emr{When it comes to the realisation of an iterative method in practice one of
the common concerns is accumulation of the rounding error. The following remark 
        explains why in the present case this is not a significant issue.}

\begin{remark}
\emr{Applying the operator $\widehat A_\vartheta$ 
        to a given piecewise-constant function $\psi$ changes its value on one of the intervals 
        only if the \emph{computed} value $A_{\mathrm{comp}}[\psi]$ is below the actual value of $\psi$ 
        on this interval by at least $\vartheta$, otherwise the value stays completely 
        unchanged.} 
        
\emr{        Assume that the rounding errors never exceed $\frac{1}{2}\vartheta$; this is 
        quite reasonably the case, for instance, since we typically chose $\vartheta = 10^{-8}$, 
        while making the calculations with quadruple precision, in other
        words, the numbers involved have $32$ significant digits. }
        
\emr{        Then, the computed image $\widehat A_{\vartheta,\mathrm{comp}}[\psi]$ is lower-bounded by 
        a true image with half the ``added value'':
        \[
        	\widehat A_{\vartheta/2}[\psi] \preccurlyeq \widehat A_{\vartheta,\mathrm{comp}}[\psi].
        \]
        By induction, it is then easy to see that after an \emph{arbitrary} number $n$ of iterations one has 
        \[
        	\widehat A_{\vartheta/2}^n[\psi] \preccurlyeq \widehat A_{\vartheta, \mathrm{comp}}^{n}[\psi].
        \]
        Thus, if after some number $n$ of iterations the \emph{computations}
        provide $A_{\vartheta, \mathrm{comp}}^{n}[\mathds{1}_J] \prec
        \mathds{1}_J$, where $A=\Ds_{\alpha,\mS(\lambda)}$, one actually gets
        the desired $\widehat A_{\vartheta/2}^n[\mathds{1}_J] \prec
        \mathds{1}_J$ and hence the applicability of Corollary~\ref{cor:single}.
        In our computations, to avoid problems with the strict inequality
        handling, we have been asking for the inequality 
        \[
        (\widehat{\Ds_{\alpha,\Lambda,\mathcal{J}}})_{\vartheta,
        \mathrm{comp}}^{n}[\mathds{1}_J] \prec 0.995\cdot \mathds{1}_J.
        \]
}        
\end{remark}

\emr{Next, we should describe what we would do if an attempt to obtain a lower bound by a given $\alpha$ was unsuccessful.}

\begin{remark}
    \label{rem:droplam}
It is possible that after a large number of iterations $n$, there exists an
interval $\mathcal J_k$ of the partition $\mathcal J$ where we have the
equality: 
$$
[ {\hatop A }^{n}\mathds{1}_J] |_{\mathcal J_k} = 1. 
$$
Then we cannot reach a definitive conclusion, as there are three possibilities:
\begin{enumerate}
    \item the number of iterations $n$ is not large enough,
    \item the intervals of the partition $\mathcal J$ are too long, or the
        interval $\Lambda$ is too long,
    \item the number $\alpha$ is not a lower bound, i.e.
        there exists $\lambda \in \Lambda$ such that $D_2(\mu_\lambda) \le \alpha$. 
\end{enumerate}
In this case we could try to increase the number of iterations, to choose a
finer partition, or to drop the conjectured value $\alpha$ to $\alpha^\prime <
\alpha$ and to consider the operator $A :=\mathcal
D_{\alpha^\prime,\Lambda,\mathcal J}$. It follows from
Proposition~\ref{prop:D21} that provided our
guess on the lower bound was correct, we will be able to justify it using this
approach, subject to computer resources. 
\end{remark}

Our method can be used not only \emph{to verify a suggested lower bound}, but
also to find a lower bound for the correlation dimension or to improve an
existing lower bound. 

\subsubsection{Computing a lower bound} 
\label{sss:compute}
Assume that we would like to improve an existing lower bound $d_1 <
D_2(\mu_\lambda)$ 
using no more than $N$ iterations 
of the operator and piecewise constant functions with no more than $K$
intervals. Additionally assume that there is an upper bound
$D_2(\mu_\lambda)<d_2$.

Then we can fix $\varepsilon>0$, a uniform partition $\mathcal J$, an operator 
$A :=\mathcal D_{\alpha,\Lambda,\mathcal J}$ and to search
for an $\alpha$ satisfying the following conditions 
\begin{enumerate}
    \item\label{it:left} there exists $k<N$: ${\hatop A}^{k} \mathds{1}_J
        \prec \mathds{1}_J$
    \item\label{it:right} for all $k  \le N$: ${\hatop A}^{k} \mathds{1}_J
      \not\prec \mathds{1}_J$
\end{enumerate}
One approach to find~$\alpha$ would be to apply the well known bisection method to
the interval $(d_1,d_2)$. However, to obtain good estimates, one has to
allow for a large number of iterations before dropping the conjectured lower
bound and this is very time-consuming. In other words, negative answer is
expensive as we have to examine all possibilities described in
Remark~\ref{rem:droplam}.

It is therefore more efficient to use a partition of the interval $(d_1,d_2)$ into
$M\!: =\![\sqrt{(d_2\!-\!d_1)\varepsilon^{-1}}]$ intervals of equal length
and to test the values 
$\alpha_k = d_1+k\cdot \frac{1}M  $, $k = 1,\ldots,M$ using the method explained in the
previous subsection~\ref{sss:verifyalpha}. We then want to find a $0 \le k \le M$ such that 
for $A_k :=\mathcal D_{\alpha_k,\Lambda,\mathcal  J}$ there exists $n < N$ with
the property
$$
{\widehat A_{k,\vartheta}}^{n} \mathds{1}_J
\prec \mathds{1}_J \qquad \mbox{ and } \qquad 
{\widehat A_{k+1,\vartheta}}^{N} \mathds{1}_J
\not\prec \mathds{1}_J. 
$$
Then we repeat the procedure again, dividing the interval  $\left(\alpha_k,
\alpha_{k+1}\right)$ into $M$ intervals of length $\varepsilon$. This way would need to apply all $N$
iterations only twice (to confirm the second condition~\ref{it:right}) to find the desired value~$\alpha$.    

Finally, we note that in order to compute a good lower bound on a
large interval of parameter values, we consider \emph{a cover} of this interval
by a large number of small overlapping intervals and compute a lower bound on
each of them. 

\begin{remark}
    We would like to emphasize that the value $\alpha+\varepsilon$ is not an
    upper bound for correlation dimension, as it depends on the number~$K$ of
    iterations allowed and the number of intervals for the space of piecewise
    constant functions, and might increase (together with $\alpha$) when we increase those values.  
\end{remark}
\begin{definition}
  We call~$\varepsilon$ the \emph{refinement} parameter. 
\end{definition}

In \emr{Subsection~\ref{sss:mainproof}} we give details of the application of our method to
computing lower bound for correlation dimension of Bernoulli convolutions.


\emr{We conclude this subsection by presenting the following alternate approach
to the use of the diffusion operator, that was suggested to us by an anonymous
referee. This suggestion is particularly helpful, and we are glad to be able to present it
here:} 
\begin{remark}
\emr{For a given sufficiently small interval $\Lambda$ of values of $\lambda$ one can: 
\begin{itemize}
\item Take the initial function $\psi_0=\mathds{1}_J$, sufficiently small
    $\vartheta$, initial lower bound $\alpha_0$ and a threshold $t\in (0,1)$; 
\item Apply the operator ${\hatop A}$, where $A=D_{\alpha_0,\Lambda,\mathcal
    J}$, until the maximum descends below the chosen threshold. Let $k$ be the
    smallest number such that  
    \begin{equation}
        \label{eq:psik}
\psi:={\hatop A}^{k}\mathds{1}_J \prec t \cdot \mathds{1}_J.
\end{equation}
In particular, as $t<1$, this implies that ${\hatop A}[\psi]\preccurlyeq\psi$;
\item Then, one gets a lower bound $\alpha$ for the correlation dimension,
    choosing its value to be the maximal for which $D_{\alpha,\Lambda,\mathcal  J}[\psi]\preccurlyeq \psi$, by setting 
\begin{equation}\label{eq:new-alpha}
\alpha:=\alpha_0+ \log_{\lambda_{\min}} \max_J \frac{D_{\alpha_0,\Lambda,\mathcal  J}[\psi]}{\psi}= 
\log_{\lambda_{\min}}  \max_J \frac{D_{0,\Lambda,\mathcal  J}[\psi]}{\psi},
\end{equation}
where $\lambda_{\min}=\min \Lambda$. Observe that since one can actually use
\emph{any} function $\psi$ in order to look for the lower bound $\alpha$
(applying then Theorem~\ref{thm:5star}), rounding errors during the iterations
are not much of an issue. Indeed, there is only \emph{one} iteration and one
division applied in~\eqref{eq:new-alpha}, with the denominator bounded from below by $\vartheta$ due the
construction of~$\psi$, and compared to the precision of calculations
$\vartheta$ is not a small number at all. 
\end{itemize}
}
\end{remark}
\emr{This method really works quite well. For instance, taking
$\vartheta=10^{-7}$ and $\alpha_0=0.82$ (lower bound by Hare and Sidorov),
taking $\Lambda$ to be $0.5\cdot 10^{-5}$-neighborhood of some $\lambda$ and
separating the interval $J=[-r,r]$ into $4\cdot 10^4$ intervals, where
$r=\frac{1.1}{1-\lambda_{\max}}$, $\lambda_{\max}=\max \Lambda$, and choosing
the threshold $t=\frac{1}{20}$ (that is quite small so that $k$
in~\eqref{eq:psik} is quite large), one gets the estimates 
\begin{itemize}
\item $\alpha = 0.9923757365$ for the Fibonacci value $\lambda = 0.6180339887$
    (compare with the lower bound $0.992395833333$ in
    Table~\ref{tab:multinacci});
\item $\alpha = 0.9642020738$ for the tribonacci value $\lambda = 0.54368901$
    (compare with the lower bound $0.964214555664$ in
    Table~\ref{tab:multinacci});
\item $\alpha = 0.999641567$ for one of the Salem numbers $\lambda = 0.71363917$
    (compare with the lower bound $0.999687500$ in the table in Section~\ref{ap:salem}).
\end{itemize}
}

\subsubsection{Proof of Theorem~\ref{t:main}}
\label{sss:mainproof}

Recall that 
$\dim_H \mu_\lambda \ge \dim_H \mu_{\lambda^2}$ and therefore it is sufficient to
compute a lower bound for $\lambda \in [0.5,0.8]$.   

To obtain a uniform lower bound on the correlation dimension $D_2(\mu_\lambda)$,
for Bernoulli convolution measures $\mu_\lambda$ on the entire interval of
parameter values
$[0.5,0.8]$ we proceed in two steps. First, we consider a
cover of the interval $[0.5,0.8]$ by $100$ overlapping intervals of the
same size. We then apply
the method explained in~\S\ref{sss:compute} with $N = 7 \cdot 10^6$ partition
intervals for the test function, and set the maximum for the number of iterations
of the diffusion operator to $K=150$. We choose a lower bound $d_1 =
0.82$, an upper bound $d_2 = 1$ and set the refinement parameter to $\varepsilon = 0.01$. 
The computation takes about 10 minutes for each interval and can be done in
parallel; the result is presented in Table~\ref{tab:lowerbound}.  

\begin{table}[h] 
\begin{center}
\begin{tabular}{|ccc||ccc|}
  \hline
  $\Lambda$ &  & $\alpha$ & $\Lambda$ &  & $\alpha$ \\ 
  \hline
$[0.500  ,0.515]$&& $0.96612$&     $[0.566  ,0.569]$&& $0.96612$\\
$[0.515  ,0.518]$&& $0.95402$&     $[0.569  ,0.614]$&& $0.97822$\\
$[0.518  ,0.542]$&& $0.96370$&     $[0.614  ,0.617]$&& $0.96612$\\
$[0.542  ,0.545]$&& $0.95402$&     $[0.617  ,0.743]$&& $0.97580$\\
$[0.545  ,0.554]$&& $0.96612$&     $[0.743  ,0.800]$&& $0.96612$\\
$[0.554  ,0.566]$&& $0.97580$&                      && \\ 
\hline
\end{tabular}
\end{center}
  \caption{Uniform lower bounds for the correlation dimension of Bernoulli
  convolution measures, after the first step.}
  \label{tab:lowerbound}
\end{table}
Afterwards, we use the bounds we computed as an initial guess for the corresponding
parameters $\lambda$ and improve them by applying the same method again. This
time, based on the first estimates, we take uniform covers of $[0.499,0.575]$
and $[0.572,0.8]$ by $5000$ intervals each. We then use   
$N = 10^7$ intervals for the space of piecewise-constant functions; 
set the maximum $K = 1000$ for the number of iterations for the diffusion operator; and choose $\varepsilon =
10^{-4}$ as the refinement parameter. This second computation takes about two
weeks with 32 threads running in parallel.

The result is presented in Figure~\ref{fig:plotBC}. 
In support of the conjecture that dimension drops occur at Pisot parameter
values, we identify minimal polynomials of algebraic numbers which seem to
correspond to the bigger drops and verified that they are Pisot values, i.e. all their
Galois conjugates lie inside the unit circle. 
\begin{remark} 
It follows from the \emph{overlaps conjecture} by Simon~\cite{Si96} which was proved by
Hochman~\cite{hochman} for algebraic parameter values, that the
dimension drop occurs only for the roots of polynomials with
coefficients~$\{-1,0,1\}$. We see that some of the polynomials indicated in the
plot have~$\pm2$ among their coefficients. 
This doesn't contradict the result of Hochman, because the polynomials we give
are the minimal polynomials. Each of the polynomials with coefficients $\pm2$ 
becomes a polynomial with coefficients $\{-1, 0, 1\}$ after multiplying by 
an appropriate factor. For instance, $x^5-x^3-2x^2-2x-1$ after multplying by $(x-1)$ becomes
$x^6-x^5-x^4-x^3+x+1$.

\end{remark}
\begin{figure}
   \begin{subfigure}{150mm}
   \includegraphics{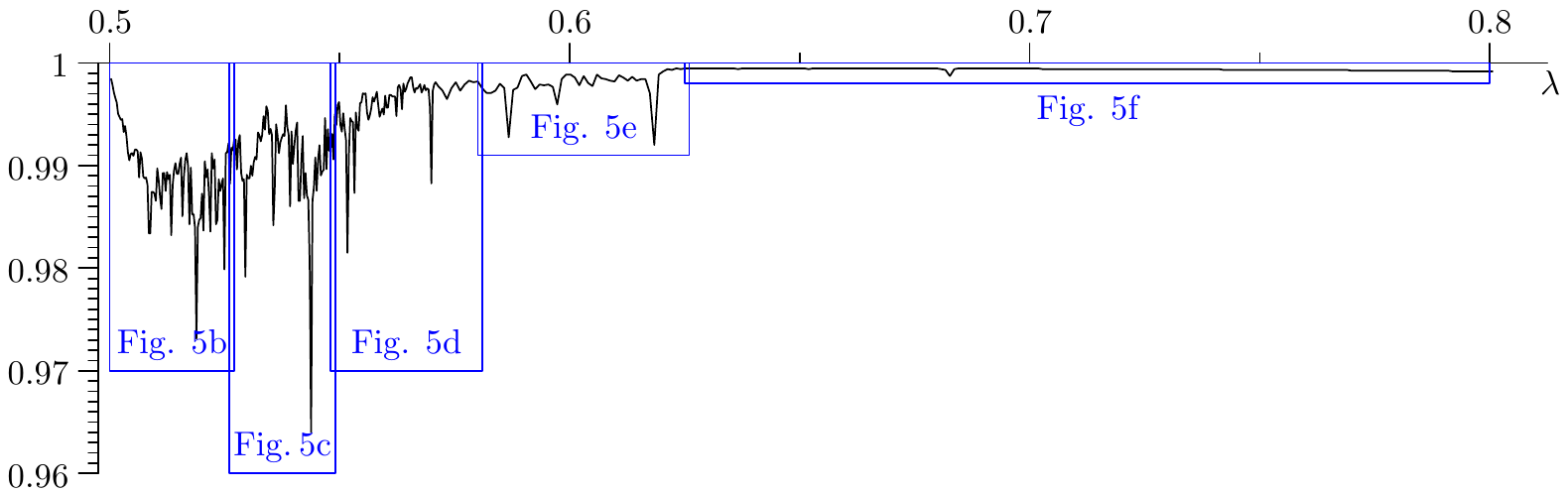}
   \caption{Plot of lower bounds for correlation dimension of Bernoulli
   convolution measures for $0.5\le \lambda \le 0.8$ with more detailed plots
   in~\ref{fig:bc1},~\ref{fig:bc2}, \ref{fig:bc3},~\ref{fig:bc4},~\ref{fig:bc5}
   below.} 
   \end{subfigure}

   \begin{subfigure}{150mm}
   \includegraphics{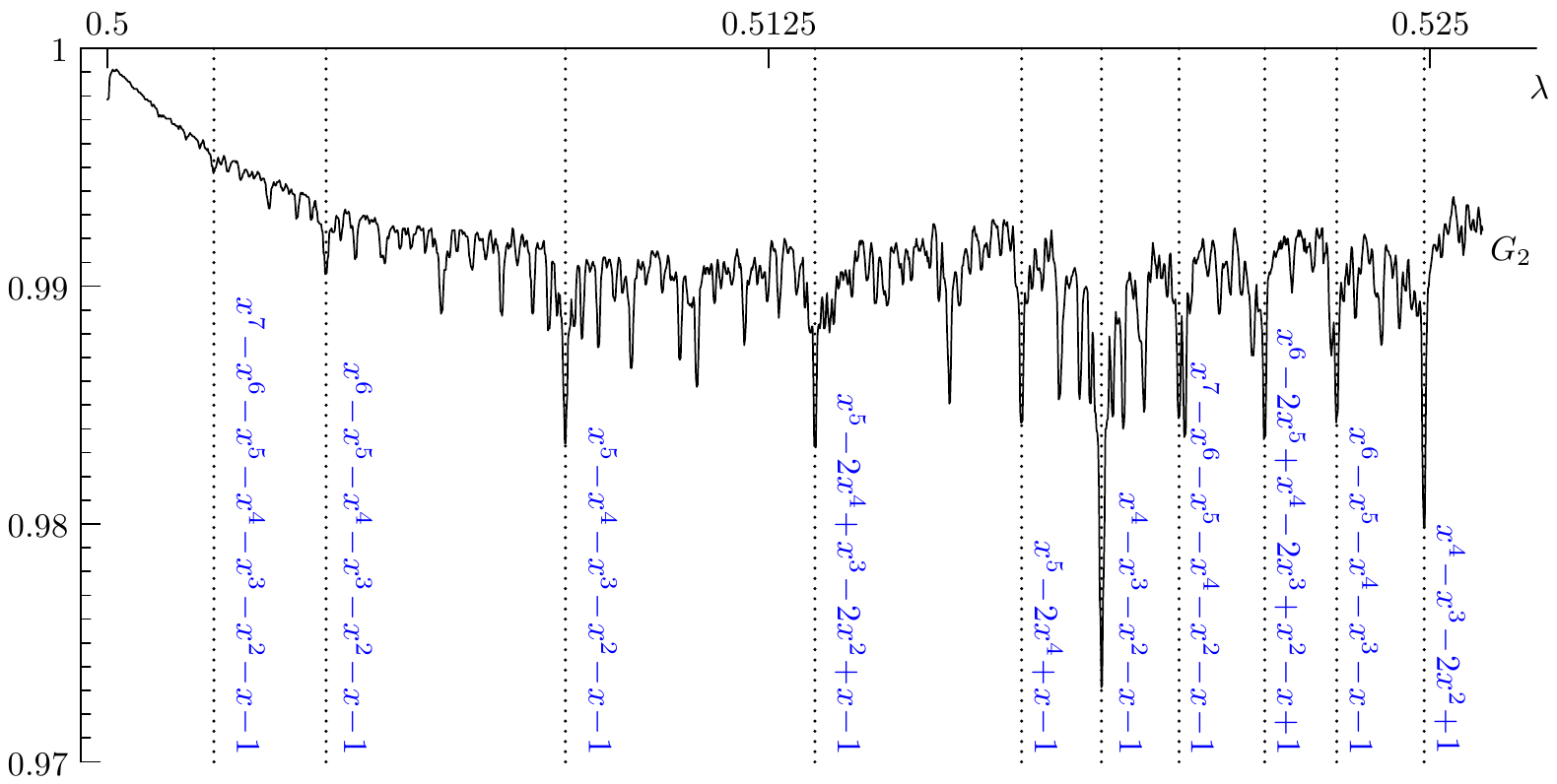}
   \caption{$0.499 < \lambda < 0.526$ }
   \label{fig:bc1}
   \end{subfigure}

   \begin{subfigure}{150mm}
   \includegraphics{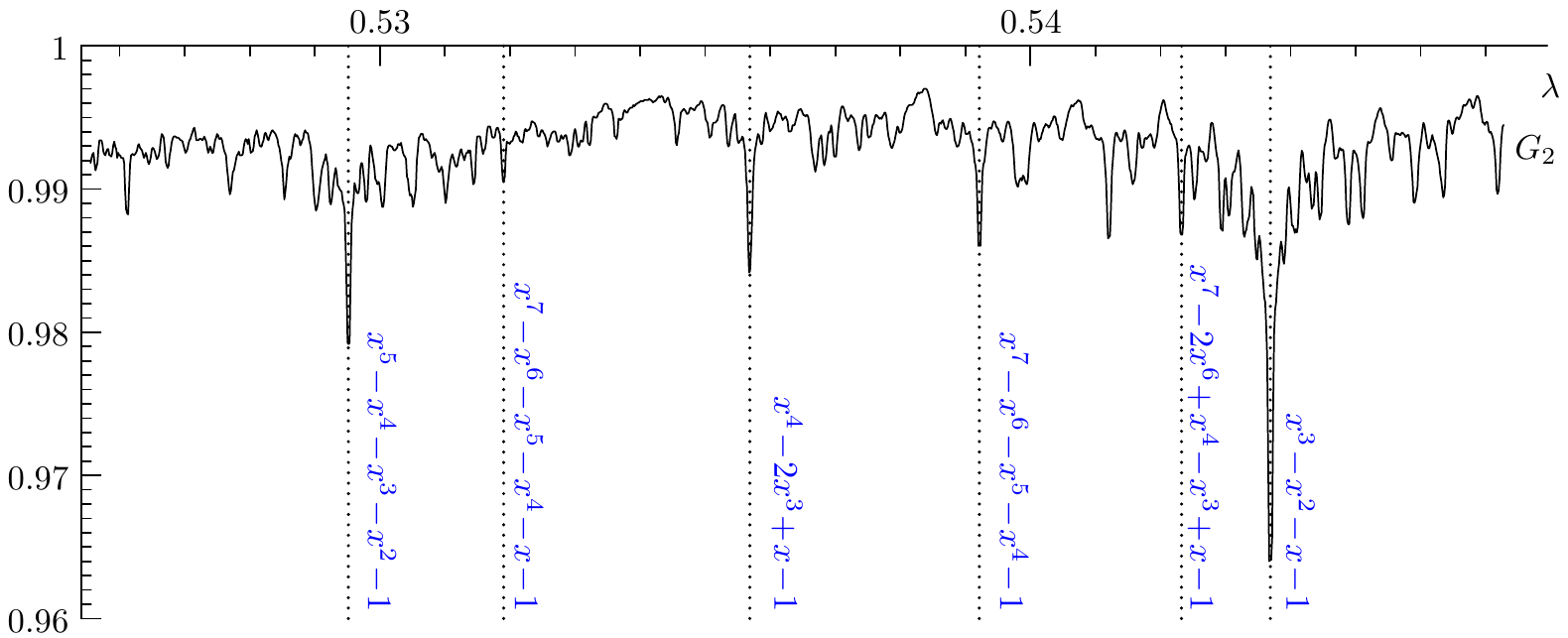}
   \caption{$0.523 < \lambda < 0.548$ }
   \label{fig:bc2}
   \end{subfigure}
   
    \caption{The plot of the piecewise constant function $G_{2}(\lambda)$, which gives
    lower bounds on correlation dimension of Bernoulli convolution $D_2(\mu_\lambda)$. }
    \label{fig:plotBC}
\end{figure}

\begin{figure}
   \ContinuedFloat
   \begin{subfigure}{150mm}
   \includegraphics{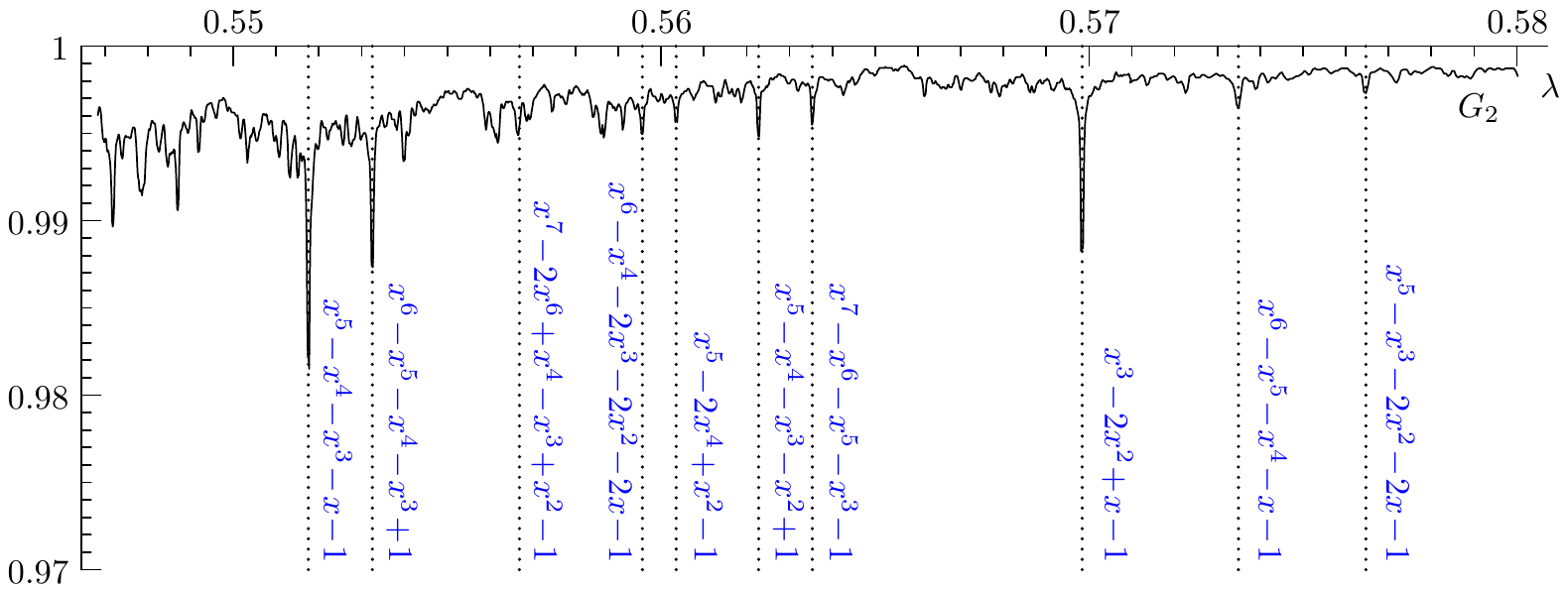}
    \caption{$0.548 < \lambda < 0.58$ }
   \label{fig:bc3}
   \end{subfigure}

   \begin{subfigure}{150mm}
   \includegraphics{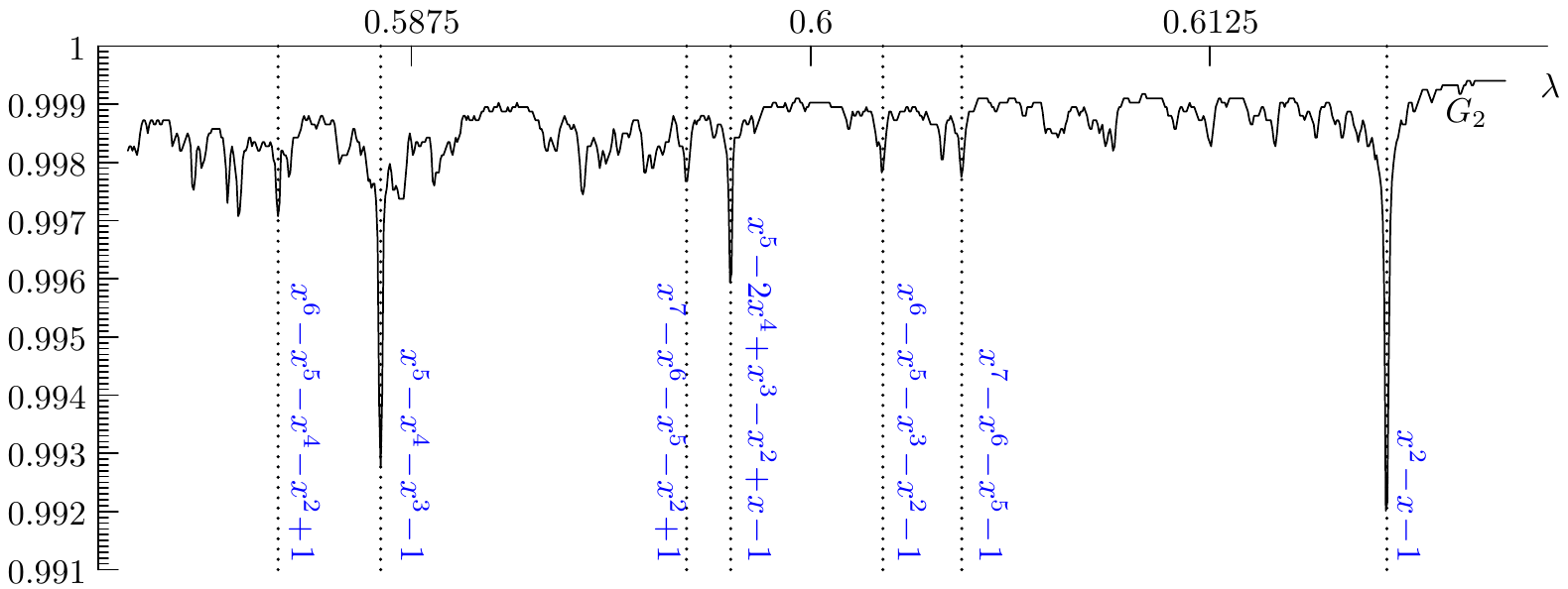}
   \caption{$0.58 < \lambda < 0.625$ }
   \label{fig:bc4}
   \end{subfigure}
 
   \begin{subfigure}{150mm}
   \includegraphics{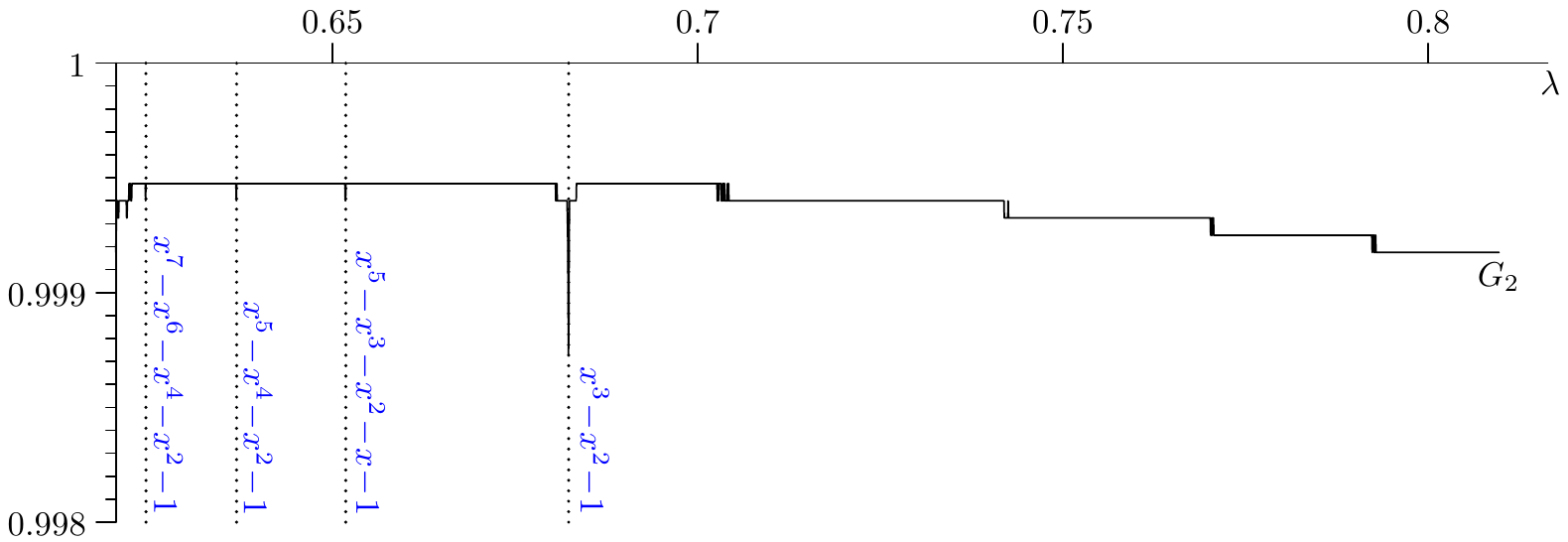}
    \caption{$0.625 < \lambda < 0.81$ }
   \label{fig:bc5}
   \end{subfigure}

    \caption{(Continued). The plot of the piecewise constant function $G_{2}(\lambda)$, which gives
    lower bounds on correlation dimension of Bernoulli convolution $D_2(\mu_\lambda)$. 
    The polynomials indicated are the minimal polynomials of the corresponding values
    $\lambda$, which are Pisot. }
    \label{fig:plotBC}
\end{figure}

Based on the graph of the lower bound function~$G_2(\lambda)$ shown in Figure~\ref{fig:plotBC} we 
\emph{conjecture} that for reciprocals of Fibonacci $\lambda = 2/(1+\sqrt 5)$ and ``tribonacci'' ($\lambda = \beta^{-1}$, where $\beta$ is the largest
root of $x^3-x^2-x-1$) parameter values there exists a sequence $\lambda_n^\prime$ of Pisot
numbers such that $\lambda_n^\prime \to \lambda$ as $n\to\infty$ such that the dimensions
$D_2(\mu_{\lambda'_n})<1-\varepsilon$ for some $\varepsilon>0$ and moreover
the limit $\lim_{n\to \infty} D_2(\mu_{\lambda'_n}) < 1$. 

\subsubsection{Proof of Theorem~\ref{thm:dim013} }
Contrary to the case of Bernoulli convolutions, there were no apriori estimates
on dimension drop known. 

First, we consider a cover of the interval of parameter values
$(0.249,0.334)$ by $100$ overlapping intervals $\Lambda_k$, $k=1,\ldots,100$. 
We set the limit $K=200$ for the number of iterations and $N=10^5$ for the
number of intervals for piecewise constant functions. We then choose $\varepsilon = 0.01$
as the refinement parameter and $d_1=0.5$, $d_2 = -\frac{\log
3}{\log\inf\Lambda_k}$ as lower and upper bounds, respectively. 
Applying the algorithm described in Section~\ref{sss:compute} we obtain rough
estimates. The result is presented in Table~\ref{tab:013step1}. Afterwards, we improve this estimate. We choose the refinement parameter
$\varepsilon = 10^{-4}$, set $N = 10^7$ to be the number of intervals for the
step function, $K = 1000$ for the maximal number of iterations and choose the
lower bound which was already computed. 

The lower bounds for $\lambda \in (0.333,0.401)$ we compute applying the same
steps with $d_0 = 0$ and $d_1 = 1$. 

\begin{table}
    \centering
    \begin{tabular} {|cc||cc|}
        \hline 
        $\Lambda$ & $\alpha$ &  $\Lambda$ & $\alpha$ \\
        \hline
 $[0.25000,0.26501]$& $0.77082$ & $[0.32839, 0.33173]$ &$0.85657$ \\
 $[0.26501,0.26918]$& $0.79581$ & $[0.33173, 0.33434]$ &$0.79659$ \\
 $[0.26918,0.28169]$& $0.80051$ & $[0.33434, 0.33702]$ &$0.87375$ \\
 $[0.28169,0.28669]$& $0.83549$ & $[0.33702, 0.34372]$ &$0.89479$ \\
 $[0.28669,0.29086]$& $0.85245$ & $[0.34372, 0.34908]$ &$0.91583$ \\
 $[0.29086,0.30587]$& $0.83663$ & $[0.34908, 0.35712]$ &$0.94388$ \\
 $[0.30587,0.30838]$& $0.87025$ & $[0.35712, 0.36717]$ &$0.91583$ \\
 $[0.30838,0.31338]$& $0.86111$ & $[0.36717, 0.37722]$ &$0.95440$ \\
 $[0.31338,0.32089]$& $0.88076$ & $[0.37722, 0.38526]$ &$0.95791$ \\
 $[0.32089,0.32839]$& $0.86694$ & $[0.38526, 0.40000]$ &$0.98246$ \\
        \hline
    \end{tabular}
    \caption{
Uniform lower bounds for the correlation dimension of the stationary measure in
the $\{0,1,3\}$-problem, after the first step.  }
\label{tab:013step1}
\end{table}

The result is presented in Figure~\ref{fig:plot013}. 
We managed to identify minimal polynomials of algebraic numbers which seem to 
correspond to some of the biggest dimension drops and verified that the corresponding parameter values 
are reciprocals of hyperbolic numbers. 
In the case of the $\{0,1,3\}$-system, the overlaps conjecture implies that 
the dimension drop \emph{for algebraic parameter values} takes place only for the roots of polynomials with
coefficients $\{0,\pm1,\pm2,\pm 3\}$, and the polynomials we have identified
satisfy this property. 

\begin{figure}
  
   \begin{subfigure}{150mm} 
   \includegraphics{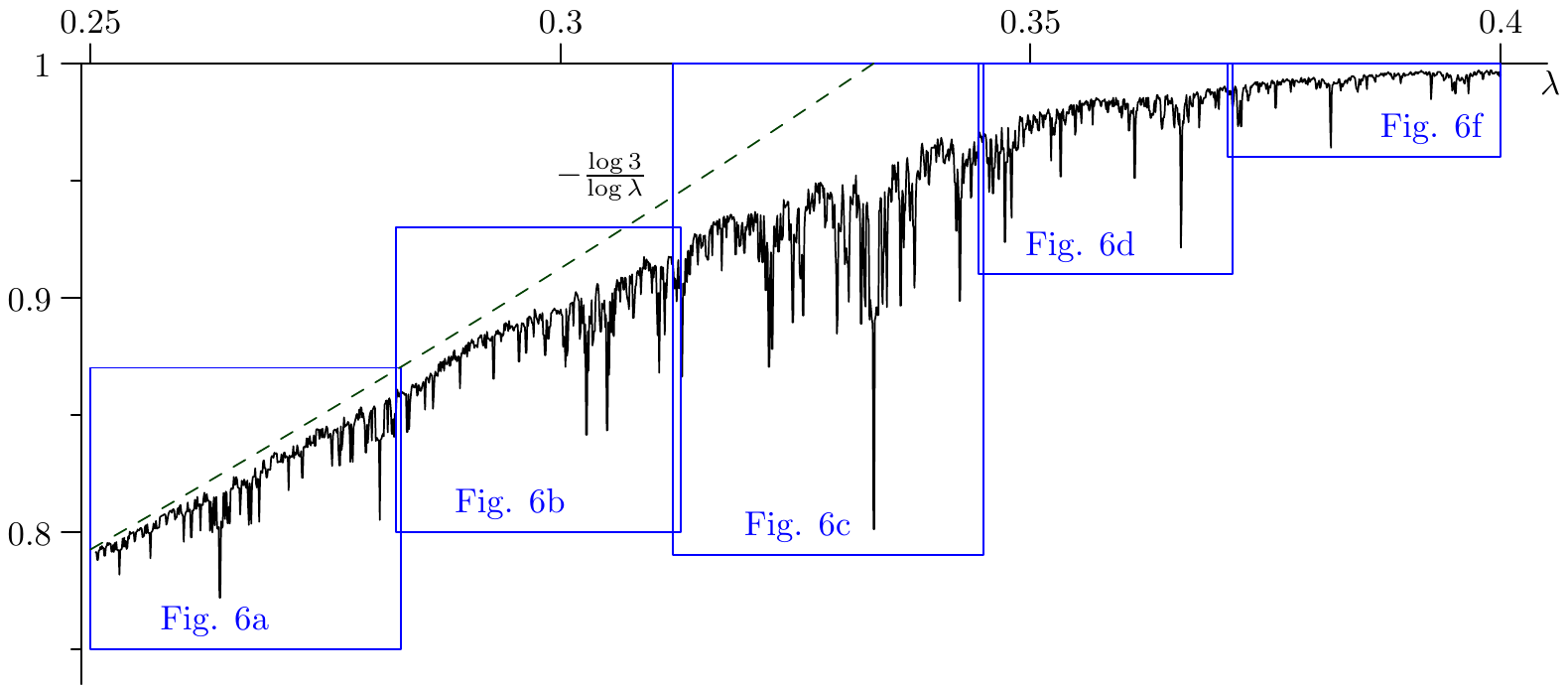}
   \caption{Plot of lower bounds for correlation dimension of Bernoulli
   convolution measures for $0.249 \le \lambda \le 0.401$ with more detailed plots
   in~\ref{fig:013-p1},~\ref{fig:013-p2},
   \ref{fig:013-p3},~\ref{fig:013-p4},~\ref{fig:013-p5}
   below.} 
   \end{subfigure}
   
   \begin{subfigure}{150mm} 
   \includegraphics{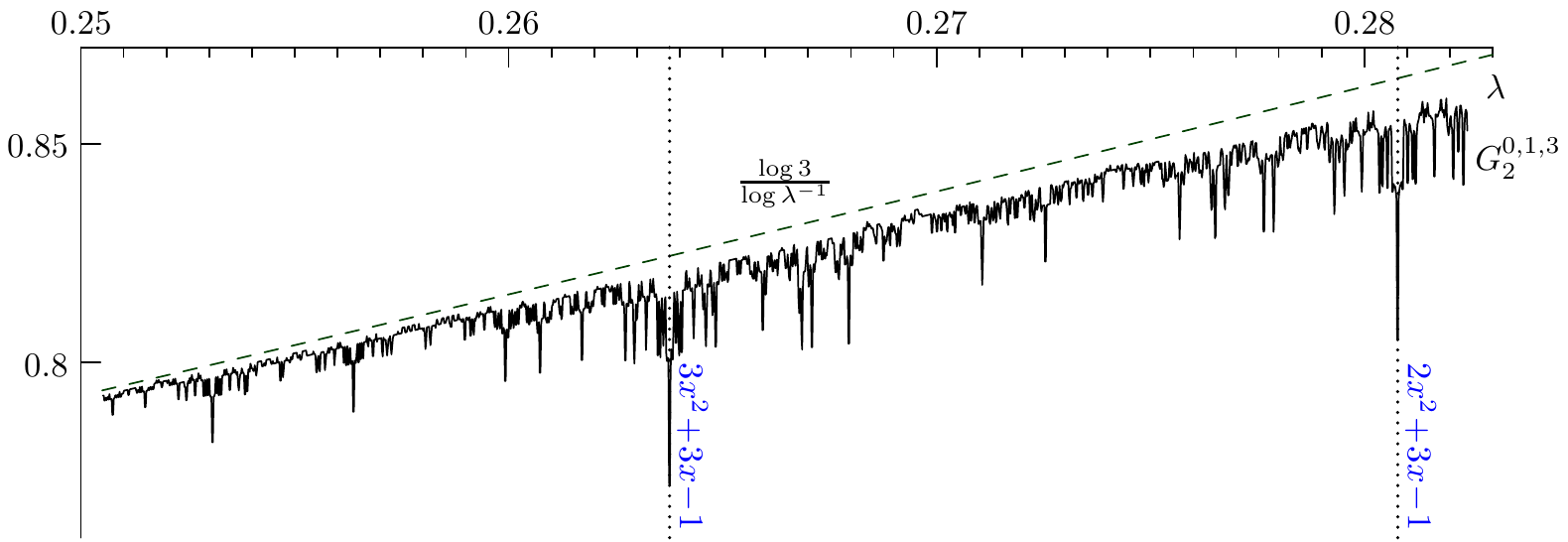}
   \caption{$0.25<\lambda<0.283$}
   \label{fig:013-p1}
   \end{subfigure}
   
   \begin{subfigure}{150mm} 
   \includegraphics{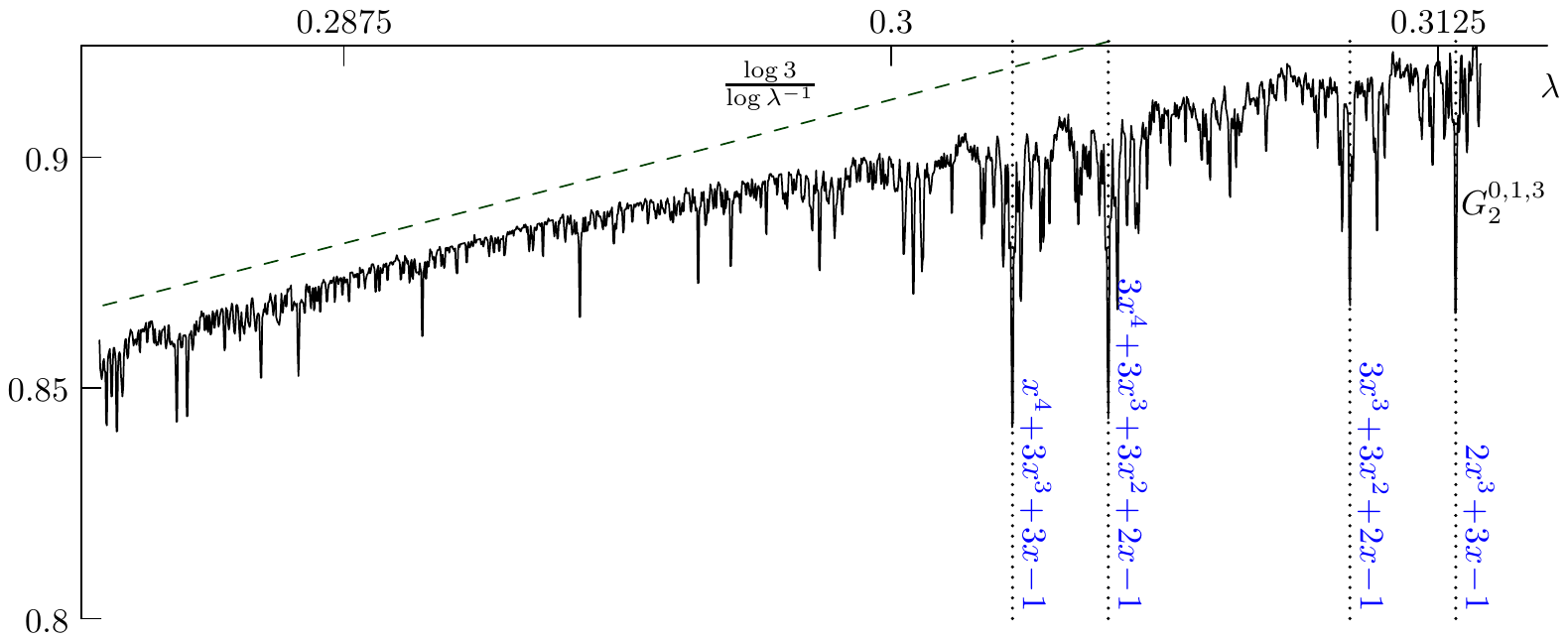}
    \caption{$0.2825<\lambda<0.3128 $}
   \label{fig:013-p2}
   \end{subfigure}

   \caption{The plot of the piecewise constant function $G^{0,1,3}_{2}(\lambda)$, which gives
   lower bounds on Hausdorff dimension $D_2(\mu^{0,1,3}_\lambda)$ of the stationary
   measure for the $\{0,1,3\}$-system. The polynomials indicated are the minimal polynomials 
   of the corresponding values $\lambda$, which are hyperbolic.}
    \label{fig:plot013}
\end{figure}

\begin{figure}
   \ContinuedFloat
   \begin{subfigure}{150mm} 
   \includegraphics[height=67mm]{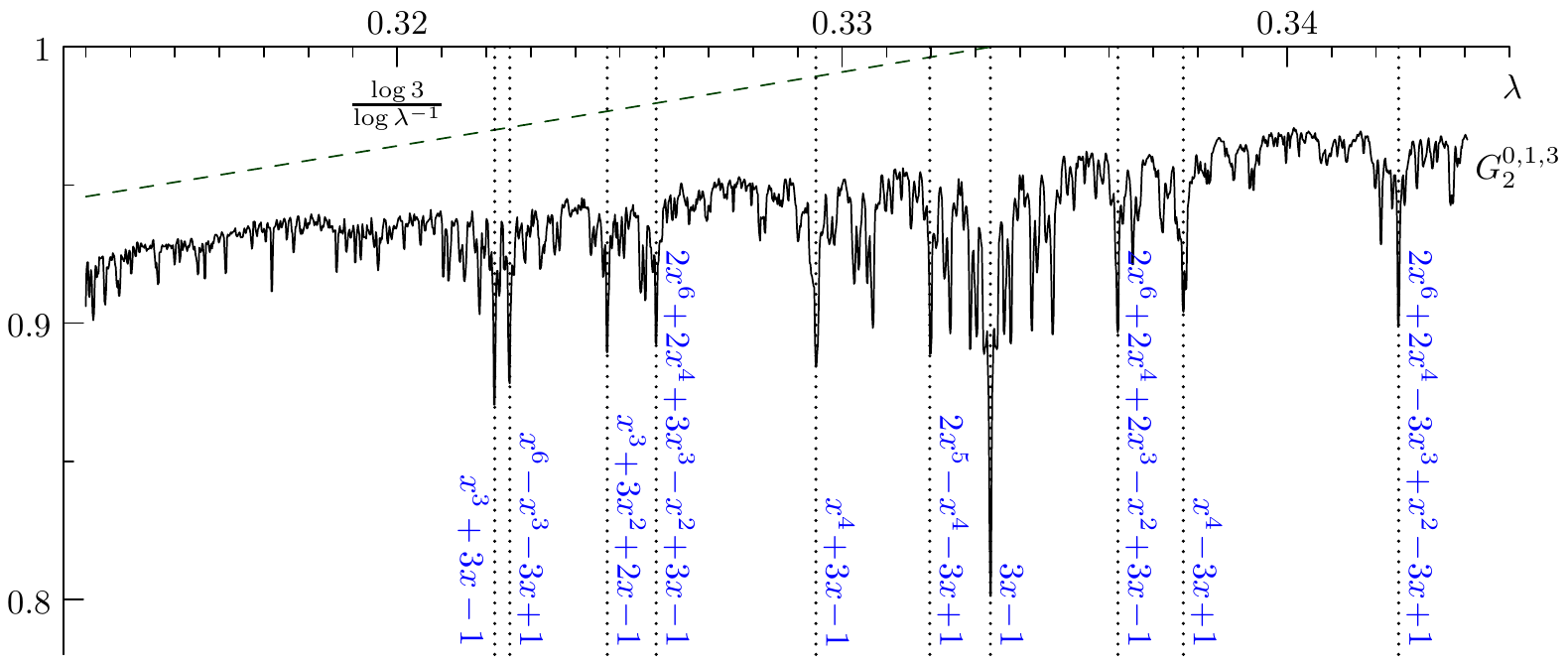}
   \caption{$0.312<\lambda<0.345 $}
   \label{fig:013-p3}
   \end{subfigure}
 
   \begin{subfigure}{150mm} 
       \includegraphics[height=67mm]{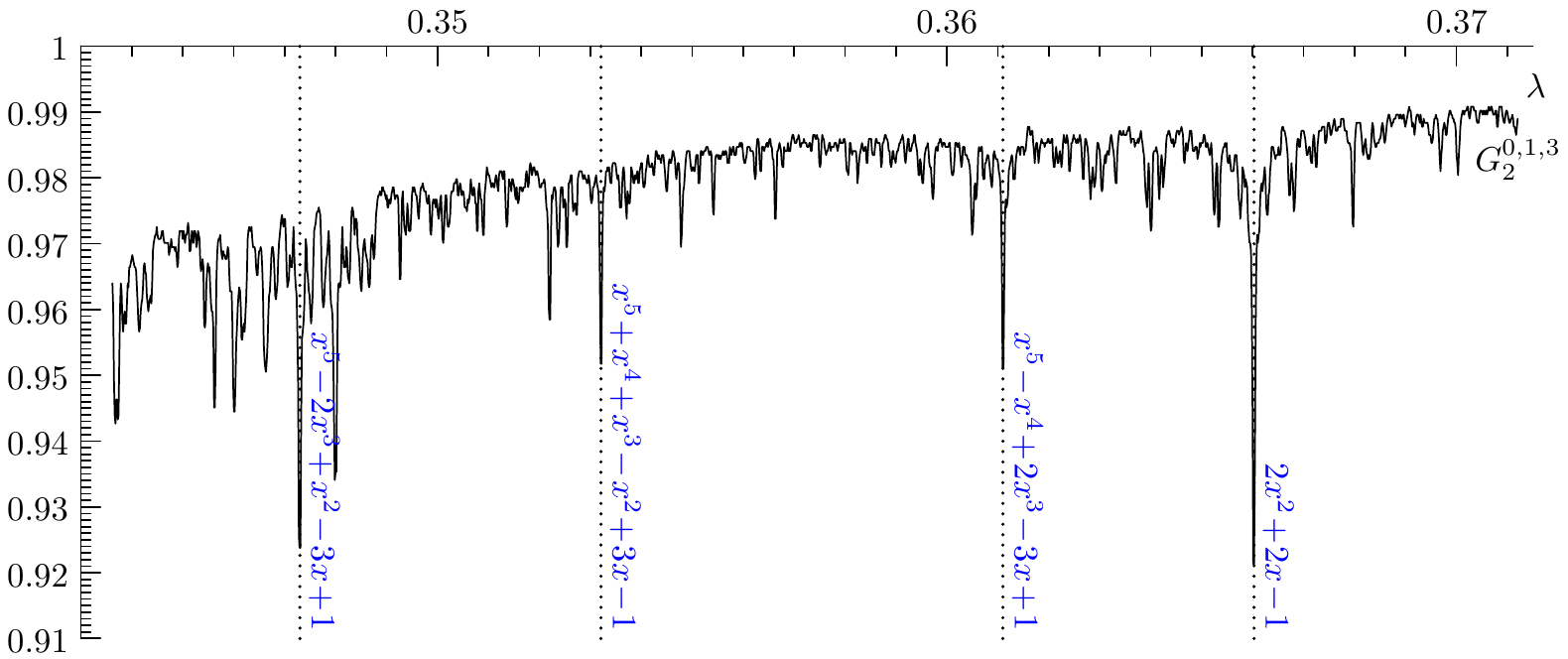}
   \caption{$0.3445<\lambda<0.3715 $}
   \label{fig:013-p4}
   \end{subfigure}

   \begin{subfigure}{150mm} 
   \includegraphics[height=67mm]{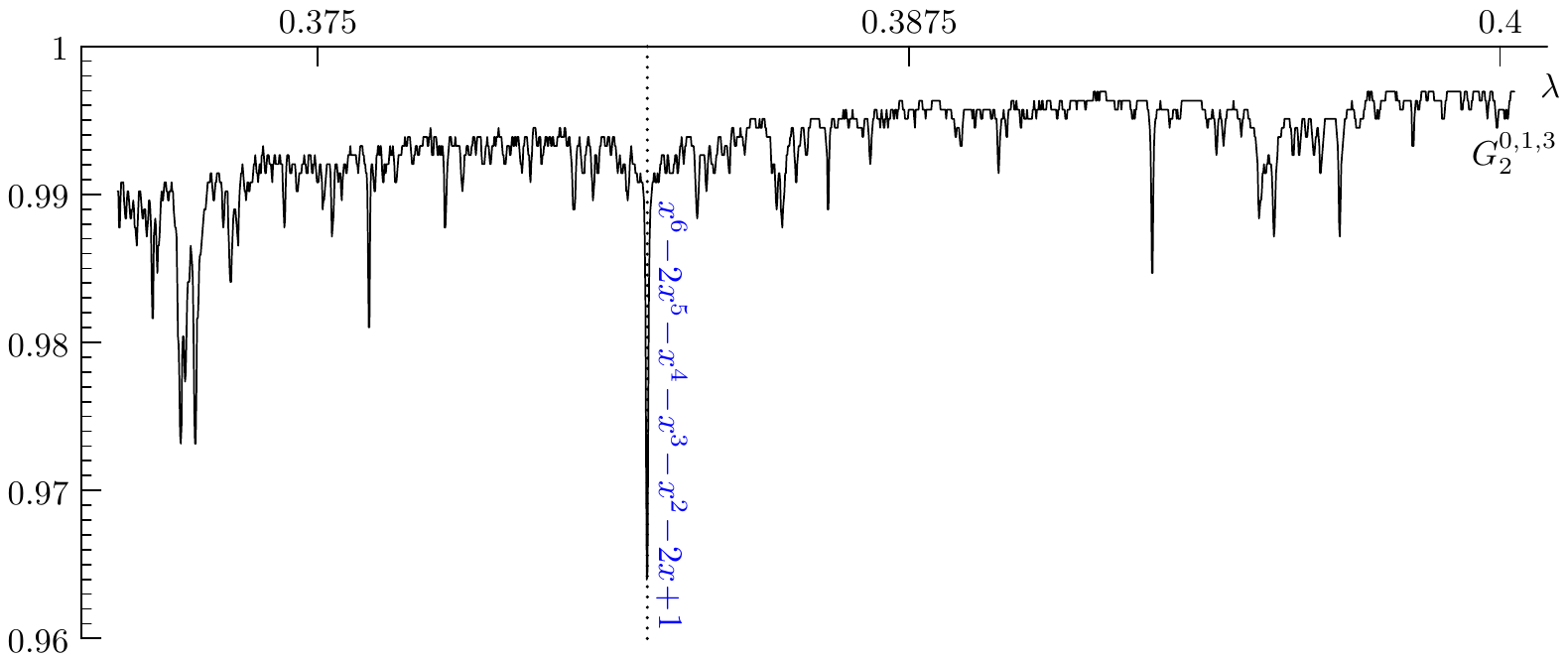}
   \caption{$0.3710<\lambda<0.4 $}
   \label{fig:013-p5}
   \end{subfigure}
  
   \caption{The plot of the piecewise constant function $G^{0,1,3}_{2}(\lambda)$, which gives
   lower bounds on Hausdorff dimension $D_2(\mu^{0,1,3}_\lambda)$ of the stationary
    measure for the $\{0,1,3\}$-system. }
    \label{fig:plot013}
\end{figure}

\subsection{Selected algebraic parameter values}
\label{s:algebraicLambda}
So far we have applied our method to compute uniform lower bounds on dimension
of the stationary measures. As we highlighted already in the end of
\S\ref{ss:efbounds-1}, in order to get a lower bound on $D_2(\mu_\lambda)$ 
for an algebraic~$\lambda$, we need to consider a small interval containing the value. 
In this section, we compute a lower bound on correlation, and hence
Hausdorff, dimensions of $\mu_\lambda$, for selected algebraic values and compare
our results with the existing data. For some specific values these results are not
as accurate as existing estimates. For other parameter values, e.g. Salem
numbers, we give a new improved lower bound. 

We begin by recalling some known results. 
In~\cite{G63} Garsia introduced a notion of entropy of an algebraic
number~$\lambda$ (also see~\cite{HKPS19} for an alternative definition)
$$
h(\lambda) = \lim_{N \to +\infty} -\frac{1}{2^n} 
\sum_{i_1, \cdots, i_N \in \{0,1\}} \log \left(
\frac{1}{2^n}\hbox{\rm Card}\left\{
j_1, \cdots, j_N \in \{0,1\} \hbox{ : } \sum_{k=1}^n(i_k-j_k)\lambda^k = 0 
\right\} \right).
$$
Garsia entropy was first used to estimate Hausdorff dimension of Bernoulli
convolution corresponding to the Golden mean~$\lambda = \frac{2}{1+\sqrt5}$. The
method has been subsequently extended in~\cite{GKT02} to the roots of the polynomials 
$$
P_n(x) = x^n - x^{n-1} - \ldots - x - 1.
$$ 
In Table~\ref{tab:multinacci} we give a comparison of the lower bounds we have
computed using the diffusion operator and the results of Grabner at
al.~\cite{GKT02}. This illustrates that our bounds are quite close to the known
values.

\begin{table}
    \centering
\begin{tabular}{|ccc||ccc|}
\hline
$n$ & $\dim_H(\mu_\lambda)$ & $\alpha$ & $n$ & $\dim_H(\mu_\lambda)$ & $\alpha$ \\
$2$ &  $ 0.995713126685555$   & $  0.992395833333  $  & $6$ &  $ 0.996032591584967$   & $  0.990673828125  $  \\ 
$3$ &  $ 0.980409319534731$   & $  0.964214555664  $  & $7$ &  $ 0.997937445507094$   & $  0.994959490741  $  \\    
$4$ &  $ 0.986926474333800$   & $  0.973324567994  $  & $8$ &  $ 0.998944915449832$   & $  0.997343750000  $  \\     
$5$ &  $ 0.992585300274171$   & $  0.983559570313  $  & $9$ &  $ 0.999465368055570$   & $  0.998640046296  $  \\    
\hline
\end{tabular}
\caption{Comparison of the lower bound for the correlation dimension $\alpha < D_2(\mu_\lambda)$ computed using
the diffusion operator and the Hausdorff dimension computed in~\cite[\S4]{GKT02}
for mutlinacci parameter values.}
    \label{tab:multinacci}    
\end{table}
The connection between $h(\lambda)$ and $\dim_H(\mu_\lambda)$ comes by a result
of Hochman~\cite{hochman}: 
\begin{equation}
\label{eq:entropy}
\dim_H(\mu_\lambda) = \min\left\{-\frac{h(\lambda)}{\log\lambda},1\right\}.
\end{equation}
However, despite~\eqref{eq:entropy} being exact, the value $h(\lambda)$ for algebraic numbers 
is often quite difficult to estimate for all but a small number of examples, see~\cite{AFKP} (also~\cite{lalley})
which gave algorithms to compute the entropy based on Lyapunov exponents of random matrix products.
For several explicit (non-Pisot) examples they showed that
$\frac{h(\lambda)}{\log(\lambda^{-1})} > 1$; together with~\eqref{eq:entropy}
this implies $\dim_H(\mu_\lambda) =1$.

On the other hand, Breuillard and Varj\'u~\cite{BV20}
gave an estimate on $h(\lambda)$ in terms of the Mahler measure~$M_\lambda$:  
\begin{equation}
    \label{eq:BV20}
c \cdot \min\left\{1, \log M_\lambda\right\} \le h(\lambda) \le \min\left\{1,
\log M_\lambda\right\}.
\end{equation}
Non-rigorous numerical calculations suggest that one can take $c=0.44$. 
The upper bound in~\eqref{eq:BV20} is often strict. In particular, it is known that
$h(\lambda) < M_\lambda$, provided~$\lambda$ has no Galois conjugates on the
unit circle~\cite{BV20}. 

The dimension of the Bernoulli convolution measure for certain hyperbolic parameter
values~$\lambda$ can be computed explicitly, too.
In a recent work~\cite{HKPS19} Hare et al. considered hyperbolic
algebraic numbers of degree~$5$. For a number of them they showed that the
stationary measure has full Hausdorff dimension~\cite[Tables 5.1, 5.2]{HKPS19}. 
We present our lower bound for the correlation dimension for comparison in
Table~\ref{t:old}, which shows that our lower bounds are accurate to~$3$ decimal
places.
\begin{table}[h] 
\begin{center}
\begin{tabular}{|cccl|}
\hline
$\lambda $ & $\alpha$ &  $ \beta=\lambda^{-1} $ & \qquad \quad polynomial \\
 $  0.862442360254 $ &    $ 0.999609375000  $&$1.159497777573$ &  $x^5 + x^4 - x^3 - x^2 - 1$ \\
 $  0.827590407756 $ &    $ 0.999687500000  $&$1.208327199818$ &  $x^5 - x^4 + x^3 - x - 1$ \\
 $  0.874449227129 $ &    $ 0.999609375000  $&$1.143576972768$ &  $x^5 + x^3 - x^2 - x - 1$ \\
 $  0.710434255787 $ &    $ 0.999843750000  $&$1.407589783086$ &  $x^5 - x^4 + x^3 - x^2 - x - 1$ \\
 $  0.779544663821 $ &    $ 0.999765625000  $&$1.282800135015$ &  $x^5 - x^3 - x^2 + x - 1$ \\
 $  0.791906429308 $ &    $ 0.999765625000  $&$1.262775452996$ &  $x^5 - x^4 + x^2 - x - 1$ \\
 $  0.786151377757 $ &    $ 0.999765625000  $&$1.272019649514$ &  $x^4 - x^2 - 1$ \\
 $  0.699737022113 $ &    $ 0.999843750000  $&$1.429108319838$ &  $x^5 - x^3 - x^2 - 1$ \\
 $  0.779544663821 $ &    $ 0.999765625000  $&$1.282800135015$ &  $x^5 - x^3 - x^2 + x - 1$ \\
 $  0.800094994405 $ &    $ 0.999765625000  $&$1.249851588864$ &  $x^5 - x^4 + x^3 - x^2 - 1$ \\
 $  0.876611867657 $ &    $ 0.999609375000  $&$1.140755717433$ &  $x^5 + x^4 - x^3 - x - 1$ \\
 $  0.655195524260 $ &    $ 0.999843750000  $&$1.526261952307$ &  $x^5 - x^4 - x^2 - x + 1$ \\
 $  0.848374895732 $ &    $ 0.999687500000  $&$1.178724176105$ &  $x^4 + x^3 - x^2 - x - 1$ \\
 $  0.819172513396 $ &    $ 0.999687500000  $&$1.220744084605$ &  $x^4 - x - 1$ \\
 $  0.774804113215 $ &    $ 0.999765625000  $&$1.290648801346$ &  $x^4 - x^3 + x^2 - x - 1$ \\
 $  0.730440478359 $ &    $ 0.999843750000  $&$1.369036943635$ &  $x^5 - x^3 - x^2 - x + 1$ \\
 $  0.833363173425 $ &    $ 0.999687500000  $&$1.199957031806$ &  $x^5 - x^3 + x^2 - x - 1$ \\ 
\hline
\end{tabular}
\end{center}
\caption{Lower bound $\alpha < D_2(\mu)$ for the correlation dimension for selected algebraic numbers for
which it is known~\cite{HKPS19} that $\dim_H(\mu_\lambda)=1$, computed  
using~$5\cdot10^6$ partition intervals and~$500$ iterations of the diffusion
operator with the refinement parameter $\varepsilon=10^{-4}$. The
value of the root~$\beta$ is given to simplify the comparison with~\cite{HKPS19}.}
\label{t:old}
\end{table}
On the other hand, there are a number of algebraic parameter values to which the method presented
in~\cite{HKPS19} doesn't apply, they are listed in~\cite[Table 5.3]{HKPS19}. For
these values we give a new lower bound in Table~\ref{t:new}. 
\begin{table}[h]
\begin{center}
\begin{tabular}{|cccl|}
\hline
    $\lambda $ & $\alpha$ &  $ \beta=\lambda^{-1} $ & \qquad \quad polynomial \\
  $0.593423522613  $	&	$   0.998714285714 $	&	$ 	 1.685137110165 $	&	$ 	 z^5-z^4-z^2-z-1 $\\
  $0.595089298038  $	&	$   0.999000000000 $	&	$ 	 1.680420070225 $	&	$ 	 z^5-z^4-z^3-z+1 $\\
  $0.557910446633  $	&	$   0.997857142857 $	&	$ 	 1.792402357824 $	&	$ 	 z^5-z^4-z^3-z^2+z-1 $ \\
  $0.712452611946  $	&	$   0.999800000000 $	&	$ 	 1.403602124874 $	&	$ 	 z^5-z^4-z^2+z-1 $ \\
  $0.645200388386  $	&	$   0.999800000000 $	&	$ 	 1.549906072594 $	&	$ 	 z^5-z^4-z^3+z-1 $ \\
  $0.667960707496  $	&	$   0.999800000000 $	&	$ 	 1.497094048762 $	&	$ 	 z^5-z^4-z-1 $ \\
  $0.808730600479  $	&	$   0.999600000000 $	&	$ 	 1.236505703391 $	&	$ 	 z^5-z^3-1 $ \\
  $0.837619774827  $	&	$   0.999722222222 $	&	$ 	 1.193859111321 $	&	$ 	 z^5-z^2-1 $ \\
  $0.856674883855  $	&	$   0.999652777778 $	&	$ 	 1.167303978261 $	&	$ 	 z^5-z-1 $\\
  $0.889891245776  $	&	$   0.999513888889 $	&	$ 	 1.123732821001 $	&	$ 	 z^5+z^4-z^2-z-1 $ \\
\hline
\end{tabular}
\end{center}
\caption{Lower bound $\alpha<D_2(\mu)$ for the correlation dimension for selected algebraic numbers for which there are no previous lower bounds, computed 
using~$5\cdot10^6$ partition intervals and~$500$ iterations of the diffusion
operator and the refinement parameter $\varepsilon =10^{-4}$.
The value of the root~$\beta$ is given to simplify the comparison
with~\cite{HKPS19}. }
\label{t:new}
\end{table}

\subsubsection{Estimates for Salem numbers: Proof of Theorem~\ref{thm:salemnum}}
In a recent work, Breuillard and Varj\'u state an open problem~\cite[Problem
3]{BV20}, asking whether it is true that $h(\lambda) = M_\lambda$ for all Salem
parameter values $\lambda \in \left(\frac12,1\right)$. 
This equality would imply that $\dim_H(\mu_\lambda)=1$ for Salem parameter
values. 
 
We apply the method described in~\S\ref{sss:compute} to $99$ Salem numbers of
degree no more than~$10$ and to~$47$ small Salem numbers. Our computational set-up
had the following choices. First, we compute each Salem number $s_k$ with an accuracy
of $10^{-32}$ and consider a neighbourhood of radius $\delta=10^{-8}$, i.e. $\Lambda_k =
B_\delta(s_k)$. Then, based on the existing results, we choose $d_1 = 0.98$ and
$d_2 = 1$, as conjectured lower and upper bounds. The number of intervals for piecewise constant
functions is $N=6\cdot10^5$, and the allowed number of iterations for the
diffusion operator is $K=300$. We also set the refinement parameter $\varepsilon = 10^{-4}$. 
The detailed result is presented in Appendix \S\ref{ap:salem}
and~\S\ref{ap:smallsalem}.  

\section[Proof of Theorem 1.8]{Asymptotic bounds: proof of Theorem~\ref{thm:near1} }
\label{s:abounds}
We have provided a uniform lower bound on the correlation dimension
$D_2(\mu_\lambda)$ of Bernoulli convolution measures $\mu_\lambda$. 
Now we will give an asymptotic lower bound for $D_2(\mu_\lambda)$ in a
neighbourhood of~$1$ using the diffusion operator approach.  

\begin{proof}[of Theorem~\ref{thm:near1}]
        Given a small $\varepsilon>0$ let us set $\lambda = 1 - \varepsilon$.
        Then the symmetric diffusion operator~\eqref{eq:dif-twoway} takes the form
        $$
        [\Ds_{\alpha,\mS}\psi](x) = (1-\varepsilon)^{-\alpha}\frac14
        \left(\psi\left(\frac{x+1}{1-\varepsilon}\right) + 2
        \psi\left(\frac{x}{1-\varepsilon}\right) +
        \psi\left(\frac{x-1}{1-\varepsilon}\right) \right)
        $$
        In order to prove the result, it is sufficient to find a function
        $f_\varepsilon$ such that for any $c>\frac32$ and for any
        $\alpha < 1-c\varepsilon$ we have that 
        \begin{equation}
            \label{eq:dsfe}
            \Ds_{\alpha,\mS} f_\varepsilon \prec f_\varepsilon. 
        \end{equation}
        We will specify the function $f_\varepsilon$ explicitly. Let us introduce a
        shorthand notation $\delta:=2\varepsilon-\varepsilon^2>0$ and define 
        \begin{equation}
        \label{rhs:eq}
        f_\varepsilon(x):= \exp(-\delta(1-\varepsilon)^2x^2) = \exp(-\delta
            x^2) \cdot \exp(\delta^2 x^2)
        \end{equation}
        It is not difficult to see that $f_\varepsilon$ satisfies~\eqref{eq:dsfe}.
        Indeed, note that $\frac{1+\exp(-\delta)}2\le
        (1-\varepsilon)^{1-c\varepsilon}$ for any $\varepsilon$ sufficiently
        small and any $c>\frac32$. Therefore to establish~\eqref{eq:dsfe} it is
        sufficient to show that 
        \begin{equation}
            \label{near1:eq}
        \frac14\left(f_\varepsilon\left(\frac{x+1}{1-\varepsilon}\right)+
        f_\varepsilon\left(\frac{x-1}{1-\varepsilon}\right)+2f_\varepsilon\left(\frac{x}{1-\varepsilon}\right)\right)\le
        \frac{1+\exp(-\delta)}{2}f_\varepsilon(x).
        \end{equation}
        To prove \eqref{near1:eq} we first note that 
   $$
    \begin{aligned}
            f_\varepsilon\left(\frac{x+1}{1-\varepsilon}\right) +
            f_\varepsilon\left(\frac{x-1}{1-\varepsilon}\right) &=
            \exp(-\delta (x+1)^2) + \exp(-\delta(x-1)^2) \cr &= \exp(-\delta) \cdot
            \exp(-\delta x^2) \left( \exp(-2 \delta x) + \exp(2\delta x)\right)
            \cr &= 2 \exp(-\delta) \cdot \exp(-\delta x^2) \cdot \cosh(2\delta x).
    \end{aligned}
    $$
       Moreover, since $f_\varepsilon\left(\frac{x}{1-\varepsilon}\right) = \exp(-\delta x^2)$ we
        conclude for the left hand side of~\eqref{near1:eq} that 
        \begin{equation}
            \label{lhs:eq}
        \frac14\left(f_\varepsilon\left(\frac{x+1}{1-\varepsilon}\right)+
        f_\varepsilon\left(\frac{x-1}{1-\varepsilon}\right)+2f_\varepsilon\left(\frac{x}{1-\varepsilon}\right)\right)
        = \frac12  \exp(-\delta x^2) \cdot \left(1 +  \cosh(2\delta x) \exp(-\delta)
        \right).
        \end{equation}
        Combining~\eqref{rhs:eq} and~\eqref{lhs:eq} we see that~\eqref{near1:eq}
        is equivalent to
        $$
            1 + \exp(-\delta)\cosh(2\delta x) \le (1+\exp(-\delta) )\exp \left(
            \delta^2  x^2\right), 
        $$
        which in turn, is equivalent to
        \begin{equation}\label{ineq:eq}
            \frac{1} {1+\exp(-\delta) } + \frac{\exp(-\delta)} {1+\exp(-\delta) }\cosh(2\delta x) \le \exp \left(
            \delta^2  x^2\right), 
        \end{equation}
        To establish (\ref{ineq:eq}) it is sufficient to show that 
        $$
        \frac12+\frac12\cosh(2\delta x) \le \exp\left( \delta^2 x^2\right).  
        $$
 This last inequality  can be established by comparision of the
        Taylor series coefficients term by term. More precisely, the coefficient in front of
        the term $(\delta x)^{2k}$ of the function $\cosh(2\delta x)$ is
        $\frac{2^{2k}}{(2k)!}$ and the same coefficient of the function
        $\exp\left(\delta^2 x^2\right)$ is equal to $\frac{1}{k!}$.  
\end{proof}

\section[Diffusion operator $\Ds_{\alpha,\lambda}$]{Diffusion operator $\Ds_{\alpha,\lambda}$ and correlation dimension}
\label{section:diffusion}

We would like to start by explaining the idea behind the diffusion operator and 
its connection with the correlation dimension which lead us to it. 

Let us recall the energy integral~\eqref{eq:a-int}
$$
I(\mu,\alpha) = \int_\bbR \int_\bbR |x-y|^{-\alpha} \mu(dx) \mu(dy)
$$
and the definition of the correlation dimension~\eqref{eq:D2-bis}: $D_2(\mu) =
\sup\{\alpha \colon I(\mu,\alpha) \mbox{ is finite } \}$. In other words,
$I(\mu,\alpha)$ is finite for any $\alpha < D_2(\mu)$. 

Let $\mS(\lambda,\bar c, \bar p)$ be an iterated function scheme of~$n$
similarities $f_j = \lambda x - c_j$, $j = 1, \ldots, n$ and let $\mu_\lambda$
be its stationary measure. Then $\mu_\lambda$ is the fixed point of the operator 
on Borel probability measures
$$
T_{\mS} \colon \mu \mapsto \sum_{j=1}^n p_j {f_j}_* \mu
$$
We now would like to study the induced action of~$T_{\mS}$ on~$I(\mu,\alpha)$. 
To this end, we want to incorporate $I(\mu,\alpha)$ into a family.
More precisely, we consider a family of functions given by
\begin{equation}
\label{eq:psi-def}
\psi_{\alpha,\mu}:\mathbb R\to\mathbb R^+\cup \{+\infty\} \qquad
\psi_{\alpha,\mu}(r) : = \int_{\bbR}\int_{\bbR} |(x-y)-r|^{-\alpha} \,
\mu(dx) \, \mu(dy).
\end{equation}

\begin{notation}
We denote by $-\mu$ the push-forward of the measure~$\mu$ under $x \mapsto -x$. 
\end{notation}
In the sequel, we will need the following technical lemma which 
 helps us to decide whether or not $\psi_{\alpha,\mu}(r)$ is finite\footnote{Or in other words whether the function $(x-y-r)^{-\alpha}$ is
integrable with respect to $\mu \times \mu$.}. 
\begin{lemma}
  \label{lem:psi}
  Let $\mu$ be a probability measure. 
  Assume that $I(\mu,\alpha) = \psi_{\alpha,\mu}(0)$ is finite. Then
  $\psi_{\alpha,\mu}(r)$ is finite for any $r \in \bbR$ and, moreover, we have that
  $\psi_{\alpha,\mu}(r) < \psi_{\alpha,\mu}(0)$. In particular,
  $\psi_{\alpha,\mu}$ is a continuous function. 
\end{lemma}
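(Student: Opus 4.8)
The plan is to recognise $\psi_{\alpha,\mu}$ as the convolution of the Riesz kernel $g(s):=|s|^{-\alpha}$ with the self-difference measure $\nu:=\mu*(-\mu)$, i.e. the law of $x-y$ for independent $x,y\sim\mu$. Writing $s=x-y$,
\[
\psi_{\alpha,\mu}(r)=\int_{\bbR}|s-r|^{-\alpha}\,d\nu(s)=(g*\nu)(r),\qquad \psi_{\alpha,\mu}(0)=\int_{\bbR}|s|^{-\alpha}\,d\nu(s)=I(\mu,\alpha).
\]
The measure $\nu$ is finite and symmetric ($\nu=-\nu$), and the whole content of the lemma is that the ``autocorrelation'' $r\mapsto(g*\nu)(r)$ attains its maximum at the origin, the maximum being finite and, for $r\ne0$, strict. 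Finiteness of $\psi_{\alpha,\mu}(r)$ for every $r$ will then follow for free from the comparison $\psi_{\alpha,\mu}(r)\le\psi_{\alpha,\mu}(0)=I(\mu,\alpha)<\infty$, so the comparison is the crux.

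To prove it by elementary means, avoiding the singular kernel directly, the idea is to decompose $g$ into tent functions. For $t>0$ set $\Lambda_t(s):=(t-|s|)_+$, the symmetric tent supported on $[-t,t]$. Since $x\mapsto x^{-\alpha}$ is convex and decreasing on $(0,\infty)$, an integration-by-parts (layer-cake) computation gives
\[
|s|^{-\alpha}=\int_0^\infty \Lambda_t(s)\,d\sigma(t),\qquad d\sigma(t)=\alpha(\alpha+1)\,t^{-\alpha-2}\,dt\ge0,
\]
so by Tonelli (all integrands nonnegative) $\psi_{\alpha,\mu}(r)=\int_0^\infty(\Lambda_t*\nu)(r)\,d\sigma(t)$, and it suffices to show $(\Lambda_t*\nu)(r)\le(\Lambda_t*\nu)(0)$ for each fixed $t$. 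Here I would use the factorisation $\Lambda_t=\mathds{1}_{[-t/2,t/2]}*\mathds{1}_{[-t/2,t/2]}$, which yields $\Lambda_t*\nu=\mu_t*\widetilde{\mu_t}$, where $\mu_t:=\mathds{1}_{[-t/2,t/2]}*\mu$ is a genuine $L^1\cap L^\infty\subset L^2$ function and $\widetilde{\mu_t}(z):=\mu_t(-z)$. Consequently
\[
(\Lambda_t*\nu)(r)=\int_{\bbR}\mu_t(x)\,\mu_t(x-r)\,dx\le\|\mu_t\|_2\,\|\mu_t(\cdot-r)\|_2=\|\mu_t\|_2^2=(\Lambda_t*\nu)(0)
\]
by Cauchy--Schwarz. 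Integrating against $d\sigma$ gives $\psi_{\alpha,\mu}(r)\le\psi_{\alpha,\mu}(0)$ and hence finiteness everywhere.

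For the strict inequality when $r\ne0$, equality in Cauchy--Schwarz would force $\mu_t(\cdot-r)=\mu_t$ a.e. (equal $L^2$-norms and nonnegativity exclude any nontrivial proportionality constant), i.e. $\mu_t$ would be $r$-periodic; but a nonzero $L^1(\bbR)$ function cannot be periodic, whereas $\int\mu_t=t>0$. Thus $(\Lambda_t*\nu)(r)<(\Lambda_t*\nu)(0)$ strictly for every $t$, and since $\sigma$ is not the zero measure and the majorant $\int(\Lambda_t*\nu)(0)\,d\sigma(t)=\psi_{\alpha,\mu}(0)$ is finite, integrating the strict pointwise-in-$t$ inequality yields $\psi_{\alpha,\mu}(r)<\psi_{\alpha,\mu}(0)$ (the case $r=0$ being trivial equality). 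Continuity follows from the same representation by dominated convergence in $t$: each $r\mapsto(\Lambda_t*\nu)(r)$ is continuous (convolution of the finite measure $\nu$ with the bounded continuous $\Lambda_t$), and the bound $(\Lambda_t*\nu)(r)\le(\Lambda_t*\nu)(0)$ supplies an $r$-independent $\sigma$-integrable majorant. I expect the main obstacle to be the rigorous treatment of the singular, non-integrable-at-the-origin kernel together with possibly infinite integrals; the tent decomposition is exactly what reduces everything to the bounded $L^2$ function $\mu_t$, where Cauchy--Schwarz and its equality case apply cleanly. (Equivalently one could argue via Fourier analysis, using $\widehat g(\xi)=c_\alpha|\xi|^{\alpha-1}\ge0$ and $\widehat\nu=|\widehat\mu|^2\ge0$ to write $\psi_{\alpha,\mu}(r)=c_\alpha\int\cos(2\pi r\xi)|\xi|^{\alpha-1}|\widehat\mu(\xi)|^2\,d\xi$, manifestly maximised at $r=0$; but this relies on the energy--Fourier identity, e.g. from Mattila~\cite{M92}, whereas the Cauchy--Schwarz route is self-contained.)
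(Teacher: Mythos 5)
Your proof is correct, but it takes a genuinely different route from the paper's. Both arguments begin identically, rewriting $\psi_{\alpha,\mu}(r)=(f_\alpha*\nu)(r)$ with $f_\alpha(s)=|s|^{-\alpha}$ and $\nu=\mu*(-\mu)$; after that they diverge. The paper works on the Fourier side: it observes $\hat\nu=|\hat\mu|^2\ge 0$, invokes the classical formula $\hat f_\alpha(t)=C_\alpha|t|^{\alpha-1}\ge 0$ for $0<\alpha<1$, and applies Fourier inversion, so that
\[
\psi_{\alpha,\mu}(r)=\frac{1}{2\pi}\int_{\bbR} e^{-itr}\hat f_\alpha(t)\hat\nu(t)\,dt\le\psi_{\alpha,\mu}(0),
\]
with continuity coming from the fact that $\psi_{\alpha,\mu}$ is the inverse transform of an $L^1$ function. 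You instead stay in physical space: the tent decomposition $|s|^{-\alpha}=\int_0^\infty(t-|s|)_+\,\alpha(\alpha+1)t^{-\alpha-2}\,dt$ together with the factorization $\Lambda_t=\mathds{1}_{[-t/2,t/2]}*\mathds{1}_{[-t/2,t/2]}$ reduces everything to the $L^2$ autocorrelation of the genuine function $\mu_t=\mathds{1}_{[-t/2,t/2]}*\mu$, where Cauchy--Schwarz applies; this factorization is exactly the physical-space counterpart of the positivity of $\hat f_\alpha\hat\nu$ that drives the paper's proof. What each buys: the paper's argument is shorter, but it leans on the explicit Fourier transform of the Riesz kernel and on the inversion theorem, and, as displayed, its estimate only yields the non-strict inequality $\psi_{\alpha,\mu}(r)\le\psi_{\alpha,\mu}(0)$ --- the strictness asserted in the lemma needs an extra remark (e.g.\ that $\cos(tr)<1$ for a.e.\ $t$ while $\hat f_\alpha\hat\nu>0$ near the origin). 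Your route is more elementary and self-contained, and it delivers the strict inequality cleanly through the equality case of Cauchy--Schwarz (a nonzero $r$-periodic function cannot lie in $L^1(\bbR)$, whereas $\int\mu_t=t>0$), as well as continuity by dominated convergence against the $\sigma$-integrable majorant $(\Lambda_t*\nu)(0)$; the price is a longer argument and the verification of the tent identity, which you computed correctly.
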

\begin{proof}
 Let us denote $\nu \colon = \mu*(-\mu)$. It is easy to see that its Fourier
 transform is a nonnegative function: 
 $$
     \hat \nu(t) = \int_\bbR e^{-itz} \nu(dz) = \int_\bbR \int_\bbR e^{-it(x-y)}
     \mu(dx) \mu(dy) = \hat \mu(t) \overline{\hat \mu(t)} = |\mu(t)|^2 \ge 0.
 $$
 We may write
  $$
  \psi_{\alpha,\mu}(r) = \int_{\bbR^2} |x-y-r|^{-\alpha} \mu(dx) \mu(dy) = \int_\bbR
  |z - r|^{-\alpha} \nu(dz).
  $$ 
 Then the desired inequality $\psi_{\alpha,\mu}(r)<\psi_{\alpha,\mu}(0)$ for all $r \in \mathbb R$ is equivalent to 
 $$ 
 \int_{\mathbb R} |z-r|^{-\alpha} \nu (d z) \le \int_{\bbR} |
 z|^{-\alpha} \nu(d z) .
 $$  
 Let us consider the function $f_\alpha(s) \eqdef |s|^{-\alpha}$. Its
 Fourier transform is known\footnote{One possible
 approach is via the Gamma function. We rewrite $|s|^{-\alpha} = \frac{2
 \pi^{\alpha/2}}{\Gamma(\alpha/2)}\int_0^\infty t^{\alpha-1} e^{-\pi t^2 s^2}
 dt$ and compute the Fourier transform of the latter by swapping the order of
 integrals.} to be
 \begin{equation}
   \label{eq:FTf}
   \hat f_\alpha(t) = \frac{\pi^{\alpha-1/2}\Gamma\left( (1-\alpha)/2
   \right)}{\Gamma(\alpha/2)} |t|^{\alpha-1} = C_\alpha |t|^{\alpha-1} \ge 0, \quad 0<\alpha<1.
 \end{equation}
 where $C_\alpha = \frac{\pi^{\alpha-1/2}\Gamma\left( (1-\alpha)/2
   \right)}{\Gamma(\alpha/2)}$. 
 Therefore $\widehat{f_\alpha * \nu_\lambda} = \hat  f_\alpha \cdot \hat
 \nu_\lambda $ is real and non-negative. 
Using the inverse Fourier transform formula we obtain an upper bound.
\begin{multline*}
\psi_{\alpha,\mu}(r) = \int_{\mathbb R} |z-r|^{-\alpha} \nu (dz) = 
   (f_\alpha * \nu )(r) =    \frac{1}{2\pi} \int_{\mathbb R}
   e^{-itr}\hat f_\alpha(t) \cdot \hat \nu  (t) dt \\ \le   \frac{1}{2\pi} \int_{\mathbb R}
   \hat f_\alpha(t) \cdot \hat \nu  (t) dt = \psi_{\alpha,\mu}(0) <\infty.
 \end{multline*}

The function $\psi_{\alpha,\lambda}$ is continuous since it is an inverse Fourier
 transform of an $L_1$ function. 

\end{proof}

\begin{remark}
\label{rem:a-bound}
Let $m = \inf\{ x \mid \supp \mu*(-\mu) \subseteq [-x,x]\}$ and let $J =
[-a,a]\supsetneq [-m,m]$. 
Then $\psi_{\alpha,\mu}$ has a bounded continuous extension to $\mathbb R \setminus 
J$ 
with an upper bound
$$
\psi_{\alpha,\mu}(r)\le  (m-a)^{-\alpha} \quad \mbox{ for all } r, \, |r| > a.
$$
\end{remark}
Therefore, the behaviour of $\psi_{\alpha,\mu}$ on $\supp\mu*(-\mu)$ is the most
important to us. 
The next Proposition ties together the symmetric diffusion operator
$\Ds_{\alpha,\mS}$, a family of functions $\psi_{\alpha,\mu}$, and the action on
measures $T_{\mS}$.  
\begin{proposition}
    \label{prop:dspsi}
    Let $\mS(\lambda,\bar c, \bar p)$ be an iterated function scheme of~$n$ similaritites. 
   Let~$\mu$ be a probability measure such that $\supp \mu \subset J$ for a
   closed interval~$J$. Assume that for some $\alpha>0$ the function
   $\psi_{\alpha,\mu}$ is bounded.
   Then 
   $$
   \psi_{\alpha,T_{\mS} \mu} = \Ds_{\alpha,\mS}\psi_{\alpha,\mu}.
   $$
\end{proposition}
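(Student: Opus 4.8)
The plan is to compute $\psi_{\alpha,T_{\mS}\mu}(r)$ directly from the definition~\eqref{eq:psi-def} and rearrange it into the form~\eqref{eq:dif-twoway} applied to $\psi_{\alpha,\mu}$. Writing $\nu \eqdef T_{\mS}\mu = \sum_{j=1}^n p_j (f_j)_*\mu$ and substituting into~\eqref{eq:psi-def}, I would first expand both copies of $\nu$ and interchange the finite double sum with the double integral. Since the integrand $|(x-y)-r|^{-\alpha}$ is nonnegative, this interchange is valid by linearity and nonnegativity (Tonelli), with no integrability hypothesis needed at this stage, giving
\begin{equation*}
\psi_{\alpha,\nu}(r) = \sum_{i,j=1}^n p_i p_j \int_{\bbR}\int_{\bbR} |(x-y)-r|^{-\alpha}\,(f_i)_*\mu(dx)\,(f_j)_*\mu(dy).
\end{equation*}

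Next I would apply the change-of-variables formula for push-forwards, replacing $x = f_i(u)$ and $y = f_j(v)$ and integrating against $\mu(du)\,\mu(dv)$. Here the crucial structural feature is that all maps share the same contraction ratio $\lambda$: since $f_i(u)-f_j(v) = \lambda(u-v) - c_i + c_j$, the difference of the images depends on $u,v$ only through $\lambda(u-v)$, with an additive constant governed by $c_i, c_j$. Factoring $\lambda$ out of $|\lambda(u-v) - c_i + c_j - r|^{-\alpha}$ produces the scalar $\lambda^{-\alpha}$ together with an integrand of the form $\bigl|(u-v) - \tfrac{r + c_i - c_j}{\lambda}\bigr|^{-\alpha}$. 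Recognising the resulting inner double integral as $\psi_{\alpha,\mu}$ evaluated at the shifted point gives
\begin{equation*}
\psi_{\alpha,\nu}(r) = \lambda^{-\alpha}\sum_{i,j=1}^n p_i p_j\, \psi_{\alpha,\mu}\!\left(\frac{r + c_i - c_j}{\lambda}\right).
\end{equation*}

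Finally, I would reconcile this with the definition~\eqref{eq:dif-twoway}, whose argument is $\frac{r - c_i + c_j}{\lambda}$. The two agree after relabelling the summation indices $i \leftrightarrow j$ (which leaves the weight $p_i p_j$ invariant), or equivalently by the evenness of $\psi_{\alpha,\mu}$, which itself follows from swapping $x$ and $y$ in~\eqref{eq:psi-def}. This yields exactly $[\Ds_{\alpha,\mS}\psi_{\alpha,\mu}](r)$, as claimed. I do not expect a serious obstacle, since the argument is essentially a bookkeeping computation exploiting the common scaling factor. The only points requiring care are the sign/relabelling step just described and the role of the standing hypothesis: the boundedness of $\psi_{\alpha,\mu}$ (equivalently, finiteness of $I(\mu,\alpha)=\psi_{\alpha,\mu}(0)$, which by Lemma~\ref{lem:psi} forces $\psi_{\alpha,\mu}$ to be finite at every point) guarantees that each term on the right is finite, so that the identity is a genuine equality of finite quantities rather than a trivial $+\infty = +\infty$.
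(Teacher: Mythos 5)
Your proposal is correct and takes essentially the same route as the paper's proof: a direct expansion of $\psi_{\alpha,T_{\mS}\mu}$ using the push-forward change of variables, factoring out $\lambda^{-\alpha}$ from the common contraction ratio, and recognising the resulting sum as $[\Ds_{\alpha,\mS}\psi_{\alpha,\mu}](r)$. Your extra care about the $i\leftrightarrow j$ relabelling (equivalently the evenness of $\psi_{\alpha,\mu}$) and the Tonelli interchange only makes explicit what the paper's ``straightforward computation'' leaves implicit.
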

\begin{proof}
    For convenience, recall the definition of the symmetric diffusion
    operator~\eqref{eq:dif-twoway}: 
    $$
    [\Ds_{\alpha,\mS}\psi](x) := \lambda^{-\alpha} \cdot \sum_{i,j=1}^k p_i p_j
    \cdot \psi\left(\frac{x+c_i-c_j}{\lambda}\right). 
    $$
    By straightforward computation, 
    \begin{align*}
        \psi_{\alpha,T_{\mS}\mu}(r) &= \int \int|y-(x-r)|^{-\alpha}
        T_{\mS}\mu(dx)  T_{\mS} \mu(dy) \\
        &=\int\int |y - (x -r)|^{-\alpha} \left(\sum p_j{f_j}_*\mu\right) (dx)
        \left( \sum p_k {f_k}_* \mu \right) (dy) \\
        &=\sum_{j,k} p_jp_k \int \int |f_j(x) - (f_k(y)-r)|^{-\alpha} \mu(dx)
        \mu(dy) \\
        &=\sum_{j,k} p_j p_k \int \int |\lambda x - c_j - \lambda y + c_k +
        r|^{-\alpha} \mu(dx) \mu(dy) \\
        &= \lambda^{-\alpha}\sum_{j,k} p_j p_k \int \int
        \left|x-y+\lambda^{-1}(r-c_j+c_k)\right|^{-\alpha} \mu(dx) \mu(dy) \\
        &=\lambda^{-\alpha} \sum_{j,k} p_j p_k
        \psi_{\alpha,\mu}\left(\lambda^{-1}(r-c_j+c_k)\right) = 
        [\Ds_{\alpha,\mS}\psi](r).
    \end{align*}
\end{proof}
\begin{corollary}
    \label{cor:fixedpoint}
Let $\mu$ be the unique stationary measure~$\mu$ of an iterated function scheme
$\mS(\lambda,\bar c, \bar p)$. Assume that $I(\mu,\alpha)$ is
bounded. Then $\psi(\alpha,\mu)$ is the fixed point of $\Ds_{\alpha,\mS}$. 
\end{corollary}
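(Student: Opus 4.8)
The plan is to read the result off directly from Proposition~\ref{prop:dspsi}, once its hypotheses have been checked. First I would record that, by definition, the stationary measure $\mu$ of the scheme $\mS(\lambda,\bar c,\bar p)$ satisfies the fixed-point equation $\mu = \sum_{j=1}^n p_j (f_j)_*\mu = T_{\mS}\mu$; in other words, stationarity of $\mu$ is precisely the statement $T_{\mS}\mu = \mu$. This is the only structural input beyond the Proposition itself.

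Next I would verify that $\psi_{\alpha,\mu}$ is bounded, since boundedness of $\psi_{\alpha,\mu}$ is the standing hypothesis of Proposition~\ref{prop:dspsi}. This is exactly where the assumption that $I(\mu,\alpha)$ is finite enters. By the definition~\eqref{eq:psi-def} of the family $\psi_{\alpha,\mu}$ we have $I(\mu,\alpha) = \psi_{\alpha,\mu}(0)$, so Lemma~\ref{lem:psi} applies and yields $\psi_{\alpha,\mu}(r) \le \psi_{\alpha,\mu}(0) < \infty$ for every $r \in \bbR$; hence $\psi_{\alpha,\mu}$ is bounded (and, in fact, continuous).

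Having checked the hypotheses, I would simply apply Proposition~\ref{prop:dspsi} to this measure $\mu$, which gives $\psi_{\alpha,T_{\mS}\mu} = \Ds_{\alpha,\mS}\psi_{\alpha,\mu}$. Substituting the stationarity relation $T_{\mS}\mu = \mu$ into the left-hand side collapses it to $\psi_{\alpha,\mu}$, so that $\psi_{\alpha,\mu} = \Ds_{\alpha,\mS}\psi_{\alpha,\mu}$, which is exactly the assertion that $\psi_{\alpha,\mu}$ is a fixed point of $\Ds_{\alpha,\mS}$.

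I do not expect any genuine obstacle: the corollary is a one-line consequence of the Proposition, and the only point requiring care is confirming the boundedness hypothesis, which Lemma~\ref{lem:psi} supplies at no cost from the finiteness of $I(\mu,\alpha)$. All of the conceptual content lives in Proposition~\ref{prop:dspsi} (the intertwining of the action $T_{\mS}$ on measures with the operator $\Ds_{\alpha,\mS}$ on the functions $\psi_{\alpha,\mu}$); the present statement merely specializes that intertwining to the $T_{\mS}$-fixed point $\mu$.
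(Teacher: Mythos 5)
Your proposal is correct and follows exactly the route the paper intends: the corollary is stated immediately after Proposition~\ref{prop:dspsi} with no separate proof, precisely because it follows by substituting the stationarity relation $T_{\mS}\mu=\mu$ into the identity $\psi_{\alpha,T_{\mS}\mu}=\Ds_{\alpha,\mS}\psi_{\alpha,\mu}$, with Lemma~\ref{lem:psi} supplying the boundedness hypothesis from the finiteness of $I(\mu,\alpha)=\psi_{\alpha,\mu}(0)$. Your explicit verification of that hypothesis is the only step the paper leaves tacit, and you handle it correctly.
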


\begin{remark}
Of course the constant function $f \equiv 1 \subset L^\infty(\mathbb R)$  is an eigenvector for the 
diffusion operator with the eigenvalue $\lambda^{-\alpha}$,  but it does
not satisfy the hypothesis of Theorem~\ref{t:certificate-D2} and is of no use to
us.  
\end{remark}

In the next section we give a proof for Theorem~\ref{t:certificate-D2}, which
provides the grounds for the numerical estimates of the correlation dimension.  
\subsection{Random processes viewpoint}
The random processes viewpoint will be used in the arguments for
Theorems~\ref{t:certificate-D2}, \ref{t:finding-D2}, \ref{t:certificate-D1},
and~\ref{t:finding-D1}. We would like therefore
to make a preparatory description of the  setup. 
\begin{definition}
    \label{def:compproc}
Let $\mS\left(\lambda,\bar c,\bar p \right)$ be an iterated function scheme.
We want to consider the set of pairwise differences $\{d_k \mid d_k = c_i - c_j \}$, a probability vector $q_k = \sum\limits_{i,j \colon c_i - c_j =
d_k} p_i p_j$, and to define \emph{a complementary} iterated function scheme~$\mS(\lambda,\bar d,
\bar q)$:
$$
g_k(x) = \lambda x - d_k. 
$$
\end{definition}
If $\mu$ is the unique stationary measure of $\mS(\lambda,\bar c, \bar p)$ then 
the unique stationary measure of the complementary iterated function scheme is $\mu*(-\mu)$.
Since the measure $\mu$ is compactly supported, we may define: 
\begin{equation}
    \label{eq:suppmu-mu}
    m : = \inf\{x \mid \supp(\mu*(-\mu)) \subseteq [-x,x] \}.
\end{equation}
The symmetric diffusion operator can be written in terms of the maps of 
the scheme~$\mS(\lambda,\bar d, \bar q)$:
\begin{equation}
    \label{eq:d2-back}
    [\Ds_{\alpha,\mS}\psi](x) = \lambda^{-\alpha} \sum_{i,j=1}^n p_i p_j
    \psi\left(\frac{x+c_i-c_j}{\lambda}\right) = 
    \lambda^{-\alpha} \sum_k q_k  \psi\left( g_k^{-1}(x)\right) .
\end{equation}
This observation brings us to the idea of introducing the backward process associated to
$\mS(\lambda,\bar d, \bar q)$, which can be defined as follows.  

Let $x$ and $y$ be two independent $\mu$-distributed random points 
\begin{equation}
  \label{eq:xy}
  x = \sum_{k=0}^\infty \xi_k\lambda^k,  \quad   y = \sum_{k=0}^\infty \eta_k \lambda^k.
\end{equation}
where $\xi_k, \eta_k$ are i.i.d. random variables assuming values $c_j$ with
probabilities $p_j$, $j = 1, \ldots, n$.  
Consider the random  process given by renormalized differences 
\begin{equation}
    \label{eq:zkproc}
    z_k := \lambda^{-k-1}\left( \sum_{j=0}^k \xi_j\lambda^j -
\sum_{j=0}^k \eta_j\lambda^j 
\right). 
\end{equation}
It is easy to see that $z_{k+1} = \lambda^{-1} (z_k + \zeta)$, where $\zeta = \xi_{k+1}
-\eta_{k+1}$ is a random variable assuming values $d_k=c_i-c_j$ with 
probabilities $q_k = \sum\limits_{i,j \colon c_i - c_j =
d_k} p_i p_j$. 
\begin{definition}
    \label{def:backproc}
We call $z_k$ the backward process associated to~$\mS(\lambda,\bar d,\bar q)$.
\end{definition}
With this notation, the symmetric diffusion operator takes the form 
\begin{equation}
    \label{eq:d2-expect}
    [\Ds_{\alpha,\mS}\psi](x) = 
    \lambda^{-\alpha} \sum_k q_k  \psi\left( g_k^{-1}(x)\right) 
    = \lambda^{-\alpha} \mathbb E \left(\psi\left(\frac{x+\zeta}\lambda\right)
    \right).
\end{equation}
\noindent We conclude this preparatory discussion by commenting on the r\^ole of the
admissible interval.
\begin{lemma}
    \label{lem:return}
If a trajectory of the random process $z_n$ leaves an admissible interval~$J$ for the
$\mS(\lambda,\bar c, \bar p)$, then it never returns to it. In other words, if
there exists $k$ such that $z_k \not\in J$, then $z_n\not\in J$ for any $n>k$. 
\end{lemma}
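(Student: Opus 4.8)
The plan is to reduce everything to a single one-step statement and to read off the conclusion from the geometry of the complementary scheme. Since the assertion is that $z_k \notin J$ implies $z_n \notin J$ for all $n>k$, by induction it suffices to prove that $z_k \notin J$ forces $z_{k+1} \notin J$. The key observation I would exploit is that the backward process advances by the \emph{inverse} (hence expanding) maps of the complementary scheme $\mS(\lambda,\bar d,\bar q)$ from Definition~\ref{def:compproc}: indeed $z_{k+1} = \lambda^{-1}(z_k+\zeta) = g^{-1}(z_k)$, where $g(x)=\lambda x - d_k$ with $d_k=\zeta$ is one of the contractions of $\mS(\lambda,\bar d,\bar q)$. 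The whole argument then amounts to showing that admissibility of $J$ is precisely the hypothesis making $\mathbb R\setminus J$ forward-invariant under all such inverse maps.

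First I would locate the fixed points. Each map $f_i(x)=\lambda x + c_i$ of the original scheme has fixed point $a_i = c_i/(1-\lambda)$, and since $f_i(J_\lambda)\subseteq J_\lambda$ we have $a_i\in J_\lambda$. The complementary map $g$ with $d_k=c_i-c_j$ has fixed point $p = -d_k/(1-\lambda) = a_j-a_i$, which therefore lies in the difference set $\overline{\{x-y\mid x,y\in J_\lambda\}}$. By the admissibility condition~\eqref{eq:admit} this difference set sits inside the interior of $J$, so every complementary fixed point $p$ lies in $J$.

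Next I would convert this into invariance of $J$. Writing $g(x)=p+\lambda(x-p)$ exhibits $g$ as the interval $J$ scaled by the factor $\lambda<1$ towards its fixed point $p\in J$; hence $g(J)\subseteq J$. Applying the order-preserving bijection $g^{-1}$ gives $g^{-1}(J)\supseteq J$, and taking complements yields $g^{-1}(\mathbb R\setminus J)\subseteq \mathbb R\setminus J$. Specialising to the map that realises the transition $z_k\mapsto z_{k+1}$, I conclude that $z_k\in\mathbb R\setminus J$ forces $z_{k+1}\in\mathbb R\setminus J$, and induction on $n$ finishes the proof.

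I expect there to be no genuine obstacle here, only two bookkeeping points that must be handled with care: correctly identifying that the process moves by the expanding inverse maps $g^{-1}$ rather than the contractions $g$ themselves, and verifying that admissibility is exactly what places every complementary fixed point $a_j-a_i$ inside $J$. As a sanity check, the same monotone ``stays on the same side'' behaviour can be obtained directly from the recurrence using the elementary bound $\zeta=c_i-c_j\in[-(1-\lambda)\,|J_\lambda|,\,(1-\lambda)\,|J_\lambda|]$ (a consequence of $f_i(J_\lambda)\subseteq J_\lambda$), which reproduces the conclusion without explicitly invoking the fixed points.
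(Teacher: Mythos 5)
Your proof is correct, and it takes a genuinely different route from the paper's. The paper argues quantitatively: it first bounds the translation parameters of the complementary scheme by $\max|d_k|\le m(1-\lambda)$, where $m$ is the radius of $\supp\mu*(-\mu)$ as in~\eqref{eq:suppmu-mu} (the key observation being that $\max|d_k|/(1-\lambda)$ is itself a point of $\supp\mu*(-\mu)$), and then checks in one step that if $z_k$ lies beyond the right endpoint $b_2>m$ of $J$, then $z_{k+1}=(z_k+d)/\lambda> b_2$, so the expansion by $\lambda^{-1}$ dominates the bounded noise and the orbit escapes monotonically. You instead argue structurally: each complementary contraction $g(x)=\lambda x-d$ with $d=c_i-c_j$ has fixed point $a_j-a_i$, which admissibility~\eqref{eq:admit} places in the interior of $J$, whence $g(J)\subseteq J$ by convexity of $J$, whence $g^{-1}(\mathbb R\setminus J)\subseteq \mathbb R\setminus J$, which is exactly the one-step claim. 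Your argument is purely set-theoretic and never invokes the stationary measure or its support, which is arguably cleaner than the paper's appeal to $\supp\mu*(-\mu)$; indeed, your closing ``sanity check'' bound $|d|\le(1-\lambda)|J_\lambda|$ is precisely the elementary substitute for the paper's $|d|\le(1-\lambda)m$, so your fallback route essentially reproduces the paper's proof with the measure removed. What the paper's version buys in exchange is a quantitative escape estimate in the same notation ($m$, $a$, the endpoints of $J$) that it reuses immediately afterwards in Lemma~\ref{lem:pdim}. One minor point of care: if $J_\lambda$ is not closed, the fixed points $a_i=c_i/(1-\lambda)$ lie only in $\overline{J_\lambda}$, but since you place $a_j-a_i$ in the \emph{closed} difference set $\overline{\{x-y\mid x,y\in J_\lambda\}}$, your argument handles this correctly.
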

\begin{proof}
Evidently, 
$\supp \mu*(-\mu) \subseteq \{x - y \mid x, y \in \supp \mu\}$. 
At the same time, 
$$
\frac{\max |d_k|}{1-\lambda} = \max |d_k| \sum_{k=1}^\infty \lambda^k \in \supp
\mu*(-\mu)
$$ 
and in particular $\frac{\max |d_k|}{1-\lambda} < m$, where $m$ is defined
by~\eqref{eq:suppmu-mu}. Thus $\max |d_k| < m(1-\lambda)$.  
Assume that $J = [b_1,b_2] \supsetneq [-m,m]$ is an admissible interval
and $z_k \not\in J$. Without loss of generality we may assume that $z_k > b_2 $ then
$$
|z_{k+1}| = \Bigl|\frac{z_k+d_k}{\lambda}\Bigr| \ge \frac{|z_k| -
\max|d_k|}{\lambda} \ge \frac{b_2 - \max|d_k|}\lambda > \frac{b_2 -
m(1-\lambda)}\lambda > b_2.
$$
The case $z_k< b_1$ is similar.
\end{proof}

\subsubsection{Proof of Theorem~\ref{t:certificate-D2}}
For the convenience of the reader, we recall the statement. 
\paragraph{Theorem~\ref{t:certificate-D2}}
{\it \kern-12pt Let $\mS(\lambda,\bar c, \bar p)$ be an iterated function scheme of
similarities. Assume that for some $\alpha>0$ there exists an admissible compact interval $J \subset
\mathbb R$ and a function $\psi \in \mathcal F_J$ such that
\begin{equation*}
[\Ds_{\alpha,\mS}\psi] \prec \psi.
\end{equation*}
Then the correlation dimension of the $\mS$-stationary measure $\mu$ is bounded
from below by~$\alpha$:}
\[
D_2(\mu)\ge \alpha.
\]

The proof of Theorem~\ref{t:certificate-D2} relies on the following lemma which 
relates the time for which the backward process of the complementary iterated
function scheme 
remains in an admissible interval~$J$ to the correlation dimension of the measure~$\mu$. 
 \begin{lemma}
   \label{lem:pdim}
   Let $J = [-a,a]$ be an admissible interval for
   $\mS(\lambda,\bar c,\bar p)$ with the stationary measure~$\mu$. Let $z_n$ be the backward process for the
   complementary scheme. If $\mathbb P(z_n \in  J) \le C_0 \lambda^{\alpha n}$ for some
   constant $C_0$, independent of $z_0$, then $D_2 (\mu) \ge \alpha$. 
 \end{lemma}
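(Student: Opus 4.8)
The plan is to estimate the energy integral directly. Fix an arbitrary $\alpha'<\alpha$ and write $I(\mu,\alpha')=\psi_{\alpha',\mu}(0)=\mathbb E\bigl[|x-y|^{-\alpha'}\bigr]$, where $x,y$ are two independent $\mu$-distributed points expressed as in~\eqref{eq:xy}. Since $D_2(\mu)=\sup\{\alpha' : I(\mu,\alpha')<\infty\}$ by~\eqref{eq:D2-bis}, it suffices to show $I(\mu,\alpha')<\infty$ for every such $\alpha'$. The mechanism is to convert smallness of $|x-y|$ into a long sojourn of the backward process $z_n$ inside $J$, and then cash in the hypothesised geometric decay $\mathbb P(z_n\in J)\le C_0\lambda^{\alpha n}$.

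First I would record the scaling identity underlying the whole argument. Setting $\zeta_j=\xi_j-\eta_j$ and $w_{k+1}=\sum_{i\ge 0}\zeta_{k+1+i}\lambda^i$ (a copy of $x-y$, hence distributed as $\mu*(-\mu)$ and so satisfying $|w_{k+1}|\le m$ almost surely, with $m$ as in~\eqref{eq:suppmu-mu}), a direct computation from~\eqref{eq:zkproc} gives, for every $k\ge 0$,
\[
x-y=\lambda^{k+1}\bigl(z_k+w_{k+1}\bigr).
\]
Because $J=[-a,a]$ is admissible, $[-m,m]=\supp\mu*(-\mu)$ lies in the interior of $J$, so $a>m$, which will make the lower bounds below strictly positive.

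Next I would introduce the first exit time $T:=\inf\{n\ge 0 : z_n\notin J\}$. Lemma~\ref{lem:return} ensures the trajectory never returns to $J$, so $\{z_n\in J\}=\{T>n\}$, and the hypothesis yields $\mathbb P(T=\infty)=\lim_n\mathbb P(z_n\in J)\le\lim_n C_0\lambda^{\alpha n}=0$; thus $T<\infty$ almost surely (and in particular $\mathbb P(x=y)=0$, since $x=y$ would force $z_n=-w_{n+1}\in[-m,m]\subset J$ for all $n$, i.e. $T=\infty$). The crucial point is the choice of index: on the event $\{T=t\}$ we have $z_t\notin J$, so $|z_t|>a$, and applying the scaling identity with $k=t$ gives the \emph{lower} bound $|x-y|=\lambda^{t+1}|z_t+w_{t+1}|\ge\lambda^{t+1}(a-m)>0$. (Using the last \emph{visited} index instead would only control $|z_{t-1}+w_t|$, which may be near $0$; this is exactly the bookkeeping one must get right.)

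Finally I would sum over $t$. Since $\{T\ge t\}\subseteq\{z_{t-1}\in J\}$ we get $\mathbb P(T=t)\le C_0\lambda^{\alpha(t-1)}$ for $t\ge 1$, while the $t=0$ term is a single finite constant. Combining this with the lower bound on $|x-y|$,
\[
I(\mu,\alpha')=\sum_{t=0}^\infty\mathbb E\bigl[|x-y|^{-\alpha'}\,\mathds 1_{\{T=t\}}\bigr]\le\sum_{t=0}^\infty (a-m)^{-\alpha'}\lambda^{-(t+1)\alpha'}\,\mathbb P(T=t),
\]
and each tail term is at most a constant multiple of $\lambda^{t(\alpha-\alpha')}$, a convergent geometric series because $\alpha'<\alpha$ and $0<\lambda<1$. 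Hence $I(\mu,\alpha')<\infty$ for every $\alpha'<\alpha$, which gives $D_2(\mu)\ge\alpha$. The only genuinely delicate steps are this index bookkeeping at the moment of exit and the (harmless) treatment of the null event $\{T=\infty\}$; the substantive inputs are the scaling identity and Lemma~\ref{lem:return}, after which the decay hypothesis does the rest.
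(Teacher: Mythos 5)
Your proof is correct and follows essentially the same route as the paper: both arguments rest on the scaling identity $x-y=\lambda^{n+1}(z_n+w_{n+1})$ with $|w_{n+1}|\le m<a$, and both conclude by showing that $I(\mu,\alpha')<\infty$ for every $\alpha'<\alpha$. The only difference is bookkeeping: the paper converts the hypothesis into the small-ball bound $\mathbb{P}(|x-y|\le r)\le C_1 r^{\alpha}$ and applies the layer-cake formula, whereas you partition by the first exit time $T$ and sum a geometric series --- the inclusion $\{T=t\}\subseteq\{|x-y|\ge(a-m)\lambda^{t+1}\}$ that you use is exactly the contrapositive of the paper's inclusion $\{|x-y|\le(a-m)\lambda^{n+1}\}\subseteq\{z_n\in J\}$, and your appeal to Lemma~\ref{lem:return} is inessential since only the definitional inclusion $\{T>n\}\subseteq\{z_n\in J\}$ is needed.
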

 \begin{proof}
Let~$m$ be as defined in~\eqref{eq:suppmu-mu}. Let $x$ and $y$ be two
independent $\mu$-distributed random variables defined by~\eqref{eq:xy}, and let
the backward process~$z_n$ be defined by~\eqref{eq:zkproc}.
Observe that the difference between $|x-y|$ and the finite sum 
$\left|\sum_{j=0}^n (\xi_j - \eta_j) \lambda^j\right|$ is no more than $m\lambda^{n+1}$.
By a straightforward calculation, we have
    \begin{equation*}
    \begin{split}
     \mathbb P \left(z_n \in J \right) &= \mathbb
     P\left(\lambda^{-n-1}\left|\sum_{j=0}^n (\xi_j - \eta_j) \lambda^j\right| \le  a \right) \\&
     = \mathbb P\left( \left|\sum_{j=0}^n (\xi_j - \eta_j)
     \lambda^j\right| \le a  \lambda^{n+1} \right)  
     \ge \mathbb P\left( |x-y| \le (a-m) \lambda^{n+1} \right).
    \end{split}
    \end{equation*}
   Therefore the hypothesis of the Lemma implies $\mathbb P\left( |x-y| \le
   (a-m)\lambda^{n+1}\right) \le C_0 \lambda^{\alpha n}$ and thus
   for any $r$ we have that 
   \begin{equation}
     \label{eq:l6p1}
     \mathbb P( |x-y| \le r) \le C_0 \cdot (a - m)^{-\alpha} r^\alpha = \colon
     C_1(m,a,\alpha) r^\alpha.
   \end{equation}
   In order to show that $D_2 (\mu) \ge \alpha$ it is sufficient to
   show that for any $\alpha^\prime < \alpha$ the integral 
   $\int_{\bbR^2} |s-t|^{-\alpha^\prime} \mu(ds) \mu(dt)$ is finite.
   Indeed, 
   $$
   \int_{\bbR^2}  |s-t|^{-\alpha^\prime} \mu(ds) \mu(dt)
    = \mathbb E (|x-y|^{-\alpha^\prime} ) = 
    \int_{\bbR} \mathbb P(|x-y|^{-\alpha^\prime} > r ) d r.
   $$
   Evidently, $\mathbb P(|x-y|<r) \le C_1(m,a,\alpha) r^\alpha$ implies $\mathbb
   P (|x-y|^{-\alpha^\prime} > r) \le \min(1, C_2 \cdot
   r^{-\alpha/\alpha^{\prime}})$ for some constant~$C_2$, which depends on~$m$,
   $a$, and~$\alpha$ only. Hence for some constants~$C_3$ and~$C_4$, which
   depend on~$m$,~$a$, and~$\alpha$, but do not depend on~$r$ we have that 
   $$
   \int_{\bbR^2}  |s-t|^{-\alpha^\prime} \mu(ds) \mu(dt) \le
   \int_{0}^{C_3} 1 d r + \int_{C_3}^{+\infty} r^{-\alpha/\alpha^{\prime}} dr < C_4,
   $$ 
   since $\frac{\alpha}{\alpha^{\prime}}>1$. 
 \end{proof}
 \bigskip
 
Finally, we can  proceed to the proof of  Theorem~\ref{t:certificate-D2}. We use the same  notation as above.  

\smallskip

  \begin{proof}[of Theorem~\ref{t:certificate-D2}]
   By the hypothesis of the Theorem there exists $\theta>0$ such that
   for any $x \in J$ we have that $\theta < \psi(x) <\theta^{-1}$.

   Consider a discrete random process defined by~$w_n = \lambda^{-\alpha n}
   \psi(z_n)$. Then for any $z_n \in  J$ taking into
   account~\eqref{eq:d2-expect}, we compute
   \begin{equation}
   \begin{split}
     \mathbb E (w_{n+1} \mid z_n ) & = \mathbb E
     (\lambda^{-\alpha (n+1)} \psi(z_{n+1}) \mid z_n) = \lambda^{-\alpha(n+1)} \mathbb
     E \left(\psi \left(\lambda^{-1} (z_n+\zeta) \right) \mid z_n  \right) \\ &=  
     \lambda^{-\alpha (n+1)} \mathbb E \left(\psi \left( \lambda^{-1}(
     z_n+\zeta) \right)\right) = \lambda^{-\alpha n} \Ds_{\alpha,\mS} \psi
     ( z_n ) \\& \le \lambda^{-\alpha n} \psi (z_n) = w_n. 
   \end{split}
    \label{eq:wnmart}
   \end{equation}
  Thus the process $w_n$ is a supermartingale, as $\mathbb E(w_{n+1} \mid z_n ) \le  w_n$. In particular, 
  \[
  \mathbb E w_n\le w_0=\psi(z_0)=\psi(0)<\theta^{-1}.
  \]
  On the other hand, 
  \[
    \mathbb E w_n = \lambda^{-\alpha
  n} \cdot \mathbb E \psi (z_n) \ge \lambda^{-\alpha n} \cdot \inf \psi |_{J} \cdot
  \mathbb P(z_n \in J) \ge \lambda^{-\alpha n} \cdot \theta \cdot \mathbb P(z_n \in J). 
  \]
  Therefore $\mathbb P (z_n \in J) \le \theta^{-2} \lambda^{\alpha n}$
  and the Theorem follows from Lemma~\ref{lem:pdim}.
\end{proof}

\subsection{Effectiveness of the algorithm}
Let $\mu$ be the stationary measure of an iterated function scheme of
similarities $\mS(\lambda,\bar c, \bar p)$. In this section we shall show that for any $\alpha < D_2(\mu)$ 
the method described in Section~\ref{sss:verifyalpha} will be able to
confirm this inequality, subject to computer resources and time.
In other words we shall show the following. 
\begin{proposition}
    \label{prop:D21}
Let $\mu$ be the stationary measure of an iterated function scheme of
similarities $\mS(\lambda,\bar c, \bar p)$. Assume that $\alpha < D_2(\mu)$. 
Then there exist:
\begin{enumerate}
    \item A sufficiently small $\varepsilon>0$ and an admissible interval
        $J_\Lambda$ for $\Lambda = B_\varepsilon(\lambda)$; 
    \item A sufficiently fine partition $\mathcal J$ of $J_\Lambda$;
    \item A sufficiently large $n \in \mathbb N$; and
    \item A sufficiently small $\vartheta >0$,
\end{enumerate}
so that the hypothesis of Corollary~\ref{cor:uniform} holds, more precisely, for $A
=\mathcal D_{\alpha,\Lambda,\mathcal J}$ we have  
$$
\hatop A^n \mathds{1}_{\mathcal J} \prec
\mathds{1}_{\mathcal J}. 
$$
\end{proposition}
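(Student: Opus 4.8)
The plan is to produce the four objects in three stages: first exhibit a genuine strict supersolution for the single operator $\Ds_{\alpha,\mS}$, then discretize it into a piecewise-constant supersolution for the uniform operator $\mathcal D_{\alpha,\Lambda,\mathcal J}$, and finally show that the concrete iteration launched from $\mathds{1}_{\mathcal J}$ is forced below this supersolution. For the first stage, fix any $\alpha'$ with $\alpha<\alpha'<D_2(\mu)$. Since $I(\mu,\alpha')<\infty$, Lemma~\ref{lem:psi} and Corollary~\ref{cor:fixedpoint} furnish a function $\psi^*:=\psi_{\alpha',\mu}$ which is continuous on all of $\mathbb R$, strictly positive, and hence (being continuous on the compact admissible interval $J$) bounded away from $0$ and $\infty$ on $J$, and which satisfies $\Ds_{\alpha',\mS}\psi^*=\psi^*$. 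Because the two operators differ only by their scalar prefactor, $\Ds_{\alpha,\mS}=\lambda^{\alpha'-\alpha}\Ds_{\alpha',\mS}$, and $0<\lambda<1$, we obtain a strict supersolution with a uniform multiplicative gap, $\Ds_{\alpha,\mS}\psi^*=\rho_0\,\psi^*\prec\psi^*$ with $\rho_0:=\lambda^{\alpha'-\alpha}<1$. The behaviour of $\psi^*$ outside $J$ is harmless: it stays bounded by Remark~\ref{rem:a-bound}, and by Lemma~\ref{lem:return} the underlying process never re-enters $J$ once it leaves.

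For the discretization stage, put $\Lambda=B_\varepsilon(\lambda)$, choose a partition $\mathcal J$ of $J$, and let $\psi_{pc}\in\mathcal F_{\mathcal J}$ be defined by $\psi_{pc}|_{J_k}=\sup_{J_k}\psi^*$. As $\varepsilon\to0$ and the mesh of $\mathcal J$ tends to $0$, uniform continuity of $\psi^*$ and of the maps $(x,\lambda)\mapsto (x-c_i+c_j)/\lambda$ forces the suprema in~\eqref{eq:D-Lambda} to converge to point evaluations, so $\mathcal D_{\alpha,\Lambda,\mathcal J}\psi_{pc}\to\Ds_{\alpha,\mS}\psi^*$ uniformly on $J$, while $\psi_{pc}\to\psi^*$ uniformly. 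Since $\Ds_{\alpha,\mS}\psi^*=\rho_0\psi^*$ with $\rho_0<1$, there exist a small $\varepsilon$, a fine $\mathcal J$, and a constant $\rho\in(\rho_0,1)$ with $A\psi_{pc}\preccurlyeq\rho\,\psi_{pc}$ on $J$, where $A:=\mathcal D_{\alpha,\Lambda,\mathcal J}$. After rescaling we may assume $\psi_{pc}\preccurlyeq\mathds{1}_{\mathcal J}$; write $\kappa:=\min_J\psi_{pc}>0$ and $M:=\max_J\psi_{pc}$.

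For the final stage, recall that $A$ is monotone and positively homogeneous on the finite-dimensional space $\mathcal F_{\mathcal J}$ and that $\hatop A$ is continuous there. The iterates $f_n:={\hatop A}^n\mathds{1}_{\mathcal J}$ satisfy $f_{n+1}=\min(Af_n+\vartheta,f_n)\preccurlyeq f_n$ and $f_n\succcurlyeq 0$, so they decrease to a limit $f_\infty\in\mathcal F_{\mathcal J}$ which is a fixed point of $\hatop A$; equivalently $Af_\infty\succcurlyeq f_\infty-\vartheta$ on every interval. Let $t^*:=\max_k (f_\infty/\psi_{pc})|_{J_k}$, attained on some $J_{k_0}$, so that $f_\infty\preccurlyeq t^*\psi_{pc}$. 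Monotonicity and homogeneity give $Af_\infty\preccurlyeq t^*A\psi_{pc}\preccurlyeq\rho\,t^*\psi_{pc}$, and evaluating on $J_{k_0}$ yields $f_\infty|_{J_{k_0}}-\vartheta\le\rho\,f_\infty|_{J_{k_0}}$, whence $t^*\le \vartheta/(\kappa(1-\rho))$ and $\max_J f_\infty\le \vartheta M/(\kappa(1-\rho))$. Choosing $\vartheta<\kappa(1-\rho)/M$ makes $f_\infty\prec\mathds{1}_{\mathcal J}$; since $\mathcal J$ is finite and $f_n\downarrow f_\infty$, some finite $n$ already satisfies ${\hatop A}^n\mathds{1}_{\mathcal J}\prec\mathds{1}_{\mathcal J}$, which is exactly the hypothesis of Corollary~\ref{cor:uniform}, giving $D_2(\mu_\lambda)\ge\alpha$ for all $\lambda\in\Lambda$.

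I expect the main obstacle to be the passage from the mere existence of a good test function to the behaviour of the specific iteration started from $\mathds{1}_{\mathcal J}$. The device that resolves it is to identify the monotone limit $f_\infty$ as a $\hatop A$-fixed point and to compare it with the strict supersolution $\psi_{pc}$, which converts the additive slack $\vartheta$ and the multiplicative gap $1-\rho$ into an explicit ceiling $\vartheta M/(\kappa(1-\rho))$ on $\max_J f_\infty$. A secondary technical point is the supremum appearing in~\eqref{eq:D-Lambda}: this is precisely why I work with the continuous fixed point $\psi^*$ rather than an arbitrary piecewise-constant test function, so that the suprema over the small boxes $J_k\times\Lambda$ converge to point values and create no spurious gap at the jump discontinuities of a piecewise-constant profile.
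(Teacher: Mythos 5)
Your proof is correct, but it follows a genuinely different route from the paper's, in two places. First, the source of the strict supersolution: the paper (Lemma~\ref{lem:find-D2}) works at the exponent $\alpha$ itself, truncates the fixed point $\psi_{\alpha,\mu}$ of $\Ds_{\alpha,\mS}$ outside an admissible interval, and uses the escape property of the backward process (Lemma~\ref{lem:return}) to show that after $n$ iterations enough mass has ``leaked out'' to produce a strict factor $(1-\eps)$, i.e. $(\Ds_{\alpha,\mS})^n\varphi\prec(1-\eps)\varphi$. You instead pick $\alpha'\in(\alpha,D_2(\mu))$ and exploit the identity $\Ds_{\alpha,\mS}=\lambda^{\alpha'-\alpha}\Ds_{\alpha',\mS}$, so that the fixed point $\psi_{\alpha',\mu}$ becomes an eigenfunction of $\Ds_{\alpha,\mS}$ with eigenvalue $\rho_0=\lambda^{\alpha'-\alpha}<1$: a one-step multiplicative gap with no truncation or escape argument. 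Second, the treatment of the $\vartheta$-perturbation: the paper first fixes an iteration count $n'$ with $(\Ds_{\alpha,\mS})^{n'}\mathds{1}_J\prec\frac12\mathds{1}_J$, transfers this to $\mathcal D_{\alpha,\Lambda,\mathcal J}$ by continuity in the partition and $\Lambda$, and then to $\hatop{A}$ by uniform convergence as $\vartheta\to0$ for that fixed $n'$; you instead study the decreasing iteration $f_n={\hatop{A}}^n\mathds{1}_{\mathcal J}$ directly, identify its limit $f_\infty$ as a fixed point of $\hatop{A}$ in the finite-dimensional space $\mathcal F_{\mathcal J}$, and compare it against the discretized supersolution to get the explicit ceiling $\max_J f_\infty\le\vartheta M/(\kappa(1-\rho))$. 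What each buys: the paper's construction is the one that generalizes to the asymmetric operator $\Da_{\alpha,\mathcal T}$ (Proposition~\ref{prop:D11}), where no eigenfunction or scalar rescaling trick is available; your argument is shorter, avoids the two-scale iteration $n'=nm$, and is more quantitative, producing an explicit admissible threshold for $\vartheta$ in terms of the gap $1-\rho$ and the bounds $\kappa$, $M$ on the test function, which is closer to what the implemented algorithm actually needs. Two small points you should tighten: (i) your claim that $\mathcal D_{\alpha,\Lambda,\mathcal J}\psi_{pc}\to\Ds_{\alpha,\mS}\psi^*$ uniformly on $J$ is not literally true near $\partial J$, because $\psi_{pc}$ is truncated to vanish outside $J$ while $\psi^*$ is not; but since truncation only lowers values and you only need the upper bound $\mathcal D_{\alpha,\Lambda,\mathcal J}\psi_{pc}\preccurlyeq\rho_0\psi^*+\mathrm{error}\preccurlyeq\rho\,\psi_{pc}$, the one-sided estimate suffices and the argument stands; (ii) you should remark that an interval $J$ admissible for $\lambda$ remains admissible for all of $\Lambda=B_\varepsilon(\lambda)$ once $\varepsilon$ is small, since the supports $J_{\lambda'}$ vary continuously and admissibility is a strict (open) inclusion.
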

Theorem~\ref{t:finding-D2} follows immeditately from Proposition~\ref{prop:D21}.
We begin with the following technical fact. 
\begin{lemma}
    \label{lem:find-D2}
 Let $\mS(\lambda,\bar c,\bar p)$ be an iterated function scheme.
 Let $\mu$ be the unique stationary measure and assume that
 $\alpha<D_2(\mu)$.      
 Then for any sufficiently small $\varepsilon>0$ there exist an admissible interval~$J$,
 a continuous function~$\varphi$, $\varphi|_J > \theta > 0$, and $n$ such that
 $(\Ds_{\alpha,\mS})^n \varphi\prec(1-\eps)\varphi$.
\end{lemma}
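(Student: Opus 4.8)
The plan is to exploit the fact, established in Corollary~\ref{cor:fixedpoint}, that the energy kernel $\psi_{\beta,\mu}$ is an \emph{exact} fixed point of $\Ds_{\beta,\mS}$ whenever $\beta < D_2(\mu)$, and to convert this into a strict supersolution for the smaller exponent $\alpha$ by a scaling trick. First I would fix an auxiliary exponent $\beta$ with $\alpha < \beta < D_2(\mu)$, which is possible precisely because $\alpha < D_2(\mu)$ strictly. Since $\beta < D_2(\mu)$, the energy $I(\mu,\beta) = \psi_{\beta,\mu}(0)$ is finite, so Lemma~\ref{lem:psi} guarantees that $\varphi := \psi_{\beta,\mu}$ is finite everywhere, continuous, strictly positive, and bounded above (by $\psi_{\beta,\mu}(0)$), while Corollary~\ref{cor:fixedpoint} gives $\Ds_{\beta,\mS}\varphi = \varphi$. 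Choosing any admissible compact interval $J$, the restriction $\varphi|_J$ is then a continuous positive function on a compact set, hence bounded away from $0$, so $\varphi|_J > \theta > 0$ for some $\theta$.

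The key observation is that the operators $\Ds_{\alpha,\mS}$ and $\Ds_{\beta,\mS}$ differ only through the scalar prefactor in~\eqref{eq:dif-twoway}: writing $S\psi(x) := \sum_{i,j} p_i p_j\, \psi\!\left((x - c_i + c_j)/\lambda\right)$ for the common unscaled part, one has $\Ds_{\gamma,\mS} = \lambda^{-\gamma} S$ for every exponent $\gamma$. From $\Ds_{\beta,\mS}\varphi = \varphi$ we deduce $S\varphi = \lambda^{\beta}\varphi$, and therefore
\[
\Ds_{\alpha,\mS}\varphi = \lambda^{-\alpha} S\varphi = \lambda^{\beta-\alpha}\varphi.
\]
Thus $\varphi$ is an eigenfunction of the linear operator $\Ds_{\alpha,\mS}$ with eigenvalue $\lambda^{\beta-\alpha}$, and iterating gives $(\Ds_{\alpha,\mS})^n \varphi = \lambda^{n(\beta-\alpha)}\varphi$ for every $n$.

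Finally, since $0 < \lambda < 1$ and $\beta - \alpha > 0$, the factor $\lambda^{n(\beta-\alpha)}$ decreases to $0$; for any prescribed (sufficiently small) $\eps > 0$ I would choose $n$ large enough that $\lambda^{n(\beta-\alpha)} < 1 - \eps$. As $\varphi > 0$ everywhere, this yields the required strict inequality $(\Ds_{\alpha,\mS})^n \varphi = \lambda^{n(\beta-\alpha)}\varphi \prec (1-\eps)\varphi$ on $\bbR$, and in particular on $J$.

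I do not expect a genuine obstacle here: the argument is a short scaling computation once the right objects are in place. The only point requiring care is the justification that $\psi_{\beta,\mu}$ is an honest finite, continuous, strictly positive fixed point, which is exactly what Lemma~\ref{lem:psi} and Corollary~\ref{cor:fixedpoint} supply. The reason for passing to the larger exponent $\beta$, rather than working directly at $\alpha$, is that $\psi_{\alpha,\mu}$ is merely a fixed point of $\Ds_{\alpha,\mS}$ and so produces no decrease; the spectral gap $\lambda^{\beta-\alpha} < 1$ coming from the strict inequality $\alpha < \beta < D_2(\mu)$ is precisely what delivers the strict contraction.
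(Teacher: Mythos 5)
Your proof is correct, and it takes a genuinely different route from the paper's. The paper works at the exponent $\alpha$ itself, where $\psi_{\alpha,\mu}$ is a fixed point of $\Ds_{\alpha,\mS}$ (eigenvalue $1$, so no contraction is available): it truncates $\psi_{\alpha,\mu}$ to a function supported on an enlarged admissible interval $\widetilde J$, gets $\Ds_{\alpha,\mS}\varphi\preccurlyeq\varphi$ from monotonicity, and produces the strict decrease dynamically --- for $n$ large, from every $x\in J$ at least one $n$-fold inverse branch exits $J$, where the truncation makes the corresponding term strictly smaller (cf.\ Lemma~\ref{lem:return}); compactness of $J$ then upgrades the pointwise strict inequality to the uniform factor $1-\eps$. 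You instead pass to an auxiliary exponent $\beta\in(\alpha,D_2(\mu))$ and observe that all the operators $\Ds_{\gamma,\mS}$ share the same transfer part up to the scalar $\lambda^{-\gamma}$, so the fixed point $\psi_{\beta,\mu}$ at level $\beta$ is an eigenfunction of $\Ds_{\alpha,\mS}$ with eigenvalue $\lambda^{\beta-\alpha}<1$: the contraction is then immediate, holds on all of $\mathbb R$, works for every $\eps\in(0,1)$, comes with an explicit geometric rate, and your $\varphi$ even satisfies $\Ds_{\alpha,\mS}\varphi\preccurlyeq\varphi$, the auxiliary property that Proposition~\ref{prop:D21} later extracts from this lemma. (Two minor points: strict positivity of $\psi_{\beta,\mu}$ is not actually part of Lemma~\ref{lem:psi}, but it is immediate since $\mu$ is compactly supported; and finiteness of $I(\mu,\beta)$ for $\beta<D_2(\mu)$ rests on monotonicity of the energy in the exponent, which the paper also uses tacitly for $\alpha$.) What the paper's heavier construction buys in exchange is a \emph{compactly supported} test function whose image under $\Ds_{\alpha,\mS}$ is supported back inside $J$; that is what makes the two-sided approximation $\sup_J\bigl|(\mathcal D_{\alpha,\Lambda,\mathcal J})^{n'}\varphi-(\Ds_{\alpha,\mS})^{n'}\varphi\bigr|\to 0$ in the proof of Proposition~\ref{prop:D21} valid as written, whereas your globally positive $\varphi=\psi_{\beta,\mu}$ would lose its tail outside $J$ under the finite-rank discretized operator, so that step would need to be replaced by a one-sided (and still sufficient) estimate. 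As a proof of the lemma as stated, your argument is complete and in fact slightly stronger than what is claimed.
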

\begin{proof}
    Since $\alpha<D_2(\mu)$, then $I(\alpha,\mu)$ is finite and by
    Lemma~\ref{lem:psi} the function 
    $$
    \psi_{\alpha,\mu}(r) = \int\int |x-y-r|^{-\alpha} d\mu(x)d\mu(y) 
    $$
    is finite for all~$r \in \mathbb R$. Moreover, by
    Corollary~\ref{cor:fixedpoint} it is the fixed point of
    the symmetric diffusion operator~$\Ds_{\alpha,\mS}$.

    Let $\mS(\lambda,\bar d, \bar q)$ be the complentary iterated function
    scheme of $N$ similarities. Let $m$ be as defined in~\eqref{eq:suppmu-mu}, so that $\supp
    \mu*(-\mu) \subseteq [-m,m]$. Let us choose an admissible interval $J:=[-a,a]$
    and consider the intersection of its preimages under $g_j = \lambda x +d_j$ 
    $$
    \tilde J: = \bigcap_{j=1}^N g_j^{-1}([-a,a]) = [- \tilde a, \tilde a]
    \supsetneq [-a,a] \supsetneq [-m,m].  
    $$
   Then for any $x\notin J$ and any $j=1,\dots,N$ one has
   $f_j^{-1}(x)\notin \tilde J$. 
    
    We define a continuous function~$\varphi$ by 
    \begin{equation}
        \label{eq:psi-cut}
        \varphi(r) = 
        \begin{cases}
            \psi_{\alpha,\mu}(r), & \mbox{ if } |r| \le a , \\
            \psi_{\alpha,\mu}(r) \cdot \frac{\tilde a-r}{\tilde a-a}, & \mbox{ if }
            a <r< \tilde a, \\
            \psi_{\alpha,\mu}(r) \cdot \frac{\tilde a+r}{\tilde a- a}, & \mbox{ if }
            -\tilde a <r< - a, \\
            0, & \mbox{ otherwise.}
        \end{cases}
    \end{equation}
    \begin{figure} 
        \centering
        \includegraphics[height=55mm,width=120mm]{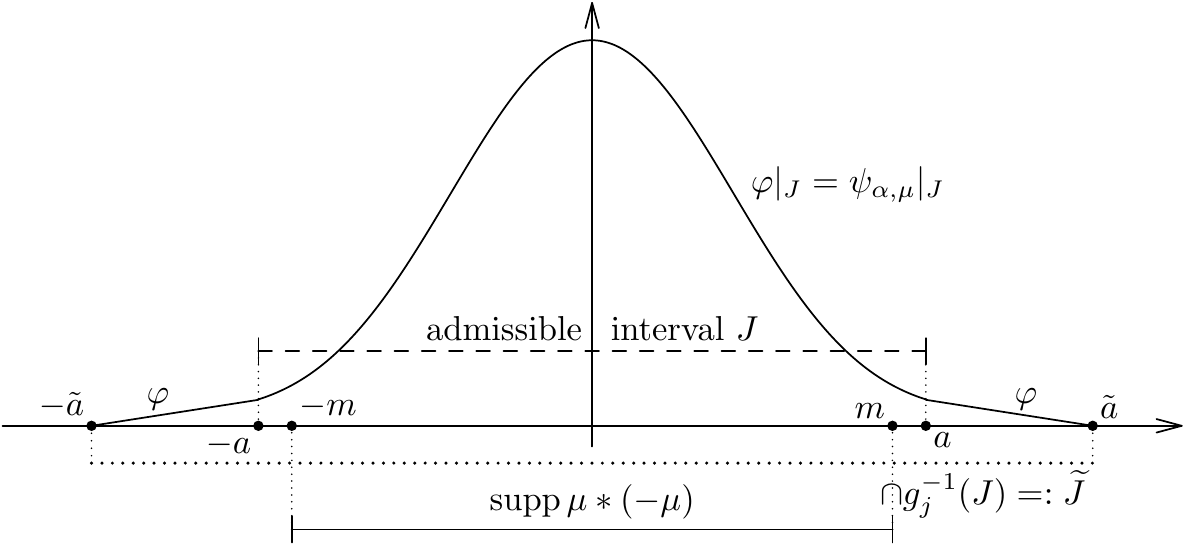}
        \caption{The construction of function $\varphi$ in Lemma~\ref{lem:find-D2}.
        It agrees with $\psi_{\alpha,\mu}$ on the admissible interval $J$, is
        linear on $\widetilde J \setminus J$ and vanishes outside of $\widetilde
        J$.} 
        \label{fig:funphi}
    \end{figure}

    It is easy to see that $\varphi(r) \le
    \psi_{\alpha,\mu}(r)$ for all $r \in \mathbb R$.
    Taking into account monotonicity of the diffusion
    operator~\eqref{eq:d2-back} we obtain for any $r\in J$
    \begin{equation}\label{eq:Ds-phi}
    [\Ds_{\alpha,\mS} \varphi](r) \le [\Ds_{\alpha,\mS} \psi_{\alpha,\mu}](r) = \psi_{\alpha,\mu}(r) =\varphi(r). 
    \end{equation}
     On the other hand, for any $r\in (-\tilde a, \tilde a) \setminus J$ one has
     $[\Ds_{\alpha,\mS} \varphi](r) =0<\varphi(r)$, where the equality
     is due to Lemma~\ref{lem:return} and formula~\eqref{eq:d2-expect}
     for~$\Ds_{\alpha,\mS}$.  Together these
     two observations give $\Ds_{\alpha,\mS} \varphi \preccurlyeq
     \varphi$.
    
    We shall now show that for sufficiently large~$n$ we have $
    [(\Ds_{\alpha,\mS})^n \varphi](r)<\varphi(r) $ for all $r\in J$. 
    Indeed, let $d_k = \max|c_i - c_j|$. Then for any $x \in [0,a]$ we have that 
    $$
    g_k^{-n} (x) \ge d_k \sum_{j=1}^n
    \lambda^{-k} \ge d_k \lambda^{-n} \to \infty \mbox{ as } n \to \infty,
    $$
    and the case $x \in [-a,0]$ is similar. 
    
    Therefore we may choose~$n$ such that for any $x \in J$ there exists
    a sequence $\underline j_n$ such that $g_{\underline j_n}^{-1}(x) \not\in J$. 
    We may write for any $r\in J$
    \begin{equation}
    \begin{split}
        [(\Ds_{\alpha,\mS})^n\varphi](r) &= \sum_{\underline j_n} q_{j_1} \ldots
        q_{j_n} \varphi\left(g_{\underline j_n}^{-1} (r)\right)
        \\ &=
        \sum_{\underline j_n \, : \,  g_{\underline j_n}^{-1}(r) \in J} q_{j_1} \ldots
        q_{j_n} \psi_{\alpha,\mu} \left(g_{\underline j_n}^{-1} (r)\right) +
        \sum_{\underline j_n \, : \, g_{\underline j_n}^{-1} \not\in J} q_{j_1} \ldots
        q_{j_n} \varphi\left(g_{\underline j_n}^{-1} (r)\right) 
        \\ &<
        \sum_{\underline j_n \, : \,  g_{\underline j_n}^{-1}(r) \in J} q_{j_1} \ldots
        q_{j_n} \psi_{\alpha,\mu} \left(g_{\underline j_n}^{-1} (r)\right) +
        \sum_{\underline j_n \, : \, g_{\underline j_n}^{-1} \not\in J} q_{j_1} \ldots
        q_{j_n} \psi_{\alpha,\mu} \left(g_{\underline j_n}^{-1} (r)\right) 
        \\ &= \psi_{\alpha,\mu}(r) = \varphi(r), 
    \end{split}
    \label{eq:Ds-itn}
    \end{equation}
   where the equality in the second line comes from the fact that
   $\varphi|_J=\psi_{\alpha,\mu}|_J$, while the inequality in the third is due
   to the strict inequality between the corresponding terms of the sums over
    the set $\{\underline j_n : g_{\underline j_n}^{-1} \not\in J\}$. 
Note that $(\Ds_{\alpha,\mS})^n\varphi$ is a continuous function, and a strict inequality 
\[
[(\Ds_{\alpha,\mS})^n\varphi](r) < \varphi(r)
\]
for all~$r \in J$ implies, taking into account compactness of $J$, that for some
$\varepsilon > 0$ one has
   \[
   (\Ds_{\alpha,\mS})^n\varphi \prec (1-\varepsilon) \varphi.
   \] 
\end{proof}

We now proceed to prove Proposition~\ref{prop:D21}. 

\medskip

\begin{proof}[of Proposition~\ref{prop:D21}] 
Let an admissible interval $J$ be fixed. By Lemma~\ref{lem:find-D2} we know that
there exist $n$, $\eps$, $\theta>0$ and a function $\varphi(r)>\theta$ such
that $\Ds_{\alpha,\mS} \varphi \preccurlyeq \varphi$ and for all $r\in J$ 
   \[
   [(\Ds_{\alpha,\mS})^n\varphi](r) < (1-\varepsilon) \varphi(r).
   \] 
Note that for $c=\frac{1}{\theta}$ we have $\mathds{1}_J\preccurlyeq c \varphi$, and
in particular for every~$m$ we get
\[
(\Ds_{\alpha,\mS})^{mn}(\mathds{1}_J) \preccurlyeq (\Ds_{\alpha,\mS})^{mn}(c \varphi) = c (\Ds_{\alpha,\mS})^{mn}(\varphi) \preccurlyeq c (1-\eps)^m \varphi.
\]
Then for~$m$ sufficiently large so that $c(1-\eps)^m \cdot \max_J \varphi<\frac
12$, we obtain a strict inequality
\[
(\Ds_{\alpha,\mS})^{mn}(\mathds{1}_J)\prec \frac{1}{2} \mathds{1}_J.
\]
Let us fix $n':=nm$. Note that~$\varphi$ and its images under $\Ds_{\alpha,\mS}$
are continuous. The finite rank operator $\mathcal D_{\alpha,\Lambda, \mathcal
J}^{n^\prime}$ depends continuously on the partition $\mathcal J$ and $\Lambda$, therefore
\[
\sup_J  \left| (\mathcal D_{\alpha,\Lambda, \mathcal J})^{n'} \varphi - (\Ds_{\alpha,\mS})^{n'}  \varphi \right|
 \to 0 \mbox{ as } \varepsilon \to 0 \mbox{ and } M \to \infty. 
\]
In particular, for all sufficiently small $\eps$ and sufficiently large $M$ one
has for all $x \in J$
\[
(\mathcal D_{\alpha,\Lambda, \mathcal J})^{n'}(c \varphi)(x) < \frac{1}{2},
\]
since the operator $\mathcal D_{\alpha,\Lambda, \mathcal J}$ is monotone, the
latter implies
\[
(\mathcal D_{\alpha,\Lambda, \mathcal J})^{n'}(\mathds{1}_J) \prec \frac{1}{2}\mathds{1}_J.
\]
Now, let us denote $A=D_{\alpha,\Lambda, \mathcal J}$; then for any nonnegative
$\psi$ we get $\widehat{A}_0 \psi \preccurlyeq A \psi$, and hence 
\[
\widehat{A}_0^{n'} (c\varphi) \preccurlyeq A^{n'} (c\varphi) \prec \frac{1}{2}\mathds{1}_J.
\]
On the other hand, $(\hatop A)^{n'} (c\varphi)$ converges to $\widehat{A}_0^{n'} (c\varphi)$ uniformly as $\vartheta\to 0$. 
Hence for all sufficiently small $\vartheta$ we get 
\[
(\hatop A)^{n'}(c \varphi) < \frac{1}{2}
\]
everywhere on $J$. Finally we conclude 
\[
(\hatop A)^{n'}(\mathds{1}_J) \prec \frac{1}{2}\mathds{1}_J.
\]

 \end{proof}

 \subsubsection{Proof of Theorem~\ref{t:finding-D2}}
 Let $\mS(\lambda)$ be an interated function scheme of similarities and let $\mu$ be its
 unique stationary measure. Assume that $\alpha < D_2(\mu)$. Then by
 Proposition~\ref{prop:D21} there exist an interval $\Lambda\ni\lambda$, an
 admissible interval $J_\Lambda$, 
 and its partition~$\mathcal J$ such that for the finite rank diffusion operator
 $A = \mathcal D_{\alpha,\Lambda,\mathcal J}$ we have that $\varphi:=\hatop{A}^n \mathds{1}_J \prec \mathds{1}_J$.  
 Then by Proposition~\ref{prop:auxop} we have  $\mathcal D_{\alpha,\Lambda,\mathcal J} \varphi \prec \varphi$
 in other words, that the function $\varphi$ satisfies the hypothesis of
 Theorem~\ref{thm:5star}.

\section[Diffusion operator $\Da_{\alpha,\lambda}$]{Diffusion operator $\Da_{\alpha,\lambda}$ and regularity of the measure}

In this section we consider general iterated function schemes of
orientation-preserving contracting $C^{1+\varepsilon}$ diffeomorphims and show
that the diffusion operator approach can be used to get a lower bound on the
regularity exponent of the stationary measure. 

We briefly recall the setting.
Let $J \subset \mathbb R$ be a compact interval. Consider  an interated function
scheme~$\mathcal T(\bar f, \bar p, J)$ consisting
of~$n$ uniformly contracting diffeomorphisms $f_j \colon \mathbb R \to \mathbb R$, $f_j \in
C^{1+\varepsilon}(\mathbb R)$ which preserve the interval $J$: $f_j (J) \subset
J$ for $j = 1, \ldots, n$ and probability vector $\bar p = (p_1, \ldots, p_n)$. 
Let $\mu$ be the stationary measure so that $
\sum_{j=1}^n p_j {f_j}_* \mu = \mu $; evidently, $\supp \mu \subset J$. 

The asymmetric diffusion operator is defined by~\eqref{eq:dif-oneway}. 
\begin{equation*}
\Da_{\alpha,\mathcal T}[\psi](x) := \sum_{j=1}^k p_j \cdot |(f_{j}^{-1})'(x)|^{\alpha}  \cdot \psi(f_j^{-1}(x)).
\end{equation*}

\begin{example}[Bernoulli convolution revisited] 
In the special case of Bernoulli convolution scheme~$\mS$ as defined in
Example~\ref{ex:BCsystem} and $\alpha=1$ the operator
$\Da_{1,\mS} \colon L^1(\mathbb R) \to L^1(\mathbb R)$ has a fixed point
$\Da_{1,\mS} h = h$ precisely when $\mu$ has an $L^1$ density, i.e., $\frac{d\mu}{dx} = h \in
L^1(\mathbb R)$. 
\end{example}
 
We will need the following technical fact for the proof of
Theorem~\ref{t:certificate-D1}.

\begin{lemma}
    \label{lem:dist}
    Let $\mathcal T(\bar f, \bar p)$ be an iterated function scheme of uniformly
    contracting
    $C^{1+\varepsilon}$-diffeomorphims which preserve a compact
    interval~$J$. Then the distortion is uniformly bounded. In other words,
    there exist two constants $c_1$, $c_2$ such that for any sequence
    $\underline j_n$ we have for the distortion of the composition $f_{\underline j_n} =
    f_{j_n} \circ \ldots \circ f_{j_1}$ that for all $ x,y \in J$
    $$
    e^{c_1} < \frac{f_{\underline j_n}^\prime
    (x)}{f_{\underline j_n}^\prime (y)}
    < e^{c_2} 
    $$
\end{lemma}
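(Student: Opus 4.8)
The plan is to pass to logarithms and exploit the chain rule together with the $C^{1+\varepsilon}$ regularity. Writing $x_0 = x$ and $x_k = f_{j_k}(x_{k-1})$ for $1 \le k \le n$ (so that $x_n = f_{\underline j_n}(x)$), and similarly $y_0 = y$, $y_k = f_{j_k}(y_{k-1})$, the chain rule gives $f'_{\underline j_n}(x) = \prod_{k=1}^n f'_{j_k}(x_{k-1})$, and hence
$$
\log \frac{f'_{\underline j_n}(x)}{f'_{\underline j_n}(y)} = \sum_{k=1}^n \left( \log f'_{j_k}(x_{k-1}) - \log f'_{j_k}(y_{k-1}) \right).
$$
It therefore suffices to bound this sum uniformly in $n$ and in the sequence $\underline j_n$.

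Two ingredients feed into this. First, since the $f_j$ are orientation-preserving diffeomorphisms and there are only finitely many of them, each $f'_j$ is continuous and strictly positive on the compact interval $J$, hence bounded away from $0$ and from $\infty$ uniformly in $j$; composing the $\varepsilon$-Hölder function $f'_j$ with $\log$ (which is Lipschitz on a compact subinterval of $(0,\infty)$) shows that there is a single constant $H$ with $|\log f'_j(u) - \log f'_j(v)| \le H |u-v|^\varepsilon$ for all $u,v \in J$ and all $j$. Second, uniform contraction provides $\rho \in (0,1)$ with $|f'_j| \le \rho$ on $J$ for every $j$, so that each intermediate pair satisfies $|x_{k-1} - y_{k-1}| \le \rho^{k-1} |x - y| \le \rho^{k-1} \diam(J)$.

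Combining these, I would bound each summand by $H (\rho^{k-1} \diam(J))^\varepsilon$ and sum the resulting geometric series, whose ratio $\rho^\varepsilon$ is strictly less than $1$, to obtain
$$
\left| \log \frac{f'_{\underline j_n}(x)}{f'_{\underline j_n}(y)} \right| \le H (\diam J)^\varepsilon \sum_{k=1}^n \rho^{\varepsilon(k-1)} \le \frac{H (\diam J)^\varepsilon}{1 - \rho^\varepsilon} =: C,
$$
a bound independent of $n$, of $\underline j_n$, and of $x, y \in J$. Taking $c_1 = -C - 1$ and $c_2 = C + 1$ (or any constants with $c_1 < -C \le C < c_2$) then yields the asserted strict two-sided inequality after exponentiating.

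The computation itself is routine; the only point requiring care is the first ingredient, namely establishing a \emph{uniform} Hölder bound on $\log f'_j$. This is where the hypotheses are used essentially: $C^{1+\varepsilon}$ regularity gives Hölder continuity of the derivative, the diffeomorphism property guarantees $f'_j$ never vanishes so that the logarithm is well defined and locally Lipschitz, and finiteness of the family together with compactness of $J$ upgrades all of these to constants independent of $j$. Once that is in place, uniform contraction does the rest, forcing the orbit distances $|x_{k-1}-y_{k-1}|$, and hence the telescoped errors, to decay geometrically fast enough to be summable.
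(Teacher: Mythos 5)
Your proof is correct and takes essentially the same route as the paper's: the paper packages the per-step estimate as subadditivity of the distortion functional $\varkappa(f,J)=\max_J \log f' - \min_J \log f'$ applied to the geometrically shrinking image intervals $f_{j_{k-1}}\circ\cdots\circ f_{j_1}(J)$, which is exactly your chain-rule telescoping combined with a uniform H\"older bound on $\log f_j'$ and geometric decay from uniform contraction. Both arguments conclude by summing the same geometric series $\sum_k \rho^{\varepsilon(k-1)}$, so the proposal is a pointwise reformulation of the paper's proof rather than a genuinely different one.
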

\begin{proof}
The argument generalises the classical argument for a single function, which can be found, in
particular in~\cite[\S 3.2]{MS93}. More precisely, we define the distortion of~$f$
on the interval $J$ by 
$$
\varkappa(f,J) := \max_J \log f^\prime - \min_J \log f^\prime. 
$$
It is easy to see that it is subadditive with respect to composition, in
particular, for any $f$, $g$ we have 
$$
\varkappa(f\circ g, J) \le \varkappa (g,J) + \varkappa(f, g(J)).
$$
Since by assumption $f_j$ are uniformly contracting $C^{1+\varepsilon}$
diffeomorphisms, there exist constants $C = C(\mathcal T)$ such that
$\varkappa(f_j,J) \le C \cdot |J|^\varepsilon$ and $\lambda<1$ such that
$f_j^\prime(x) < \lambda$ for all $j = 1, \ldots n$ and for any $x \in J$. 
Therefore 
\begin{multline*}
\varkappa(f_{\underline j_n}, J) \le \sum_{k=1}^n \varkappa(f_{j_k}, f_{j_{k-1}}\circ
\ldots \circ f_{j_1}(J)) \le C \sum_{k=1}^n |f_{j_{k-1}}\circ
\ldots \circ f_{j_1}(J)
|^\varepsilon 
\\ \le C \sum_{k=0}^{n-1} \lambda^{k\varepsilon} |J|^{\eps}
\le \frac{ C
|J|^\varepsilon}{1-\lambda^\varepsilon},
\end{multline*}
and the result follows.
\end{proof}

\subsection{Proof of Theorem~\ref{t:certificate-D1} }
For the convenience of the reader, we recall the statement. 
\paragraph{Theorem~\ref{t:certificate-D1}.}
\emph{Assume that for some $\alpha>0$ there exists a function $\psi \in \mathcal
F_{B_r(J)} $   
such that for any $x \in B_r(J)$ we have  that
\begin{equation*}
 [\Da_{\alpha,\mathcal T}\psi](x) < \psi(x).
\end{equation*}
Then the measure~$\mu$ is $\alpha$--regular. }
\begin{proof}
  Let $I = B_\delta(x) \subset J$ be an interval of length~$|I|=2\delta$. We
  shall show that there exists $c \in \mathbb R$ such that
  \begin{equation}
      \label{eq:d1-proof1}
       \mu(I) \le c \cdot \delta^\alpha. 
  \end{equation}
    
  We will use a random processes approach as in the previous section. 
  Let us first consider a random process
  \begin{equation}
    \label{eq:past}
    F_{\underline \omega_n}(x) = f_{\omega_1}\circ \ldots \circ f_{\omega_n}(x),
  \end{equation}
  where $\omega_j$ are the i.i.d. distributed with $\mathbb P(\omega_j = j) = p_j$. 
  Using the induced action on the stationary measure~$\mu$ we define another random
  process by
  \begin{equation}
      \label{eq:xi}
      \xi_n(\omega) = \mu\left(F_{\underline \omega_n}^{-1}(I)\right).
  \end{equation}
  It follows from the invariance of~$\mu$ that the process~$\xi$ is a
  martingale. Indeed, 
  \begin{equation*}
      \mathbb E \xi_n (\omega) = \sum_{\underline j_n} p_{j_1} \ldots p_{j_n}
      \mu(f_{\underline j_n}^{-1}(I)) = \mu(I) = \xi_0. 
  \end{equation*}
  By assumption the maps are uniformly contracting, therefore their inverses are
  uniformly expanding and by compactness the derivatives are bounded on
  $B_r(J)$. Let us denote by $\beta_{min} $ and $\beta_{max} $ the lower and the upper
  bound, respectively:
  $$
  1 < \beta_{min}:= \inf_{\stackrel{x \in B_r(J)}{1 \le j \le n}} (f_j^{-1})^\prime(x )
   < \sup_{\stackrel{x \in B_r(J)}{1 \le j \le n}} (f_j^{-1})^\prime(x) =:
   \beta_{max}
  $$ 
  and consider the stopping time
  \begin{equation}
      \label{eq:stopt}
      T(\omega) = \min\left\{ n \mid F_{\underline \omega_n}^{-1}(x) \not\in B_r(J) \mbox{ or }
      \left|F_{\underline \omega_n}^{-1}(I)\right| > \frac{r}{\beta_{max}} \right\}. 
  \end{equation}
  Since the diffeomorphisms~$f_j$ are contracting, we have for any $\omega$ 
  $$
  T(\omega) < \frac{\log(2\beta_{max} \varepsilon )}{\log \beta_{min}} + 1.
  $$
  Consider the backward random process associated with the inverses of
  diffeomorphisms~$f_j$
  \begin{equation}
      \label{eq:eta}
      \eta_n (\omega) = \left( ( F_{\underline \omega_n}^{-1})^\prime (x)\right)^\alpha \cdot \psi
      (F_{\underline \omega_n}^{-1}(x)) .
  \end{equation}
  We claim that $\eta$ is a supermartingale. Indeed, by assumption 
  $$
  [\Da_{\alpha,\mathcal T} \psi] (F_{\underline \omega_n}^{-1}(x) ) \le \psi (F_{\underline \omega_n}^{-1}(x) )
  $$
  therefore 
  \begin{align*}
  \mathbb E (\eta_{n+1} & \mid \omega_1 \ldots \omega_n) = \\ & =  \left( F_{\underline \omega_n} ^{-1} (x)
  \right) ^\alpha \cdot\sum_{j=1}^n  p_{j_{n+1}} \left( (f^{-1}_{j_{n+1}})^\prime  
   \left(  F_{\underline \omega_n} ^{-1}  (x)\right) \right)^\alpha 
  \cdot \psi \left(f_{j_{n+1}}^{-1}( F_{\underline \omega_n}^{-1}(x) ) \right) \\ 
  &=  \left( F_{\underline \omega_n} ^{-1} (x) \right)^\alpha \cdot
  [\Da_{\alpha,\mathcal T} \psi]
  (F_{\underline \omega_n}^{-1}(x) ) \\
  &\le \left( F_{\underline \omega_n} ^{-1} (x) \right)^\alpha \psi(F_{\underline \omega_n}^{-1}(x) ) =
  \eta_n .
  \end{align*} 
  In particular, since $T(\omega)$ is finite, 
  \begin{equation}
      \label{eq:expeta}
  \mathbb E \eta_{T(\omega)} \le \eta_0 = \psi (x).
  \end{equation} 
 
  We next want to consider the expectations $\mathbb E \xi_{T(\omega)}$ and
  $\mathbb E \eta_{T(\omega)}$. 
  By definition~\eqref{eq:stopt}
of $T(\omega)$  we have that at least one of the following events takes place: 
  \begin{align*}
      A :& = \left[F_{\underline \omega_T}^{-1}(x) \not\in B_r(J)\right] \\ 
      B :& = \left[ F_{\underline \omega_T}^{-1}(x) \in B_r(J)  \mbox{ and }
      \left|F_{\underline \omega_T}^{-1}(I)\right| > \frac{r}{\beta_{max}}
      \right] .
  \end{align*}
  We claim that if~$B$ doesn't occur, then~$\eta_{T(\omega)}=0$ and 
  $ \xi_{T(\omega)}=0$. Indeed, the first follows from~\eqref{eq:eta} and the fact that $\supp \psi
  \subset B_r(J)$. For the second, note that, by definition~\eqref{eq:stopt} of $T(\omega)$, in this case we have
  $F_{\underline \omega_{T-1}}^{-1}(x) \in J$ and $\left|F_{\underline
  \omega_{T-1}}^{-1}(I)\right| <
  \frac{r}{\beta_{max}} $. Therefore 
  $$
  \left| F_{\underline \omega_T}^{-1}(I)\right| \le \beta_{max} \cdot \frac{r}{\beta_{max}}
  = r, 
  $$
  hence $F_{\underline \omega_T}^{-1}(I) \cap J = \varnothing$, and 
  $\xi_{T(\omega)} = \mu( F_{\underline \omega_T}^{-1}(I))=0$.   

  Now assume that~$B$ occurs. Then $\left|
  F_{\underline \omega_T}^{-1}(I)\right| \le r$ and therefore $F_{\underline
  \omega_T}^{-1}(I)\subset B_{2r}(J)$. By Lemma~\ref{lem:dist} applied to the
  interval $B_{2r}(J)$, we see that there exist~$c_1$ and~$c_2$ such that for
  any $y\in I$ 
  $$
      e^{c_1} \le
      \frac{(F_{\underline \omega_T}^{-1})^\prime(x)}{(F_{\underline \omega_T}^{-1})^\prime(y)} \le
      e^{c_2}. 
  $$
 Indeed, if we denote $\bar x:=(F_{\underline \omega_T}^{-1})(x), \bar y:=(F_{\underline \omega_T}^{-1})(y)$; then 
  $\bar x,\bar y\in B_{2r}(J)$ and 
  \[
  \frac{(F_{\underline \omega_T}^{-1})^\prime(x)}{(F_{\underline \omega_T}^{-1})^\prime(y)} = \frac{(F_{\underline \omega_T})^\prime(\bar y)}{(F_{\underline \omega_T})^\prime(\bar x)}.
  \]
  In particular, 
  \begin{equation} 
      \label{eq:difbound}
    (F_{\underline \omega_T}^{-1})^\prime(x) \ge e^{c_1} \frac{|F_{\underline \omega_T}^{-1} (I)|}{|I|}
    \ge \frac{e^{c_1} r}{\delta \cdot \beta_{max}}.
  \end{equation}  
  We have an upper bound for $\mu(I) = \mu(B_\delta(x))$: 
  \begin{equation}
    \label{eq:01}
    \mu(I) = \mathbb E \xi_{T(\omega)} = 
    \mathbb E \mu(F_{\underline \omega_T}^{-1}(I)) =
    \mathbb E \left(\mu(F_{\underline \omega_T}^{-1} (I)) \cdot
    \mathds{1\!}_B \right) \le \mathbb P(B), 
  \end{equation}
  since $\mu(F_{\underline \omega_T}^{-1}(I)) \le 1$ and if $B$ doesn't take place, then $\mu(F_{\underline \omega_T}^{-1} (I)) =0$. 
  
  By the hypothesis of the Theorem, there exists $c_3>0$ such that for any $x
  \in B_r(J)$ we have that $ \frac1{c_3} < \psi(x) < c_3$.
  Therefore, using~\eqref{eq:expeta} and~\eqref{eq:difbound}
  \begin{equation}
      c_3 \ge \psi(x) = \eta_0 \ge \mathbb E \eta_{T(\omega)} = \mathbb E
      \left( \left( (F_{\underline \omega_T}^{-1} )^\prime (x) \right)^\alpha\cdot \psi
      (F_{\underline \omega_T}^{-1}(x) ) \right) \ge
      \frac1{c_3} \cdot \Bigl(\frac{e^{c_1} r}{\delta \cdot \beta_{max}}\Bigr)^\alpha \cdot
    \mathbb P(B). 
  \end{equation}
  In particular, we obtain an upper bound $\mathbb P(B) \le c_4 \cdot
  \delta^\alpha$ and the desired estimate~\eqref{eq:d1-proof1} follows from~\eqref{eq:01}.
\end{proof}

\subsection{Proof of Theorem~\ref{t:finding-D1} }
\newcommand{\ap}{\bar{\alpha}}
\newcommand{\Prob}{\mathbb P}

\newcommand{\tx}{\widetilde{x}}
\newcommand{\ty}{\widetilde{y}}

Theorem~\ref{t:finding-D1} follows immediately from Proposition~\ref{prop:auxop}
and Proposition~\ref{prop:D11} below.

We begin with the following lemma, which is analogous to
Lemma~\ref{lem:find-D2}. However, in the case of general iterated function
scheme~$\mathcal T(\bar f, \bar p, J)$ we don't know the
eigenfunction of $\Da_{\alpha,\mathcal T}$. 
 
\begin{lemma}
    \label{lem:find-D1}
    Let~$\mu$ be the stationary measure of $\mathcal T(\bar f,\bar p, J)$.
    Assume that $\alpha < D_1(\mu)$. Then there exists $r>0$ such that
    $$
    \lim_{n\to \infty} \bigl\|(\Da_{\alpha,\mathcal T})^n  \mathds{1\!}_{B_r(J)}
          \bigr\|_\infty = 0.
          $$
  \end{lemma}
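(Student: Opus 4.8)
The plan is to fix $\alpha'$ with $\alpha<\alpha'<D_1(\mu)$, so that by~\eqref{eq:D1.1} the measure $\mu$ is $\alpha'$-regular: $\mu(B_\rho(y))\le C\rho^{\alpha'}$ for all $y,\rho$. I would work throughout with the probabilistic representation of the operator. Writing $F_{\underline\omega_n}=f_{\omega_1}\circ\cdots\circ f_{\omega_n}$ for an i.i.d. sequence with $\mathbb P(\omega_k=j)=p_j$, iterating~\eqref{eq:dif-oneway} and using the chain rule gives
\[
[(\Da_{\alpha,\mathcal T})^n\mathds{1}_{B_r(J)}](x)=\mathbb E\Bigl[\bigl((F_{\underline\omega_n}^{-1})'(x)\bigr)^{\alpha}\,\mathds{1}_{B_r(J)}\bigl(F_{\underline\omega_n}^{-1}(x)\bigr)\Bigr]=\sum_{\underline j_n}p_{\underline j_n}\bigl((f_{\underline j_n}^{-1})'(x)\bigr)^{\alpha}\mathds{1}_{B_r(J)}\bigl(f_{\underline j_n}^{-1}(x)\bigr).
\]
First I would record two structural facts. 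Since each $f_j$ is a contraction preserving $J$, for every $r>0$ one checks $f_j(B_r(J))\subset B_r(J)$, so $B_r(J)$ is itself forward-invariant. This yields a \emph{never-return} property exactly as in Lemma~\ref{lem:return}: the backward trajectory $z_k=f_{\omega_k}^{-1}(z_{k-1})$, $z_0=x$, satisfies $z_k\in B_r(J)\Rightarrow z_{k-1}\in B_r(J)$. Consequently only the \emph{surviving} branches, those with $z_k\in B_r(J)$ for all $k\le n$, contribute to the sum, and along such a branch every intermediate point lies in the forward-invariant interval $B_r(J)$. Hence Lemma~\ref{lem:dist}, applied on $B_r(J)$ (legitimate precisely because $B_r(J)$ is invariant), bounds the distortion of $f_{\underline j_n}$ \emph{uniformly in $n$} along all surviving branches.

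The core of the argument is a multiplicative splitting of the exponent. Along a surviving branch each factor $(f_{\omega_k}^{-1})'(z_{k-1})$ is at least $\beta_{min}>1$ (the expansion constant on $B_r(J)$), so $(f_{\underline j_n}^{-1})'(x)\ge\beta_{min}^n$; since $\alpha<\alpha'$ this gives the termwise bound $\bigl((f_{\underline j_n}^{-1})'(x)\bigr)^{\alpha}\le\beta_{min}^{n(\alpha-\alpha')}\bigl((f_{\underline j_n}^{-1})'(x)\bigr)^{\alpha'}$. Summing over surviving branches,
\[
[(\Da_{\alpha,\mathcal T})^n\mathds{1}_{B_r(J)}](x)\le\beta_{min}^{n(\alpha-\alpha')}\,[(\Da_{\alpha',\mathcal T})^n\mathds{1}_{B_r(J)}](x).
\]
As $\beta_{min}>1$ and $\alpha-\alpha'<0$, the prefactor decays geometrically, so it suffices to show that $[(\Da_{\alpha',\mathcal T})^n\mathds{1}_{B_r(J)}](x)$ grows at most sub-geometrically in $n$ (a polynomial bound is more than enough). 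To bound this $\alpha'$-iterate I would use the stationarity identity $\mu=\sum_{\underline j_n}p_{\underline j_n}(f_{\underline j_n})_*\mu$ together with regularity. For a surviving branch, bounded distortion on $B_r(J)$ gives $(f_{\underline j_n}^{-1})'(x)\asymp |B_r(J)|/\ell_{\underline j_n}$, where $\ell_{\underline j_n}:=|f_{\underline j_n}(B_r(J))|$, while the one-term inequality $\mu(f_{\underline j_n}(J))\ge p_{\underline j_n}$, combined with $f_{\underline j_n}(J)\subset B_{\ell_{\underline j_n}}(x)$ and $\alpha'$-regularity, gives $p_{\underline j_n}\le C\ell_{\underline j_n}^{\alpha'}$; thus every surviving term $p_{\underline j_n}\bigl((f_{\underline j_n}^{-1})'(x)\bigr)^{\alpha'}$ is $O(1)$. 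Grouping surviving branches by their comparable scale $\ell_{\underline j_n}\approx s$, I would bound the total weight at scale $s$ by comparison with a $\mu$-ball: dropping the non-surviving branches in the stationarity identity for $B_{\kappa s}(x)$ and showing that each surviving scale-$s$ preimage $f_{\underline j_n}^{-1}(B_{\kappa s}(x))$ captures a definite fraction of $\mu$, one gets $\sum_{\ell_{\underline j_n}\approx s}p_{\underline j_n}\lesssim\mu(B_{\kappa s}(x))\le C(\kappa s)^{\alpha'}$, making each scale contribute $O(1)$. Since for fixed $n$ the scales range only over $[\lambda_{\min}^n,\lambda_{\max}^n]\,|B_r(J)|$, i.e. over $O(n)$ dyadic values, this yields $[(\Da_{\alpha',\mathcal T})^n\mathds{1}_{B_r(J)}](x)=O(n)$ uniformly in $x$, which against the geometric prefactor proves $\|(\Da_{\alpha,\mathcal T})^n\mathds{1}_{B_r(J)}\|_\infty\to0$.

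The main obstacle is the scale-by-scale weight bound $\sum_{\ell_{\underline j_n}\approx s}p_{\underline j_n}\lesssim s^{\alpha'}$. A crude bound through the overlap multiplicity of cylinders fails in exactly the regime of interest (e.g. Bernoulli convolutions with $\lambda$ close to $1$), where the cylinders overlap heavily, and it is here that the full strength of Frostman regularity must enter. The delicate point is the matching of scales: to bound $\mu(f_{\underline j_n}^{-1}(B_{\kappa s}(x)))$ from below one needs the preimage ball large enough to meet a definite portion of $\mathrm{supp}\,\mu$, yet small enough to remain in a neighbourhood $B_{2r}(J)$ on which Lemma~\ref{lem:dist} controls the distortion. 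I would resolve this by taking $r$ small and choosing the comparison constant $\kappa$ so that both requirements hold for the surviving branches (whose preimage centres $z_n$ lie in $B_r(J)$), treating separately, if needed, the branches whose centre $z_n$ is far from $J$ and whose weight $p_{\underline j_n}$ is correspondingly small.
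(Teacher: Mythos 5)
Your proposal is correct, and it rests on the same pillars as the paper's own proof: the expansion of $[(\Da_{\alpha,\mathcal T})^n\mathds{1}_{B_r(J)}](x)$ over words $\underline j_n$, the bounded-distortion Lemma~\ref{lem:dist} applied on the forward-invariant interval $B_r(J)$ to get $(F_{\underline j_n}^{-1})'(x)\lesssim |F_{\underline j_n}(B_r(J))|^{-1}$ on surviving branches, and the weight bound $\sum_{\ell_{\underline j_n}\approx s}p_{\underline j_n}\le\mu(B_{2s}(x))\le C s^{\alpha'}$ obtained by discarding terms in the stationarity identity. The difference is purely in the bookkeeping. The paper runs the dyadic-scale decomposition directly at exponent $\alpha$: with $\alpha<\bar\alpha<D_1(\mu)$, the scale-$\delta$ group contributes $\lesssim\delta^{\bar\alpha-\alpha}$, so the sum over dyadic scales is a convergent geometric series bounded by $\mathrm{const}\cdot\delta_n^{\bar\alpha-\alpha}$, where $\delta_n=\max_{\underline j_n}|F_{\underline j_n}(B_r(J))|\to0$; this requires no counting of scales and no positive lower bound on $f_j'$. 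You instead split off the geometric factor $\beta_{min}^{n(\alpha-\alpha')}$ via uniform expansion, after which the $\alpha'$-iterate must be shown to grow sub-geometrically; since at exponent $\alpha'$ each scale contributes only $O(1)$ (not summable over infinitely many scales), you are forced to count scales, which costs you the extra hypothesis $\inf_{B_r(J)}f_j'>0$ (harmless for $C^{1+\varepsilon}$ diffeomorphisms on a compact interval) to get the $O(n)$ count. Both routes close; the paper's arrangement is leaner because the exponent gap makes the scale sum itself geometric. Finally, the ``main obstacle'' you flag in your last paragraph is not actually there: if a scale-$s$ branch survives, i.e. $x\in F_{\underline j_n}(B_r(J))$ and $|F_{\underline j_n}(B_r(J))|\le 2s$, then $F_{\underline j_n}(B_r(J))\subset B_{2s}(x)$, hence $F_{\underline j_n}^{-1}(B_{2s}(x))\supset B_r(J)\supset\supp\mu$, so the pushforward $(F_{\underline j_n})_*\mu$ gives \emph{full} mass (not merely ``a definite fraction'') to $B_{2s}(x)$; no matching of scales, choice of $\kappa$, or overlap-multiplicity consideration is needed, and heavy overlapping of cylinders is irrelevant because the surviving weights are simply summed against $\mu(B_{2s}(x))$.
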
 

\begin{proof} 
    Let us introduce a shorthand notation $I := B_r(J)$ and
    \begin{equation*}
    \psi_n(x) := (\Da_{\alpha,\mathcal T})^n \mathds{1}_I.
    \end{equation*}
    Since $\Da_{\alpha,\mathcal T}$ preserves non-negative functions, it is
    sufficient to show that 
    \begin{equation}
        \label{eq:psin}
    \psi_n(x) \to 0 \mbox{ as } n \to \infty \mbox{ uniformly in } x.
    \end{equation}
 Let $x \in I$ be fixed.   We may rewrite~$\psi_n$ using the definition of the assymetric
    operator~\eqref{eq:dif-oneway} as follows
    \begin{equation}\label{eq:D-ap}
        \psi_n (x) = \sum_{\underline j_n} p_{j_1} \ldots p_{j_n} 
        ((F_{\underline j_n}^{-1})^\prime (x))^{\alpha} \mathds{1}_I
        (F_{\underline j_n}^{-1}(x)), 
    \end{equation}
    where $F_{\underline j_n}=f_{j_1}\circ \dots \circ f_{j_n}$. 

    By Lemma~\ref{lem:dist} there exists $c_1$ such that for any word
    $\underline j_n$ for any $y_1, y_2 \in I$ 
    \[
    e^{-c_1}<\frac{F_{\underline j_n}^\prime(y_1)}{F_{\underline j_n}^\prime (y_2)}<e^{c_1}
    \]
 in particular, there exists $c_2>0$ such that for any $y \in I$ we have
\begin{equation}\label{eq:F-lower}
  F_{\underline j_n}^\prime (y) \ge c_2 |F_{\underline j_n} (I)| .
\end{equation}

Let us collect nonzero terms from the right hand side of~\eqref{eq:D-ap} by
length of $F_{\underline j_n}(I)$. More precisely, given $\delta>0$ consider the set
of words 
\[
R_{\delta} := \{ \underline j_n \mid x \in F_{\underline j_n}(I), \quad \delta <
|F_{\underline j_n} (I)|\le 2\delta \}.
\]
Note that since~$\mu$ is stationary, $\mu = \sum_{\underline j_n} p_{\underline
j_n} (F_{\underline j_n})_* \mu$, and $\supp ((F_{\underline j_n})_*
\mu)\subset F_{\underline j_n} (I)$, we have for any word  $\underline j_n \in
R_{\delta}$ that $ \supp (F_{\underline j_n})_* \mu \subset B_{2\delta}(x)$.
Hence
$$
\mu(B_{2\delta}(x)) \ge \sum_{ \underline j_n \in R_\delta} p_{\underline j_n}
 ((F_{\underline j_n})_* \mu)(B_{2\delta}(x)) = \sum_{\underline j_n \in
R_\delta} p_{\underline j_n} \cdot 1 = \Prob
(R_{\delta}). 
$$
By assumption $\alpha < D_1(\mu)$. Then there exists $ \alpha < \bar \alpha < D_1(\mu) $ such that 
\begin{equation}
    \label{eq:probup}
\Prob (R_{\delta}) \le \mu(U_{2\delta}(x)) \le \const \cdot
(2\delta)^{\bar \alpha}.
\end{equation}

Note that a term of the sum~\eqref{eq:D-ap} corresponding to a given $\underline j_n$
is nonzero only if $F_{\underline j_n}^{-1}(x)\in I$ or, equivalently, if $x\in
F_{\underline j_n}(I)$. It follows from~\eqref{eq:F-lower} that for any $\underline
j_n \in R_\delta$ we have 
\begin{equation}
    \label{eq:fprimeup}
(F_{\underline j_n}^{-1})'(x) = \frac{1}{F_{\underline j_n}'(y)} \le
\frac{1}{c_2\delta}. 
\end{equation}
Thus, combining~\eqref{eq:probup} with~\eqref{eq:fprimeup} we get an upper bound for a part of the sum from~\eqref{eq:D-ap},
which corresponds to~$\underline j_n \in R_{\delta}$
\begin{equation}
    \label{eq:sumrd}
    \sum_{\underline j_n \in R_{\delta}} p_{\underline j_n} ((F_{\underline
    j_n}^{-1})'(x))^{\alpha} \mathds{1}_I(F_{\underline j_n}^{-1}(x))
\le \Prob(R_{\delta}) \cdot \frac{1}{(c_2 \delta)^{\alpha}} 
\le c_5 \cdot \delta^{\bar\alpha-\alpha}.
\end{equation}
Let us denote $\delta_n:=\max_{\underline j_n} F_{\underline j_n}(I)$.
Since by assumption $f_j$ are uniformly contracting, $\delta_n \to 0 $ as $n \to \infty$ exponentially fast. 
Then, all $R_{\delta}$ corresponding to $\delta > \delta_n$ are empty, and therefore
\[
\{ \underline j_n \mid x \in F_{\underline j_n}(I) \}= \bigcup_{k=0}^{\infty}
R_{2^{-k} \delta_n}. 
\]
Thus
\begin{align*}
\psi_n(x) &= (\Da_{\alpha,\mathcal T})^n \mathds{1}_I (x) = \sum_{\underline j_n}
p_{\underline j_n} ((F_{\underline j_n}^{-1})^\prime(x))^\alpha \mathds{1}_I
(F_{\underline j_n}^{-1}(x)) \\
 &\le \sum_{k=0}^{\infty} \sum_{\underline j_n \in R_{2^{-k}\delta_n}}
 p_{\underline j_n} ((F_{\underline j_n}^{-1})'(x))^{\alpha}
 \mathds{1}_I(F_{\underline j_n}^{-1}(x)) 
 \le \const \cdot \sum_{k=0}^\infty (2^{-k} \delta_n)^{\bar \alpha - \alpha} 
  = \const \cdot \delta_n^{\bar \alpha-\alpha}.
\end{align*} 
Since $\delta_n \to 0$ as $n\to \infty$ and $\bar \alpha > \alpha$ the
estimate~\eqref{eq:psin} follows.

\end{proof}

Discretization of the operator~$\Da_{\alpha,\mathcal T}$ can be defined similarly
to the discretization of the symmetric operator $\Ds_{\alpha, \mathcal S}$ given
by~\eqref{eq:D-Lambda}. Let $\mathcal J$ be a partition of $B_r(J)$ of $N$
intervals. We introduce a non-linear finite rank operator 
 \begin{equation}
  \label{eq:D1-Lambda}
   \Da_{\alpha, \mathcal J} \psi |_{J_k} = 
   \sum_{j=1}^n p_j  \sup_{x\in J_k} |(f_j^{-1})^\prime(x)|^\alpha \cdot \sup_{x\in J_k}  \psi \left( f_j^{-1}(x) \right),
   \quad 1 \le k \le N.
  \end{equation}
An analogue of Proposition~\ref{prop:D21} holds for $\Da_{\alpha, \mathcal J}$ with obvious modifications. 
\begin{proposition}
    \label{prop:D11}
Let $\mu$ be the stationary measure of an iterated function scheme of
diffeomorphisms $\mathcal T(\bar f ,\bar p, J)$. Assume that $\alpha < D_1(\mu)$. 
Then there exist:
\begin{enumerate}
    \item a sufficiently small interval $B_r(J)$; 
    \item a sufficiently fine partition $\mathcal J$ of $B_r(J)$;
    \item a sufficiently large $k \in \mathbb N$; and
    \item a sufficiently small $\vartheta >0$,
\end{enumerate}
so that for $A = \Da_{\alpha,\mathcal J}$ we have that $\hatop A^k
\mathds{1}_{B_r(J)} \prec \mathds{1}_{B_r(J)} $.
\end{proposition}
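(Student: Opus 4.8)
The plan is to follow the architecture of the proof of Proposition~\ref{prop:D21}, with the simplification that the eigenfunction produced by Lemma~\ref{lem:find-D2} is replaced by the direct decay statement of Lemma~\ref{lem:find-D1}. The three moving parts are the same as before: first get a strict contraction for the \emph{continuous} operator $\Da_{\alpha,\mathcal T}$, then transfer it to the finite-rank discretization $\Da_{\alpha,\mathcal J}$ by refining the partition, and finally pass from $\Da_{\alpha,\mathcal J}$ to its $\vartheta$-regularization $\hatop{\,\cdot\,}$.

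First I would invoke Lemma~\ref{lem:find-D1}: since $\alpha < D_1(\mu)$ there is $r>0$ with $\|(\Da_{\alpha,\mathcal T})^n \mathds{1}_{B_r(J)}\|_\infty \to 0$, so I can fix $k:=n_0$ large enough that $(\Da_{\alpha,\mathcal T})^{n_0}\mathds{1}_{B_r(J)} \prec \frac12 \mathds{1}_{B_r(J)}$. Here one should record that $\Da_{\alpha,\mathcal T}$ genuinely preserves $\mathcal F_{B_r(J)}$: each $f_j$ is a contraction preserving $J$, so $f_j(B_r(J)) \subset B_{\lambda r}(J) \subset B_r(J)$, whence the iterates remain supported on $B_r(J)$ and the inequality is an inequality in $\mathcal F_{B_r(J)}$. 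Since $\|(\Da_{\alpha,\mathcal T})^{n_0}\mathds{1}_{B_r(J)}\|_\infty$ is a number strictly below $\frac12$, there is a genuine uniform gap below $\frac12$, which I will need to absorb the two approximation errors introduced in the next steps.

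Next I would discretize. For any partition $\mathcal J$ of $B_r(J)$ one has the pointwise domination $\Da_{\alpha,\mathcal T}\psi \preccurlyeq \Da_{\alpha,\mathcal J}\psi$, and refining $\mathcal J$ only lowers the sups defining $\Da_{\alpha,\mathcal J}$; the content I need is that $(\Da_{\alpha,\mathcal J})^{n_0}\mathds{1}_{B_r(J)}$ converges to $(\Da_{\alpha,\mathcal T})^{n_0}\mathds{1}_{B_r(J)}$ in the uniform norm as the mesh of $\mathcal J$ tends to zero. I expect this to be the main obstacle. The weights $|(f_j^{-1})'|^{\alpha}$ are $C^{\varepsilon}$, hence uniformly continuous on the compact $B_r(J)$, so the replacement of a pointwise weight by its supremum over a partition interval costs only $O(\mathrm{mesh}^{\varepsilon})$; the remaining task is to show that the over-estimation coming from the supremum taken on the argument $\psi(f_j^{-1}(\cdot))$ propagates harmlessly through all $n_0$ compositions, which is exactly what the uniform distortion bound of Lemma~\ref{lem:dist} controls (the only delicate point being the boundary jumps, whose contribution is confined to sets of vanishing width as the mesh shrinks). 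Choosing $\mathcal J$ fine enough that the total discretization error stays below the gap from Step~1 yields a sufficiently fine partition with $(\Da_{\alpha,\mathcal J})^{n_0}\mathds{1}_{B_r(J)} \prec \frac12 \mathds{1}_{B_r(J)}$.

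Finally I would pass to $A := \Da_{\alpha,\mathcal J}$ and its regularization. The operator $A$ is monotone, and $\widehat A_0 g = \min(Ag,g) \preccurlyeq Ag$ for every $g$; a short induction on this, using monotonicity of $A$, gives $\widehat A_0^{\,n_0}\mathds{1}_{B_r(J)} \preccurlyeq A^{n_0}\mathds{1}_{B_r(J)} \prec \frac12 \mathds{1}_{B_r(J)}$. Since $A$ is a fixed finite-rank operator, $(\hatop A)^{n_0}\mathds{1}_{B_r(J)} \to \widehat A_0^{\,n_0}\mathds{1}_{B_r(J)}$ uniformly as $\vartheta \to 0$, and as the limit takes finitely many values all strictly below $\frac12$, for all sufficiently small $\vartheta$ one still has $(\hatop A)^{n_0}\mathds{1}_{B_r(J)} \prec \mathds{1}_{B_r(J)}$. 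This is exactly the asserted inequality with $k = n_0$, and it supplies the input needed, together with Proposition~\ref{prop:auxop}, for Theorem~\ref{t:finding-D1}.
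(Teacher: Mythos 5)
Your overall architecture (contract with the continuous operator, transfer to the discretization, then to the $\vartheta$-regularization) matches the paper's, but there is a genuine gap exactly at the step you yourself flag as ``the main obstacle'': the claim that $(\Da_{\alpha,\mathcal J})^{n_0}\mathds{1}_{B_r(J)}$ converges \emph{uniformly} to $(\Da_{\alpha,\mathcal T})^{n_0}\mathds{1}_{B_r(J)}$ as the mesh tends to zero. This is false, because $\mathds{1}_{B_r(J)}$ and all of its images under $\Da_{\alpha,\mathcal T}$ are discontinuous: each image is a finite sum of terms $p_{\underline j}\,((F_{\underline j}^{-1})')^{\alpha}\,\mathds{1}_{B_r(J)}(F_{\underline j}^{-1}(\cdot))$, which jump at the endpoints of the intervals $F_{\underline j}(B_r(J))$. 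On any partition interval straddling such a jump, the sup-based discretization \eqref{eq:D1-Lambda} overshoots the true value by (at least) the size of the jump, \emph{no matter how fine the mesh is}; the error is confined to a set of vanishing width, as you say, but it does not vanish in size, and the conclusion you need, $\hatop A^{k}\mathds{1}_{B_r(J)} \prec \mathds{1}_{B_r(J)}$, is a \emph{pointwise strict} inequality on all of $B_r(J)$, so a non-vanishing error on a tiny set is fatal. Worse, in the presence of overlaps (the very case of interest) the jump locations of many words can coincide, and these $O(1)$ errors are then propagated and re-amplified by the weights $|(f_j^{-1})'|^{\alpha}>1$ through the remaining iterations, so with your threshold $\tfrac12$ one cannot exclude that the discretized iterate exceeds $1$ on some partition intervals. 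Neither the uniform continuity of the weights nor Lemma~\ref{lem:dist} addresses this: distortion controls derivative ratios, not the discontinuity of the argument $\mathds{1}(f_j^{-1}(\cdot))$.

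The paper's proof avoids this by never discretizing the indicator directly. It introduces the continuous ``tent'' function $\psi$ equal to $1$ on $B_{r/2}(J)$, vanishing off $B_r(J)$, and linear in between, so that $\mathds{1}_{B_{r/2}(J)} \preccurlyeq \psi \preccurlyeq \mathds{1}_{B_r(J)}$. Monotonicity and Lemma~\ref{lem:find-D1} give $\|(\Da_{\alpha,\mathcal T})^{k}\psi\|_\infty < \tfrac12$ for large $k$; since $\psi$ and all its images under $\Da_{\alpha,\mathcal T}$ \emph{are} continuous, the uniform convergence of $(\Da_{\alpha,\mathcal J})^{k}\psi$ to $(\Da_{\alpha,\mathcal T})^{k}\psi$ as the mesh shrinks is legitimate (uniform continuity on a compact set), yielding $A^{k}\psi \prec \tfrac34\mathds{1}_{B_r(J)}$ and then, for small $\vartheta$, $\hatop A^{k}\psi \prec \tfrac56\mathds{1}_{B_r(J)}$. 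Only at the very end does one compare with an indicator, via monotonicity: $\hatop A^{k}\mathds{1}_{B_{r/2}(J)} \preccurlyeq \hatop A^{k}\psi$, which gives the strict inequality on the \emph{smaller} neighbourhood $B_{r/2}(J)$ (which is then the $B_r(J)$ of the statement). If you insert this sandwich $\mathds{1}_{B_{r/2}(J)} \preccurlyeq \psi \preccurlyeq \mathds{1}_{B_r(J)}$ into your argument and run your steps two and three on $\psi$ instead of on the indicator, your proof becomes correct and coincides with the paper's.
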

\begin{proof}
    The argument is along the same lines as the proof of Proposition~\ref{prop:D21}.
    However, in this case we do not know the eigenfunction, so we proceed as
    follows. 

    By Lemma~\ref{lem:find-D1} there exists $r>0$ such that 
   \begin{equation}
       \label{eq:propD11:0}
       \lim_{n\to \infty} \bigl\|(\Da_{\alpha,\mathcal T})^n  \mathds{1\!}_{B_r(J)}
          \bigr\|_\infty = 0.
   \end{equation}
    Let us consider a continuous function~$\psi$ defined by 
    $$
    \psi(x) = 
    \begin{cases}
        1, & \mbox{ if } x \in B_{r/2} (J); \\
        0, & \mbox{ if } x \not\in B_{r}(J); \\
        \mbox{linear}, & \mbox{ otherwise. } 
    \end{cases}
    $$
    Since the operator $\Da_{\alpha,\mathcal T}$ is monotone on~$\mathcal
    F_{B_r(J)}$, and $\psi \preccurlyeq
    \mathds{1}_{B_r(J)}$, it follows from~\eqref{eq:propD11:0} that  
    $$
    \lim_{n\to \infty} \bigl\|(\Da_{\alpha,\mathcal T})^n  \psi  \bigr\|_\infty
    = 0.
    $$
    In particular, we may choose $k$ such that $(\Da_{\alpha,\mathcal
    T})^k  \psi \prec \frac12 \mathds 1_{B_r(J)}$. 

    As in the proof of Proposition~\ref{prop:D21}, the function $\psi$ and all
    its images $(\Da_{\alpha,\mathcal T})^m  \psi$ are continuous. Hence, as the
    size~$\varepsilon$ of intervals of the partition~$\mathcal J$ decreases to
    zero, the images of $\psi$ under the iterations of the finite rank operator converge 
    \[
    \sup_{B_r(J)} \left| (\Da_{\alpha,\mathcal J})^k  \psi - (\Da_{\alpha,\mathcal T})^k  \psi \right|\to 0.
    \]
    Let us denote $A:=\Da_{\alpha,\mathcal J}$. Then for a sufficiently fine partition $\mathcal J$, we obtain
    $$
    A^k \psi = (\Da_{\alpha,\mathcal J})^k \psi \prec
    \frac34\mathds{1}_{B_r(J)}.
    $$ 
    The latter implies for ${\hatop A}$ defined by~\eqref{eq:hatop} 
    \[
    \widehat{A}_0^k \psi \preccurlyeq A^k \psi \prec \frac34\mathds{1}_{B_r(J)}.
    \]
    Finally, $(\hatop A)^k \psi$ converges uniformly to $(\widehat A_0)^k \psi$
    as $\vartheta\to 0$, hence for all $\vartheta$ sufficiently small we get
    $\widehat{A}_\vartheta^k \psi \prec \frac56\mathds{1}_{B_r(J)}$.
    On the other hand, by monotonicity ${\hatop A}^k \mathds{1}_{B_{r/2}(J)}
    \preccurlyeq \widehat{A}_\vartheta^k \psi$. Hence everywhere on $B_{r/2}(J)$ we have
    $$ 
    {\hatop A}^k \mathds{1}_{B_{r/2}(J)}(x) < \mathds{1}_{B_{r/2}(J)}(x),
    $$
    and thus with respect to partial order on $\mathcal F_{B_{r/2}(J)}$
    \[
        {\hatop A}^k \mathds{1}_{B_{r/2}(J)} \prec \mathds{1}_{B_{r/2}(J)}.
    \]
    
\end{proof}

Numerical experiments show that the Frostman dimension behaves differently to
correlation dimension and to Hausdorff dimension.  We give estimates for
multinacci numbers for comparison in Table~\ref{tab:frost}.  In particular it appears that 
in the case of Bernoulli convolutions the measure $\mu_\lambda$ corresponding to
the root of $x^4-x^3-x^2-x-1$ has smaller regularity exponent than the measure
corresponding to the root of $x^3 - x^2 -x-1$.

\begin{table}
    \centering
\begin{tabular}{|cccc|}
\hline
$n$ & $\dim_H(\mu_\lambda)$ & $\alpha_2 < D_2(\mu)$ & $\alpha_1 < D_1(\mu)$ \\
$2$ &  $ 0.995713126685$   & $  0.992395833333  $  &  $0.940215301807$ \\
$3$ &  $ 0.980409319534$   & $  0.964214555664  $  &  $0.853037293349$ \\
$4$ &  $ 0.986926474333$   & $  0.973324567994  $  &  $0.844963475586$ \\
$5$ &  $ 0.992585300274$   & $  0.983559570313  $  &  $0.854479046521$ \\
$6$ &  $ 0.996032591584$   & $  0.990673828125  $  &  $0.866685890042$ \\
$7$ &  $ 0.997937445507$   & $  0.994959490741  $  &  $0.880046136101$ \\  
$8$ &  $ 0.998944915449$   & $  0.997343750000  $  &  $0.891195693964$ \\   
$9$ &  $ 0.999465368055$   & $  0.998640046296  $  &  $0.900999615532$ \\   
\hline
\end{tabular}
\caption{Hausdorff dimension and lower bounds on correlation and Frostman
dimension for the multinacci parameter values, i.e. the largest roots of $x^n -
x^{n-1} - \ldots -x -1$. }
\label{tab:frost}
\end{table}

\begin{remark}
Nevertheless, the nonsymmetric operator can be applied to Bernoulli convolutions
mesures to show that at the other end of the range of parameters, there exists
$c > 0$ and $ \varepsilon > 0$ such that 
$\dim_H(\mu_\lambda) \geq  1-\frac{c}{\log (\lambda-\frac{1}{2})^{-1}}  $ for $\frac{1}{2}<
\lambda < \frac{1}{2}+ \varepsilon$.
Indeed, it suffices to apply $N=\lfloor\log_2 (\lambda-\frac{1}{2})^{-1}\rfloor$
iterations of $\Da_{\alpha,\mS}$ to the initial function $\mathds{1}_{[-0.1,2.1]}$. 
The scalar factor $\lambda^{N\alpha}$ will be no larger than $\frac14$, while each point
of $J=[-0.1,2.1]$ will be covered by at most two images of~$J$ under composition
of~$n$  maps of the iterated function scheme which corresponds to
Bernoulli convolutions as defined in Example~\ref{ex:BCsystem} 
$f_{0}(x)=\lambda x$, and $f_1(x)=\lambda x-1$.
\end{remark}


\appendix
\section{Numerical data}
\subsection{Salem numbers of degree up to~$10$.} 
\label{ap:salem}
    A lower bound $\alpha < D_2(\mu)$ for the correlation dimension of the Bernoulli convolution
    measure corresponding to Salem parameter values of degree up to~$10$. 
    Computed using~$300$ iterations of the diffusion operator and~$6\cdot 10^6$
    partition intervals.

    Since the coefficients form a palindromic sequence, i.e. they read the same
    backward and forward, we give only the coefficients of the first
    half of each polynomial, from the leading coefficient to the middle
    coefficient.

    \begin{longtable}{|c|c|c|c|l|}
        \hline                
        $\lambda $   &       $ \alpha $     &  degree   &  $\beta = \lambda^{-1}$   &
        coefficients \\
        \hline                
        \endhead
        \hline
        \endfoot
        \endlastfoot
        $ 0.58069183  $ & $     0.997968750   $ & $      4  $ & $  1.72208380  $ & $    1,-1,-1,-1, 1 $ \\
        $ 0.53101005  $ & $     0.990312500   $ & $      4  $ & $  1.88320350  $ & $    1,-2, 1,-2, 1 $ \\
        \hline
        \hline
        $ 0.71363917  $ & $     0.999687500   $ & $      6  $ & $  1.40126836  $ & $    1, 0,-1,-1 $ \\
        $ 0.66395080  $ & $     0.999765625   $ & $      6  $ & $  1.50613567  $ & $    1,-1, 0,-1 $ \\
        $ 0.64266105  $ & $     0.999765625   $ & $      6  $ & $  1.55603019  $ & $    1,-1,-1, 1 $ \\
        $ 0.63197255  $ & $     0.999765625   $ & $      6  $ & $  1.58234718  $ & $    1, 0,-1,-2 $ \\
        $ 0.61140647  $ & $     0.999062500   $ & $      6  $ & $  1.63557312  $ & $    1,-2, 2,-3 $ \\
        $ 0.56127948  $ & $     0.997187500   $ & $      6  $ & $  1.78164359  $ & $    1,-1,-1, 0 $ \\
        $ 0.54612702  $ & $     0.994218750   $ & $      6  $ & $  1.83107582  $ & $    1,-2, 0, 1 $ \\
        $ 0.51364860  $ & $     0.988671875   $ & $      6  $ & $  1.94685626  $ & $    1,-1,-1,-1 $ \\
        $ 0.50928087  $ & $     0.988828125   $ & $      6  $ & $  1.96355303  $ & $    1,-2,-1, 3 $ \\
        $ 0.50637559  $ & $     0.991406250   $ & $      6  $ & $  1.97481870  $ & $    1,-2, 1,-2 $ \\
        $ 0.50307044  $ & $     0.993828125   $ & $      6  $ & $  1.98779316  $ & $    1, 0,-2,-3 $ \\
        \hline
        \hline
        $ 0.78086069  $ & $     0.999609375   $ & $      8  $ & $  1.28063815  $ & $    1, 0, 0,-1,-1 $ \\
        $ 0.73529427  $ & $     0.999687500   $ & $      8  $ & $  1.35999971  $ & $    1,-1, 1,-2, 1 $ \\
        $ 0.70175179  $ & $     0.999687500   $ & $      8  $ & $  1.42500526  $ & $    1,-1, 0,-1, 1 $ \\
        $ 0.68587694  $ & $     0.999687500   $ & $      8  $ & $  1.45798747  $ & $    1, 0,-1,-1, 0 $ \\
        $ 0.65657284  $ & $     0.999765625   $ & $      8  $ & $  1.52306024  $ & $    1,-1,-1, 0, 1 $ \\
        $ 0.64633011  $ & $     0.999765625   $ & $      8  $ & $  1.54719696  $ & $    1,-2, 2,-3, 3 $ \\
        $ 0.62287838  $ & $     0.999687500   $ & $      8  $ & $  1.60544982  $ & $    1,-2, 1, 0,-1 $ \\
        $ 0.60974342  $ & $     0.999375000   $ & $      8  $ & $  1.64003408  $ & $    1, 0,-2,-1, 1 $ \\
        $ 0.60202964  $ & $     0.999062500   $ & $      8  $ & $  1.66104776  $ & $    1,-2, 1,-1, 1 $ \\
        $ 0.59350345  $ & $     0.998828125   $ & $      8  $ & $  1.68491015  $ & $    1,-1,-1, 0, 0 $ \\
        $ 0.59049048  $ & $     0.999296875   $ & $      8  $ & $  1.69350738  $ & $    1,-1, 0,-1,-1 $ \\
        $ 0.55677045  $ & $     0.997343750   $ & $      8  $ & $  1.79607232  $ & $    1,-1,-1, 0,-1 $ \\
        $ 0.55550255  $ & $     0.997968750   $ & $      8  $ & $  1.80017173  $ & $    1,-3, 4,-5, 5 $ \\
        $ 0.55255049  $ & $     0.996015625   $ & $      8  $ & $  1.80978933  $ & $    1,-1, 0,-2, 0 $ \\
        $ 0.55199367  $ & $     0.995078125   $ & $      8  $ & $  1.81161496  $ & $    1,-2, 0, 1,-1 $ \\
        $ 0.54499335  $ & $     0.993671875   $ & $      8  $ & $  1.83488477  $ & $    1, 0,-1,-2,-3 $ \\
        $ 0.54065766  $ & $     0.996718750   $ & $      8  $ & $  1.84959921  $ & $    1, 1,-1,-4,-5 $ \\
        $ 0.53646341  $ & $     0.996250000   $ & $      8  $ & $  1.86406000  $ & $    1,-1,-2, 0, 2 $ \\
        $ 0.52178497  $ & $     0.991250000   $ & $      8  $ & $  1.91649826  $ & $    1,-1,-1,-1, 0 $ \\
        $ 0.52066307  $ & $     0.992421875   $ & $      8  $ & $  1.92062783  $ & $    1,-3, 3,-2, 1 $ \\
        $ 0.51899824  $ & $     0.985468750   $ & $      8  $ & $  1.92678880  $ & $    1, 0,-2,-2,-1 $ \\
        $ 0.51142999  $ & $     0.992187500   $ & $      8  $ & $  1.95530180  $ & $    1,-2, 0,-1, 3 $ \\
        $ 0.50150345  $ & $     0.996250000   $ & $      8  $ & $  1.99400419  $ & $    1,-2, 1,-2, 1 $ \\
        \hline
        \hline
        $ 0.85013713  $ & $     0.999375000   $ & $      10 $ & $   1.1762808  $ & $     1, 1, 0,-1,-1,-1$ \\
        $ 0.82210362  $ & $     0.999531250   $ & $      10 $ & $   1.2163916  $ & $     1, 0, 0, 0,-1,-1$ \\
        $ 0.81274948  $ & $     0.999531250   $ & $      10 $ & $   1.2303914  $ & $     1, 0, 0,-1, 0,-1$ \\
        $ 0.79287619  $ & $     0.999609375   $ & $      10 $ & $   1.2612309  $ & $     1, 0,-1, 0, 0,-1$ \\
        $ 0.77310464  $ & $     0.999609375   $ & $      10 $ & $   1.2934859  $ & $     1, 0,-1,-1, 0, 1$ \\
        $ 0.74776798  $ & $     0.999687500   $ & $      10 $ & $   1.3373132  $ & $     1,-1, 0, 0, 0,-1$ \\
        $ 0.74020322  $ & $     0.999687500   $ & $      10 $ & $   1.3509803  $ & $     1,-1, 0, 0,-1, 1$ \\
        $ 0.72273314  $ & $     0.999687500   $ & $      10 $ & $   1.3836365  $ & $     1,-1, 0,-1, 1,-1$ \\
        $ 0.69881155  $ & $     0.999687500   $ & $      10 $ & $   1.4310009  $ & $     1,-1,-1, 1, 0,-1$ \\
        $ 0.69040602  $ & $     0.999687500   $ & $      10 $ & $   1.4484230  $ & $     1,-2, 2,-2, 1,-1$ \\
        $ 0.67918489  $ & $     0.999687500   $ & $      10 $ & $   1.4723531  $ & $     1,-1, 0, 0,-1, 0$ \\
        $ 0.67585437  $ & $     0.999687500   $ & $      10 $ & $   1.4796086  $ & $     1, 0,-2,-2, 1, 3$ \\
        $ 0.66056171  $ & $     0.999765625   $ & $      10 $ & $   1.5138630  $ & $     1, 0, 0,-1,-2,-1$ \\
        $ 0.65234742  $ & $     0.999765625   $ & $      10 $ & $   1.5329254  $ & $     1,-1,-1, 0, 0, 1$ \\
        $ 0.62865265  $ & $     0.999765625   $ & $      10 $ & $   1.5907035  $ & $     1,-2, 1, 0,-2, 3$ \\
        $ 0.62617211  $ & $     0.999765625   $ & $      10 $ & $   1.5970050  $ & $     1, 0,-1,-1,-1,-1$ \\
        $ 0.62552337  $ & $     0.999687500   $ & $      10 $ & $   1.5986612  $ & $     1,-2, 1,-1, 2,-3$ \\
        $ 0.61538018  $ & $     0.999140625   $ & $      10 $ & $   1.6250117  $ & $     1,-1,-1,-1, 1, 1$ \\
        $ 0.61442227  $ & $     0.999218750   $ & $      10 $ & $   1.6275451  $ & $     1,-2, 0, 2,-1,-1$ \\
        $ 0.60768664  $ & $     0.998984375   $ & $      10 $ & $   1.6455849  $ & $     1,-1,-1, 0, 0, 0$ \\
        $ 0.60335337  $ & $     0.999218750   $ & $      10 $ & $   1.6574035  $ & $     1, 1, 0,-2,-4,-5$ \\
        $ 0.59905359  $ & $     0.999296875   $ & $      10 $ & $   1.6692997  $ & $     1,-1, 0,-1, 0,-2$ \\
        $ 0.59769079  $ & $     0.998906250   $ & $      10 $ & $   1.6731059  $ & $     1,-2, 1,-1, 0, 1$ \\
        $ 0.59165468  $ & $     0.998984375   $ & $      10 $ & $   1.6901750  $ & $     1,-1,-2, 1, 1,-1$ \\
        $ 0.58364838  $ & $     0.998828125   $ & $      10 $ & $   1.7133603  $ & $     1,-2, 1, 0,-2, 2$ \\
        $ 0.57572460  $ & $     0.998984375   $ & $      10 $ & $   1.7369415  $ & $     1,-1, 0,-1,-1,-1$ \\
        $ 0.57309152  $ & $     0.998750000   $ & $      10 $ & $   1.7449219  $ & $     1,-2, 2,-3, 2,-3$ \\
        $ 0.57273303  $ & $     0.998906250   $ & $      10 $ & $   1.7460141  $ & $     1,-1,-1,-1, 0, 2$ \\
        $ 0.57086342  $ & $     0.998515625   $ & $      10 $ & $   1.7517324  $ & $     1,-2, 1,-1, 1,-2$ \\
        $ 0.57041893  $ & $     0.998359375   $ & $      10 $ & $   1.7530974  $ & $     1, 0,-1,-1,-2,-3$ \\
        $ 0.56813176  $ & $     0.998515625   $ & $      10 $ & $   1.7601550  $ & $     1,-2, 1,-2, 2,-1$ \\
        $ 0.56689143  $ & $     0.998750000   $ & $      10 $ & $   1.7640061  $ & $     1,-2, 0, 1, 0,-1$ \\
        $ 0.56595979  $ & $     0.998515625   $ & $      10 $ & $   1.7669099  $ & $     1, 0,-2,-2, 0, 1$ \\
        $ 0.56479027  $ & $     0.998906250   $ & $      10 $ & $   1.7705687  $ & $     1,-3, 4,-5, 5,-5$ \\
        $ 0.55915695  $ & $     0.997500000   $ & $      10 $ & $   1.7884066  $ & $     1,-1, 0,-2, 0,-1$ \\
        $ 0.55562233  $ & $     0.997890625   $ & $      10 $ & $   1.7997836  $ & $     1, 0,-3,-3, 2, 5$ \\
        $ 0.55401132  $ & $     0.996015625   $ & $      10 $ & $   1.8050172  $ & $     1,-2, 0, 1,-1, 1$ \\
        $ 0.54829524  $ & $     0.995312500   $ & $      10 $ & $   1.8238348  $ & $     1, 0,-1,-2,-2,-2$ \\
        $ 0.54790081  $ & $     0.992734375   $ & $      10 $ & $   1.8251478  $ & $     1,-1,-2, 0, 1, 1$ \\
        $ 0.54102298  $ & $     0.994531250   $ & $      10 $ & $   1.8483503  $ & $     1,-1,-1, 0,-1,-1$ \\
        $ 0.53962808  $ & $     0.995625000   $ & $      10 $ & $   1.8531281  $ & $     1,-2, 1,-2, 2,-2$ \\
        $ 0.53846598  $ & $     0.997109375   $ & $      10 $ & $   1.8571275  $ & $     1,-2, 1,-1, 0,-1$ \\
        $ 0.53687119  $ & $     0.995156250   $ & $      10 $ & $   1.8626441  $ & $     1,-3, 3,-1,-3, 5$ \\
        $ 0.53511394  $ & $     0.996015625   $ & $      10 $ & $   1.8687608  $ & $     1,-1, 0,-2, 0,-3$ \\
        $ 0.53312307  $ & $     0.994765625   $ & $      10 $ & $   1.8757394  $ & $     1,-2,-1, 3, 0,-3$ \\
        $ 0.52911051  $ & $     0.993437500   $ & $      10 $ & $   1.8899643  $ & $     1,-1,-1, 0,-1,-2$ \\
        $ 0.52809402  $ & $     0.995234375   $ & $      10 $ & $   1.8936021  $ & $     1,-1,-2, 0, 1, 0$ \\
        $ 0.52724900  $ & $     0.994453125   $ & $      10 $ & $   1.8966370  $ & $     1,-2, 0, 1,-1, 0$ \\
        $ 0.52656371  $ & $     0.993671875   $ & $      10 $ & $   1.8991054  $ & $     1,-2, 0, 0, 1,-1$ \\
        $ 0.52476315  $ & $     0.989687500   $ & $      10 $ & $   1.9056215  $ & $     1, 0,-1,-2,-3,-3$ \\
        $ 0.52402623  $ & $     0.991171875   $ & $      10 $ & $   1.9083013  $ & $     1,-1,-1,-1, 0,-1$ \\
        $ 0.52325199  $ & $     0.988750000   $ & $      10 $ & $   1.9111250  $ & $     1, 0,-2,-2,-1,-1$ \\
        $ 0.52234319  $ & $     0.992578125   $ & $      10 $ & $   1.9144501  $ & $     1,-1, 0,-1,-3,-1$ \\
        $ 0.51919283  $ & $     0.987187500   $ & $      10 $ & $   1.9260666  $ & $     1,-1,-1,-1,-1, 1$ \\
        $ 0.51815962  $ & $     0.991093750   $ & $      10 $ & $   1.9299072  $ & $     1,-1,-1,-2, 1, 0$ \\
        $ 0.51751378  $ & $     0.993515625   $ & $      10 $ & $   1.9323156  $ & $     1,-2, 2,-4, 3,-5$ \\
        $ 0.51734313  $ & $     0.992109375   $ & $      10 $ & $   1.9329530  $ & $     1,-3, 3,-2, 0, 1$ \\
        $ 0.51642814  $ & $     0.993359375   $ & $      10 $ & $   1.9363778  $ & $     1,-2, 0, 0, 0, 1$ \\
        $ 0.51544868  $ & $     0.993125000   $ & $      10 $ & $   1.9400573  $ & $     1,-2, 1,-1,-1, 0$ \\
        $ 0.51282383  $ & $     0.993125000   $ & $      10 $ & $   1.9499873  $ & $     1,-1,-2,-1, 1, 3$ \\
        $ 0.50707588  $ & $     0.992890625   $ & $      10 $ & $   1.9720914  $ & $     1,-2, 0, 0,-1, 3$ \\
        $ 0.50074301  $ & $     0.997890625   $ & $      10 $ & $   1.9970323  $ & $     1,-1,-1,-1,-1,-1$ \\
        $ 0.50036910  $ & $     0.998828125   $ & $      10 $ & $   1.9985246  $ & $     1,-2, 1,-2, 1,-2$ \\
        \hline
        \end{longtable}

\subsection{Small Salem numbers}
\label{ap:smallsalem}
    A lower bound $\alpha < D_2(\mu)$ for the correlation dimension of the Bernoulli convolution
    measure corresponding Salem parameter values of degree up to~$10$. 
    Computed using~$300$ iterations of the diffusion operator and~$6\cdot 10^6$
    partition intervals.

    \begin{longtable}{|c|c|c|c|}
\hline
$\lambda$ & $\alpha$ & degree & $\beta=\lambda^{-1}$        \\
\hline
\endhead
        \hline
        \endfoot
        \endlastfoot
$  0.841490073675    $   &    $   0.999453125000  $  & $ 18  $   &   $ 1.188368147508  $  \\
$  0.833314914305    $   &    $   0.999453125000  $  & $ 14  $   &   $ 1.200026523987  $  \\
$  0.831520104180    $   &    $   0.999453125000  $  & $ 14  $   &   $ 1.202616743688  $  \\
$  0.819859718384    $   &    $   0.999531250000  $  & $ 18  $   &   $ 1.219720859040  $  \\
$  0.811284283822    $   &    $   0.999531250000  $  & $ 20  $   &   $ 1.232613548593  $  \\
$  0.809281107406    $   &    $   0.999531250000  $  & $ 22  $   &   $ 1.235664580389  $  \\
$  0.808853430235    $   &    $   0.999531250000  $  & $ 16  $   &   $ 1.236317931803  $  \\
$  0.808077659864    $   &    $   0.999531250000  $  & $ 26  $   &   $ 1.237504821217  $  \\
$  0.805979449568    $   &    $   0.999531250000  $  & $ 12  $   &   $ 1.240726423652  $  \\
$  0.798227336699    $   &    $   0.999531250000  $  & $ 18  $   &   $ 1.252775937410  $  \\
$  0.797874048512    $   &    $   0.999531250000  $  & $ 20  $   &   $ 1.253330650201  $  \\
$  0.796753378647    $   &    $   0.999609375000  $  & $ 14  $   &   $ 1.255093516763  $  \\
$  0.796038178870    $   &    $   0.999609375000  $  & $ 18  $   &   $ 1.256221154391  $  \\
$  0.793585580847    $   &    $   0.999609375000  $  & $ 24  $   &   $ 1.260103540354  $  \\
$  0.793471798443    $   &    $   0.999609375000  $  & $ 22  $   &   $ 1.260284236896  $  \\
$  0.791741728445    $   &    $   0.999609375000  $  & $ 26  $   &   $ 1.263038139930   $  \\
$  0.789081359693    $   &    $   0.999609375000  $  & $ 14  $   &   $ 1.267296442523  $  \\
$  0.783220488506    $   &    $   0.999609375000  $  & $ 22  $   &   $ 1.276779674019   $  \\
$  0.780219031051    $   &    $   0.999609375000  $  & $ 26  $   &   $ 1.281691371528   $  \\
$  0.779729794543    $   &    $   0.999609375000  $  & $ 20  $   &   $ 1.282495560639  $  \\
$  0.778442407020    $   &    $   0.999609375000  $  & $ 18  $   &   $ 1.284616550925  $  \\
$  0.778363474601    $   &    $   0.999609375000  $  & $ 26  $   &   $ 1.284746821544  $  \\
$  0.778149945665    $   &    $   0.999609375000  $  & $ 30  $   &   $ 1.285099363651   $  \\
$  0.778136529751    $   &    $   0.999609375000  $  & $ 30  $   &   $ 1.285121520153  $  \\
$  0.778097688729    $   &    $   0.999609375000  $  & $ 30  $   &   $ 1.285185670752  $  \\
$  0.778090995075    $   &    $   0.999609375000  $  & $ 26  $   &   $ 1.285196726769  $  \\
$  0.778089510311    $   &    $   0.999609375000  $  & $ 44  $   &   $ 1.285199179205  $  \\
$  0.778067560084    $   &    $   0.999609375000  $  & $ 30  $   &   $ 1.285235436228  $  \\
$  0.777962461454    $   &    $   0.999609375000  $  & $ 34  $   &   $ 1.285409064765  $  \\
$  0.777365621302    $   &    $   0.999609375000  $  & $ 18  $   &   $ 1.286395966836  $  \\
$  0.777163708407    $   &    $   0.999609375000  $  & $ 26  $   &   $ 1.286730182048  $  \\
$  0.774148742227    $   &    $   0.999609375000  $  & $ 24  $   &   $ 1.291741425714  $  \\
$  0.773970381657    $   &    $   0.999609375000  $  & $ 20  $   &   $ 1.292039106017  $  \\
$  0.773743085596    $   &    $   0.999609375000  $  & $ 40  $   &   $ 1.292418657582  $  \\
$  0.773454591798    $   &    $   0.999609375000  $  & $ 46  $   &   $ 1.292900721780  $  \\
$  0.771798261859    $   &    $   0.999609375000  $  & $ 18  $   &   $ 1.295675371944  $  \\
$  0.771479537372    $   &    $   0.999609375000  $  & $ 34  $   &   $ 1.296210659593  $  \\
$  0.771354149852    $   &    $   0.999609375000  $  & $ 22  $   &   $ 1.296421365194  $  \\
$  0.771116223305    $   &    $   0.999609375000  $  & $ 28  $   &   $ 1.296821373714  $  \\
$  0.770160984197    $   &    $   0.999609375000  $  & $ 36  $   &   $ 1.298429835475  $  \\
$  0.769381763673    $   &    $   0.999609375000  $  & $ 26  $   &   $ 1.299744869472  $  \\
\hline
\end{longtable} 

\pagebreak

  {\footnotesize
  \noindent
  \textsc{V. Kleptsyn, Univ Rennes, CNRS, IRMAR - UMR 6625, F-35000 Rennes,
  France.  } \\
  \noindent
  \textit{E-mail address}: \texttt{victor.kleptsyn@univ-rennes1.fr}
  \par
  \addvspace{\medskipamount}
  \noindent
  \textsc{M. Pollicott, Department of Mathematics, Warwick University, Coventry,
  CV4 7AL, UK.} \\
  \noindent
  \textit{E-mail address}: \texttt{masdbl@warwick.ac.uk}
  \par
  \addvspace{\medskipamount}
  \noindent
  \textsc{P. Vytnova, Department of Mathematics, Warwick University, Coventry,
  CV4 7AL, UK} \\
  \noindent
  \textit{E-mail address}: \texttt{P.Vytnova@warwick.ac.uk}
}

\end{document}